      \theoremstyle{plain}
      \newtheorem{theorem}{Theorem}[section]
      \newtheorem{lemma}[theorem]{Lemma}
      \newtheorem{corollary}[theorem]{Corollary}
      \newtheorem{proposition}[theorem]{Proposition}
      \theoremstyle{definition}
      \newtheorem{definition}[theorem]{Definition}
      \theoremstyle{remark}
      \newtheorem{remark}[theorem]{Remark}
\newcommand{\su}{{\mathrm{SU}(n,1)}}
\newcommand{\SL}{{\mathrm{SL}(2n+2,\mathbb R)}}
\newcommand{\Sp}{\mathrm{Sp}(n,1)}
\newcommand{\spone}{\mathrm{Sp}(1,1)}
\newcommand{\suone}{\mathrm{SU}(1,1)}
\newcommand{\hc}{\mathbf{H}^n_{\mathbb C}}
\newcommand{\hH}{\mathbf{H}^n_{\mathbb H}}
\newcommand{\hck}{\mathbf{H}^m_{\mathbb C}}
\newcommand{\hrk}{\mathbf{H}^m_{\mathbb R}}
\def\C{\mathbb C}
\def\R{\mathbb R}
\def\H{\mathbb H}
\def\P{\mathbb P}
\def\<{\langle}
\def\>{\rangle}
\def\0{\mathbf{0}}
\def\hh#1{{{\bf H}^{#1}_{\H}}}
\def\[#1\]{\begin{eqnarray*}#1\end{eqnarray*}}
      \def\@setcopyright{}
      \def\serieslogo@{}
\begin{document}

%



   \author{Joonhyung Kim}
   \address{Department of Mathematics Education, Hannam University, 70 Hannam-ro, Daedeok-gu, Daejeon 306-791, Republic of Korea}
   \email{calvary@hnu.kr}

   \author{Sungwoon Kim}
   \address{Center for Mathematical Challenges,
   KIAS, Hoegiro 85, Dongdaemun-gu,
   Seoul, 130-722, Republic of Korea}
   \email{sungwoon@kias.re.kr}






   \title[Real trace fields]{Complex and Quaternionic hyperbolic Kleinian groups with real trace fields}


\begin{abstract}
Let $\Gamma$ be a nonelementary discrete subgroup of $\su$ or $\Sp$. We show that if the trace field of $\Gamma$ is contained in $\mathbb R$, $\Gamma$ preserves a totally geodesic submanifold of constant negative sectional curvature.
Furthermore if $\Gamma$ is irreducible, $\Gamma$ is a Zariski dense irreducible discrete subgroup of $\mathrm{SO}(n,1)$ up to conjugation.
This is an analog of a theorem of Maskit for general semisimple Lie groups of rank $1$.
\end{abstract}

\footnotetext[1]{2000 {\sl{Mathematics Subject Classification.} 22E40, 30F40, 57S30}
}

\footnotetext[2]{{\sl{Key words and phrases.} Complex hyperbolic Kleinian group, Quaternionic hyperbolic Kleinian group, Trace field}
}

\footnotetext[3]{The first author was supported by the Basic Science Research Program through the National Research Foundation of Korea (NRF) funded by the Ministry of Education, Science and Technology (NRF-2014R1A1A1004122)}

\footnotetext[4]{The second author was supported by the Basic Science Research Program through the National Research Foundation of Korea (NRF) funded by the Ministry of Education, Science and Technology (NRF-2012R1A1A2040663)}


   \keywords{}

   \thanks{}
   \thanks{}

   \dedicatory{}

   \date{}


   \maketitle



\section{Introduction}

The main algebraic objects associated to a Kleinian group $\Gamma$, that is a discrete subgroup of $\mathrm{PSL}(2,\mathbb C)$, are its invariant trace field and invariant quaternion algebra. They have played an important role in studying the arithmetic aspects of Kleinian groups, especially of finite-covolume Kleinian groups. For example, it turned out that the invariant trace field of a finite-covolume Kleinian group is a number field i.e., a finite extension of $\mathbb Q$, and the matrix entries of the elements of a finite-covolume Kleinian group are in its trace field.
The trace field of $\Gamma$ is not a commensurability invariant but its invariant trace field and invariant quaternion algebra are commensurability invariants. Note that an arithmetic Kleinian group is determined up to commensurability by its invariant trace field and invariant quaternion algebra (see \cite{NR}).


McReynolds \cite{Mc} introduced the invariant trace field and the invariant algebra for subgroups of $\mathrm{PSU}(n,1)$ in a similar way as the $\mathrm{PSL}(2,\mathbb C)$ case. Moreover he proved that they are commensurability invariants as Kleinian groups. A central theme in this theory is to study the (invariant) trace field and invariant algebra associated to a subgroup of $\mathrm{PSU}(n,1)$. 
However still very little is known about these algebraic invariants associated to complex hyperbolic Kleinian groups.
In particular, Cunha-Gusevskii \cite{CG} and Genzmer \cite{Ge} studied whether a discrete subgroup of $\mathrm{SU}(2,1)$ can be realized over its trace field. 
One aim of this paper is to understand the algebraic and geometric features of discrete subgroups of $\su$ or $\Sp$ with real trace fields.


Maskit \cite[Theorem V.G.18]{Mas98} characterized nonelementary discrete subgroups of $\mathrm{SL}(2,\mathbb C)$ with real trace fields. More precisely, if the trace field of a nonelementary discrete subgroup $\Gamma$ of $\mathrm{SL}(2,\mathbb C)$ is real, then $\Gamma$ is conjugate to a subgroup of $\mathrm{SL}(2,\mathbb R)$. In other words, $\Gamma$ is realized over the real field $\mathbb R$ up to conjugation.
The same question was naturally raised as to which discrete subgroups of $\su$ have real trace fields.
In fact, an answer for the question has been given in low dimensional case.
Cunha-Gusevskii \cite{CG} and Fu-Li-Wang \cite{FLW12} independently showed that a nonelementary discrete subgroup of $\mathrm{SU}(2,1)$ with real trace field is conjugate to a subgroup of $\mathrm{SO}(2,1)$ or $\mathrm{S}(\mathrm{U}(1)\times \mathrm{U}(1,1))$. Kim-Kim \cite{KK14} also proved that a nonelementary discrete subgroup of $\mathrm{SU}(3,1)$ with real trace field is conjugate to a subgroup of $\mathrm{SO}(3,1)$ or $\mathrm{SU}(2)\times \mathrm{SU}(1,1)$. Note that it seems not easy to extend their approaches to the general case since 
all proofs in \cite{CG,FLW12,KK14} have been actually based on matrix computations.

In the $\su$ case, the trace field of a discrete subgroup being real does not imply that the discrete subgroup is realized over $\mathbb R$ up to conjugation as the $\mathrm{SL}(2,\mathbb C)$ case. Here is a counterexample.
Let $F_2$ be a free group with two generators. Let us take a discrete faithful representation $\rho_1 : F_2 \rightarrow \suone$ corresponding to a complete hyperbolic structure on a punctured torus and any representation $\rho_2 : F_2 \rightarrow \mathrm{SU}(2)$. Define a representation $\rho : F_2 \rightarrow \mathrm{SU}(3,1)$ by $\rho =\rho_1 \oplus \rho_2$.
Then it is easy to check that $\rho(F_2)$ is a nonelementary discrete subgroup and moreover the trace field of $\rho(F_2)$ is real since every element of $\suone$ and $\mathrm{SU}(2)$ has real trace. However one can easily make $\rho(F_2)$ not to be realized over $\mathbb R$ up to conjugation by choosing proper representations $\rho_1$ and $\rho_2$. In fact since the choice of $\rho_2$ is completely free, one can construct many discrete subgroups of $\su$ in this way which have real trace fields but are not realized over $\mathbb R$ up to conjugation. For this reason, in the $\su$ case, the trace field being real dose not seem to encode algebraic properties of discrete subgroups. On the other hand, in a geometric point of view, all the previous results so far give a consistent geometric feature telling that a discrete subgroup with real trace field preserves a totally geodesic submanifold of constant negative sectional curvature like Fuchsian groups.
In the general setting of $\su$, we figure it out the geometric feature of discrete subgroups with real trace fields as follows.

\begin{theorem}\label{thm:1.1}
Let $\Gamma$ be a nonelementary discrete subgroup of $\su$. If the trace field of $\Gamma$ is real, then $\Gamma$ preserves a totally geodesic submanifold of constant negative sectional curvature in $\mathbf H_{\mathbb C}^n$.
\end{theorem}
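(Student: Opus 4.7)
The plan is to locate a minimal $\Gamma$-invariant totally geodesic submanifold of $\mathbf{H}^n_{\mathbb C}$ and then exploit the real-trace hypothesis to force it to be real hyperbolic. Since $\Gamma$ is nonelementary, its limit set $\Lambda(\Gamma)\subset\partial\mathbf{H}^n_{\mathbb C}$ has more than two points. Because intersections of totally geodesic submanifolds of $\mathbf{H}^n_{\mathbb C}$ are totally geodesic, there is a smallest totally geodesic submanifold $Y\subset\mathbf{H}^n_{\mathbb C}$ whose closure contains $\Lambda(\Gamma)$, and $Y$ is automatically $\Gamma$-invariant. By the classification of totally geodesic submanifolds, $Y$ is either a totally real subspace $\mathbf{H}^m_{\mathbb R}$---in which case the conclusion is immediate---or a complex hyperbolic subspace $\mathbf{H}^k_{\mathbb C}$; since $\mathbf{H}^1_{\mathbb C}=\mathbf{H}^2_{\mathbb R}$ already has constant negative curvature, it remains only to derive a contradiction from the assumption $Y=\mathbf{H}^k_{\mathbb C}$ with $k\ge 2$.

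Under this assumption, let $V_0\subset\mathbb C^{n+1}$ be the $\Gamma$-invariant complex subspace of Hermitian signature $(k,1)$ corresponding to $Y$, let $H_0=H|_{V_0}$, and denote by $\rho:\Gamma\hookrightarrow\mathrm{SU}(n,1)$ the standard inclusion and by $\rho_0$, $\rho_\perp$ its restrictions to $V_0$ and $V_0^\perp$. First I would show that $\Gamma$ acts irreducibly on $V_0$. For any nonzero proper $\Gamma$-invariant subspace $W\subset V_0$, the radical $\ker(H_0|_W)$ is $\Gamma$-invariant and totally isotropic, so it must be trivial (an invariant isotropic line would give a fixed point of $\Gamma$ on $\partial\mathbf{H}^n_{\mathbb C}$, contradicting nonelementary). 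Then $H_0|_W$ is non-degenerate, $V_0=W\oplus W^\perp$ orthogonally, and after swapping if necessary $W$ has signature $(a,1)$ with $a<k$, producing a $\Gamma$-invariant $\mathbf{H}^a_{\mathbb C}\subsetneq Y$ whose closure contains $\Lambda(\Gamma)$ and contradicting the minimality of $Y$. Since $V_0^\perp$ is then positive definite, $\rho_\perp$ is unitary and decomposes completely into irreducibles.

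The key step is to prove $\rho_0\cong\bar\rho_0$ as abstract $\Gamma$-representations. From $\mathrm{tr}(\rho)\in\mathbb R$ we get $\mathrm{tr}(\rho)=\mathrm{tr}(\bar\rho)$, so by the linear independence of the characters of pairwise non-isomorphic irreducibles, $\rho$ and $\bar\rho$ have the same multiset of irreducible constituents. In particular $\bar\rho_0$ appears in $\rho$: either $\rho_0\cong\bar\rho_0$, or $\bar\rho_0$ is equivalent to a subrepresentation of $V_0^\perp$. The second alternative is excluded by a signature argument---the unique-up-to-scalar $\Gamma$-invariant Hermitian form on the abstract representation $\bar\rho_0$ is the conjugate of $H_0$ and has signature $(k,1)$, whereas any invariant form inherited from a subspace of $V_0^\perp$ is positive definite, and these are incompatible for $k\ge 1$. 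Hence $\rho_0\cong\bar\rho_0$, equivalently $\mathrm{tr}(\rho_0(\gamma))\in\mathbb R$ for every $\gamma\in\Gamma$.

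Finally, $\rho_0\cong\bar\rho_0$ produces an antilinear intertwiner $J_0:V_0\to V_0$ with $J_0^2=cI$ ($c=\pm 1$ after rescaling), and Schur applied to the $\Gamma$-invariant Hermitian form $(v,w)\mapsto H_0(J_0 w, J_0 v)$ gives a compatibility $H_0(J_0 v, J_0 w)=\lambda_0\overline{H_0(v,w)}$ with $\lambda_0=\pm 1$. For $k\ge 2$, only $c=\lambda_0=1$ is consistent with the signature $(k,1)$ of $H_0$: the other three combinations either force the real form $V_0^{J_0}$ to be totally $H_0$-isotropic of $\mathbb R$-dimension $k+1$ (exceeding the maximum $2$ allowed in signature $(k,1)$), require $J_0$ to send the $H_0$-positive cone of $\mathbb R$-dimension $2k$ into the negative cone of $\mathbb R$-dimension $2$, or produce a quaternionic Hermitian structure whose underlying complex signature must be of the form $(2p,2q)$, incompatible with $(k,1)$. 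The surviving case yields a $\Gamma$-invariant real form $V_0^{J_0}$ on which $H_0$ restricts to a real symmetric bilinear form of signature $(k,1)$, hence a $\Gamma$-invariant $\mathbf{H}^k_{\mathbb R}\subsetneq\mathbf{H}^k_{\mathbb C}=Y$, contradicting the minimality of $Y$. The main obstacle is the key step, where the linear independence of characters and the signature mismatch between $V_0$ and $V_0^\perp$ together convert the real-trace hypothesis into structural information about $\rho_0$.
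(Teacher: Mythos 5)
Your proof is correct, but it takes a genuinely different route from the paper's. The paper passes to the Zariski closure $\overline{\Gamma}$: it shows the set of real-trace elements is Zariski closed (so $\overline{\Gamma}$ still has real traces), invokes the structure theorem $\overline{\Gamma}^\circ=(LT)\ltimes U$ for almost Zariski closed groups, kills $U$, classifies the noncompact simple factor among stabilizers of totally geodesic submanifolds (ruling out $\mathrm{SU}(m,1)$ for $m\ge 2$ by the trace condition), and finally uses normality of $\overline{\Gamma}^\circ$ in $\overline{\Gamma}$ to push the invariant submanifold up to $\Gamma$. You instead work with $\Gamma$ itself: a minimal invariant totally geodesic submanifold reduces you to $Y=\mathbf{H}^k_{\mathbb C}$ with $\rho_0$ irreducible, Brauer--Nesbitt together with a signature comparison between $V_0$ and its positive-definite complement converts $\mathrm{tr}\,\rho\subset\mathbb R$ into $\rho_0\cong\bar\rho_0$, and the real/quaternionic (Frobenius--Schur) trichotomy run against the signature $(k,1)$ extracts an invariant totally real form, contradicting minimality. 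Both arguments ultimately rest on the same two facts --- a nonelementary discrete group fixes no point of $\overline{\mathbf{H}^n_{\mathbb C}}$, and $\mathrm{SU}(k,1)$-behaviour is incompatible with real traces once $k\ge 2$ --- but yours trades algebraic-group structure theory for finite-dimensional character theory. The paper's method buys portability: the Zariski-closure reduction is reused verbatim for $\Sp$, where the trace is not a class function and your character-theoretic step has no direct analogue. Your method buys explicitness: it produces the invariant real form $V_0^{J_0}$ directly (hence the conjugation into $\mathrm S(\mathrm U(n-m)\oplus\mathrm O(m,1))$ or $\mathrm{SU}(n-1)\oplus\mathrm{SU}(1,1)$ of the paper's Theorem 3.8) and makes transparent why only $k=1$ survives in the complex case. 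If you write this up, two points deserve a line each: that the intersection of all totally geodesic submanifolds whose closures contain $\Lambda(\Gamma)$ is nonempty and again has $\Lambda(\Gamma)$ in its closure (it contains the geodesic joining any two limit points); and the normalization $\lambda_0^2=1$, obtained by applying $H_0(J_0v,J_0w)=\lambda_0\overline{H_0(v,w)}$ twice and using $J_0^2=\pm I$. (Minor notational clash: your $V_0$ is the paper's notation for the null cone.)
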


Assuming that the symmetric metric on the complex hyperbolic $n$-space $\mathbf H_{\mathbb C}^n$ is normalized so that its sectional curvature lies between $-4$ and $-1$, it is well known that a totally geodesic submanifold of constant negative sectional curvature is isometric to either a real hyperbolic space of constant sectional curvature $-1$ or a real hyperbolic $2$-plane of constant sectional curvature $-4$. Note that the first one is isometric to $\mathbf H_{\mathbb R}^k$ for some $2\leq k \leq n$ and the second one is isometric to $\mathbf H_{\mathbb C}^1$. Theorem \ref{thm:1.1} is a generalized version of the theorem of Maskit \cite[Theorem V.G.18]{Mas98} for $\su$ in geometric aspect.

We remark here that Fu-Xie \cite{FX} gave an sufficient condition for discrete subgroups of $\su$ to preserve a $2$-dimensional totally geodesic submanifold in $\mathbf H_{\mathbb C}^n$. More precisely, they proved that if $\Gamma$ is a nonelementary discrete subgroup of $\su$ and all eigenvalues are real for every loxodromic element of $\Gamma$, then $\Gamma$ preserves a $2$-dimensional totally geodesic submanifold in $\mathbf H_{\mathbb C}^n$. However the sufficient condition given by them is too sufficient in a sense that there are too many nonelementary discrete subgroups $\Gamma$ of $\su$ so that $\Gamma$
preserves a $2$-dimensional totally geodesic submanifold in $\mathbf H_{\mathbb C}^n$ but dose not satisfy that
all eigenvalues are real for every loxodromic element of $\Gamma$.

In this paper, we also study discrete subgroups of $\Sp$ with real trace fields as well as $\su$. Since the division ring $\mathbb H$ of quaternions is not commutative, the situation is quite different from the $\su$ case. For instance the usual definition of trace is not invariant under conjugation in $\Sp$. Nonetheless it turns out that the trace field of subgroups of $\Sp$ provides a useful tool in characterizing discrete subgroups of $\Sp$ preserving totally geodesic submanifolds of constant negative sectional curvature in $\mathbf H_{\mathbb H}^n$ which is not isometric to $\mathbf H_{\mathbb H}^1$.

Let $\Gamma$ be a discrete subgroup of $\Sp$. Following the definition of the trace field as usual, one can obtain 
the skew field generated by the traces of all the elements of $\Gamma$ over $\mathbb Q$. We call this skew field \emph{the trace field} of $\Gamma$.
Note that the trace field of $\Gamma$ may be not commutative and is not invariant under conjugation in $\Sp$. 
Kim \cite{Kim13} showed that a nonelementary discrete subgroup of $\mathrm{Sp}(2,1)$ with real trace field preserves a copy of $\mathbf H_{\mathbb R}^2$ or $\mathbf H_{\mathbb C}^1$ in $\mathbf H_{\mathbb H}^2$.
In accordance with his result, we expect that the trace field of $\Gamma$ being real gives a specific geoemtric property on $\Gamma$ like the $\su$ case and we finally obtain an analogous theorem of Theorem \ref{thm:1.1} for $\Sp$.

\begin{theorem}\label{thm:1.2}
Let $\Gamma$ be a nonelementary discrete subgroup of $\Sp$. If the trace field of $\Gamma$ is real, then $\Gamma$ preserves a totally geodesic submanifold of constant negative sectional curvature  in $\mathbf H_{\mathbb H}^n$ which is not isometric to $\mathbf H_{\mathbb H}^1$.
\end{theorem}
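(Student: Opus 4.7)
My plan is to mirror the proof of Theorem \ref{thm:1.1}, combining a Zariski-closure argument with a classification of real algebraic subgroups of $\Sp$ whose elements all have real quaternionic trace. First, form the Zariski closure $G$ of $\Gamma$ inside $\Sp$, regarded as a real algebraic group. The three maps that read off the $\mathbf i$, $\mathbf j$, and $\mathbf k$ components of the trace of a matrix are real polynomial functions of its entries, so ``trace is real'' is a Zariski-closed condition on $\Sp$. Since it holds on the generators $\Gamma$, it extends to every element of $G$. Nonelementarity also passes to the Zariski closure, so the identity component $G^0$ is a non-trivial connected real algebraic subgroup of $\Sp$ acting nonelementarily on $\hH$.

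The heart of the proof is to classify, up to conjugation in $\Sp$, the connected real algebraic subgroups $L\subseteq\Sp$ satisfying (i) every $g\in L$ has real quaternionic trace and (ii) $L$ acts nonelementarily on $\hH$. I would argue that any such $L$ must stabilise either a real subspace $V_{\R}\subset\H^{n+1}$ of signature $(k,1)$ for some $2\le k\le n$, or a complex $2$-plane $V_{\C}\subset\H^{n+1}$ of complex signature $(1,1)$. The corresponding stabilisers in $\Sp$ take the form $\mathrm{O}(k,1)\cdot K$ and $\suone\cdot K'$ respectively for appropriate compact subgroups $K,K'$, and each preserves exactly the totally geodesic $\mathbf H^k_{\R}$ or $\mathbf H^1_{\C}$ cut out by the invariant subspace. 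Both of these submanifolds have constant negative sectional curvature, and neither is isometric to $\mathbf H^1_{\H}$.

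To complete the classification I would rule out the case that $L$ preserves only a quaternionic line, which would force $L\subseteq\spone\cdot\mathrm{Sp}(n-1)\cdot\mathrm{Sp}(1)$ and produce $\mathbf H^1_{\H}$. Condition (i) does the work here: explicit loxodromic elements in $\spone$ have non-real quaternionic trace, contradicting the real-trace hypothesis inherited by $L$. Similarly one excludes the presence of a $\mathrm{Sp}(k,1)$-factor with $k\ge 1$ or a $\mathrm{SU}(k,1)$-factor with $k\ge 2$, since neither has all real traces. Once the classification is in hand the conclusion is immediate: $\Gamma\subseteq G\subseteq L$ preserves the required totally geodesic submanifold.

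The main obstacle is the classification step itself. Because the quaternionic trace is \emph{not} conjugation invariant in $\Sp$, one cannot import the commutative trace-field argument that works over $\mathbb C$: an individual element cannot simply be diagonalised to extract an invariant subspace. Instead I would work with the Cartan decomposition of $L$ and use nonelementarity to manufacture a rich supply of elements whose joint real-trace conditions force the invariant $\R$- or $\C$-subspace structure described above. This quaternionic classification is the technical core of Theorem \ref{thm:1.2}, and is the natural counterpart of the classification that drives the proof of Theorem \ref{thm:1.1}.
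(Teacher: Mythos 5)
Your overall architecture matches the paper's: pass to the Zariski closure, observe that the real-trace condition is Zariski closed (via the $\mathbf i,\mathbf j,\mathbf k$ components of the trace), reduce by the structure theory of algebraic groups to a connected noncompact rank-one factor $L$, and then classify which such $L$ can have all traces real. However, there is a genuine gap at exactly the step you yourself flag as ``the technical core,'' and the one concrete argument you offer there is invalid. You propose to rule out the quaternionic and higher complex cases by noting that ``explicit loxodromic elements in $\spone$ have non-real quaternionic trace, contradicting the real-trace hypothesis inherited by $L$.'' But $L$ is only \emph{conjugate} to $I_{n-m}\oplus\mathrm{Sp}(m,1)$ (or $I_{n-m}\oplus\mathrm{SU}(m,1)$), and the quaternionic trace is not a conjugation invariant in $\Sp$: $tr(gAg^{-1})\neq tr(A)$ in general. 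So an element of the standard copy with non-real trace tells you nothing about the traces of the corresponding elements of $L=gGg^{-1}$. The paper makes this point explicitly and even exhibits a $g\in\spone$ for which $g\,\suone\,g^{-1}$ has non-real traces although $\suone$ itself has all traces real; the phenomenon could a priori go the other way as well. Your closing paragraph acknowledges the non-invariance and gestures at ``a rich supply of elements whose joint real-trace conditions force the invariant subspace structure,'' but no such argument is actually supplied, so the exclusion step is missing.

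What the paper does at this point is concrete and quantifier-complete: it produces specific elements whose conjugates by \emph{every} $g\in\Sp$ have non-real trace. For the quaternionic case it uses the diagonal element $d_n=\mathrm{diag}(1,\ldots,1,i)$ and shows $tr(gd_ng^{-1})\notin\R$ for all $g\in\Sp$, by reducing to the non-solvability over $\H^{n+1}$ of the system $\sum_m x_m i\bar x_m - x_{n+1}i\bar x_{n+1}=0$, $\sum_m|x_m|^2-|x_{n+1}|^2=-1$ (a signature obstruction). For $\mathrm{SU}(m,1)$ with $m\geq 2$ it uses three diagonal elements $c_1,c_2,c_3$ of $\mathrm{SU}(2,1)$ whose joint real-trace conditions again reduce to the same insoluble system. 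Only $\mathrm{SO}(m,1)$ ($m\geq2$) and $I_{n-1}\oplus\suone$ survive, and the theorem follows. To repair your proof you would need to supply these (or equivalent) uniform-in-$g$ computations; without them the classification, and hence the theorem, is not established.
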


Both Theorem \ref{thm:1.1} and \ref{thm:1.2} imply that if a nonelementary discrete subgroup $\Gamma$ of $\su$ or $\Sp$ has real trace field, $\Gamma$ acts on a real hyperbolic space of dimension at least $2$ and thus it is regarded as a nonelementary discrete subgroup of $\mathrm{SO}(n,1)$. Hence we have


\begin{theorem}\label{thm:1.3}
Let $\Gamma$ be a nonelementary torsion-free discrete subgroup of $\su$ $($resp. $\Sp)$ for $n\geq 2$. Then the followings are equivalent.
\begin{itemize}
\item[(i)] There exists a discrete faithful representation $\rho : \Gamma \rightarrow \Sp$ such that the trace field of its image group is real.
\item[(ii)] There exists a discrete faithful represenation $\rho : \Gamma \rightarrow \mathrm{SO}(n,1)$ $($resp. $\rho : \Gamma \rightarrow \mathrm{O}(n,1))$
\end{itemize}
\end{theorem}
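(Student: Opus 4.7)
The proof splits into two directions, unified by Theorem~\ref{thm:1.2}.

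For $(ii) \Rightarrow (i)$, I would start from a discrete faithful representation $\rho : \Gamma \to \mathrm{O}(n,1)$ (resp.\ $\mathrm{SO}(n,1)$) and post-compose with the closed embedding $\mathrm{O}(n,1) \hookrightarrow \Sp$ arising from the totally geodesic inclusion $\mathbf H^n_{\mathbb R} \hookrightarrow \hh{n}$. This inclusion identifies a real orthogonal matrix with itself viewed as a quaternionic matrix, so the composite representation remains discrete and faithful. Moreover every image matrix is real, so its quaternionic trace (the sum of the diagonal entries) is real, and the trace field of the image group is contained in $\mathbb R$.

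For $(i) \Rightarrow (ii)$, given $\rho : \Gamma \to \Sp$ discrete and faithful with real trace field, I would apply Theorem~\ref{thm:1.2} to $\rho(\Gamma)$. Since $\rho$ is faithful, $\rho(\Gamma)$ is torsion-free and nonelementary, and it is discrete in $\Sp$ by hypothesis. Theorem~\ref{thm:1.2} then produces a totally geodesic submanifold $N \subset \hh{n}$ of constant negative sectional curvature not isometric to $\hh{1}$. The classification of such submanifolds leaves only the totally real copies $\mathbf H^k_{\mathbb R}$ and the complex geodesic $\mathbf H^1_{\mathbb C} \cong \mathbf H^2_{\mathbb R}$; since a nonelementary group cannot preserve a copy of $\mathbf H^1_{\mathbb R}$, we conclude that $N$ is isometric to $\mathbf H^k_{\mathbb R}$ for some $2 \leq k \leq n$. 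The induced action yields a homomorphism $\rho(\Gamma) \to \mathrm{Isom}(N) \cong \mathrm{O}(k,1)$, and composing with the standard inclusion $\mathrm{O}(k,1) \hookrightarrow \mathrm{O}(n,1)$ gives the desired representation $\Gamma \to \mathrm{O}(n,1)$.

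What remains is to verify discreteness and faithfulness of this induced representation. The pointwise stabilizer $K$ of $N$ in $\Sp$ is contained in a maximal compact isotropy subgroup of $\Sp$, hence is compact; its intersection with the discrete group $\rho(\Gamma)$ is therefore finite, and torsion-freeness of $\Gamma$ forces this intersection to be trivial, yielding faithfulness. Discreteness of the image follows from a standard properness argument using compactness of $K$. The main subtlety I anticipate is the distinction between $\mathrm{O}(n,1)$ in the $\Sp$ case and $\mathrm{SO}(n,1)$ in the $\su$ case; this should be resolvable by noting that $\su$ is connected and acts on any totally real subspace preserving orientation, so in the $\su$ version the image of the induced homomorphism is pinned inside the identity component $\mathrm{SO}(n,1)$.
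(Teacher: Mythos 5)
Your main line of argument coincides with the paper's: Theorem \ref{L2} (the algebraic form of Theorem \ref{thm:1.2}) puts $\rho(\Gamma)$, up to conjugacy, inside $\mathrm{Sp}(n-m)\oplus\mathrm{O}(m,1)$ or $\mathrm{Sp}(n-1)\oplus\mathrm{SU}(1,1)$, and the paper then projects onto the noncompact factor, using compactness of $\mathrm{Sp}(n-m)$ exactly as you use compactness of the pointwise stabilizer of $N$ to obtain discreteness, and torsion-freeness to obtain faithfulness. Your restriction-to-$N$ formulation is the same argument in geometric clothing; the $\Sp$ case of (i)$\Rightarrow$(ii) and the direction (ii)$\Rightarrow$(i) are fine.

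The flaw is in your final step, the passage from $\mathrm{O}(k,1)$ to $\mathrm{SO}(n,1)$ in the $\su$ case. First, the representation in (i) takes values in $\Sp$ even when $\Gamma$ sits in $\su$, so connectedness of $\su$ says nothing about how $\rho(\Gamma)$ acts on $N\subset \mathbf H^n_{\mathbb H}$. Second, the claim that $\su$ acts on totally real subspaces preserving orientation is false: for $g\in\mathrm{O}(m,1)$ with $\det g=-1$ and a unit scalar $\lambda$ with $\lambda^{n+1}\det g=1$, the element $\lambda(I_{n-m}\oplus g)$ lies in $\su$, stabilizes $\mathbf H^m_{\mathbb R}$, and acts on it exactly as $g$ does --- this is precisely why Theorem \ref{realtrace} produces the factor $\mathrm{O}(m,1)$ rather than $\mathrm{SO}(m,1)$. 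The correct repair when $k<n$ uses the spare coordinates: replace $\sigma=\rho|_N$ by $\gamma\mapsto \mathrm{diag}(\det\sigma(\gamma),1,\dots,1)\oplus\sigma(\gamma)$, which lands in $\mathrm{SO}(n,1)$ and remains discrete and faithful because the twist takes values in a finite group. When $k=n$ and $n$ is even one can multiply by the scalar $\det\sigma(\gamma)$; when $k=n$ and $n$ is odd no scalar twist corrects the determinant, and this residual case needs a separate argument (the paper's one-line reduction via $\su\cap\mathrm{O}(n,1)=\mathrm{SO}(n,1)$ does not address it either). You should replace the orientation argument by the determinant twist and flag the remaining case explicitly.
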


Let $\mathcal D(\Gamma)$ be the space of all discrete faithful representations of $\Gamma$ in $\Sp$.
Then Theorem \ref{thm:1.3} gives a necessary and sufficient condition for the existence of a representation in $\mathcal D(\Gamma)$ whose trace field is real.
From this point of view, one may get an answer to the question that given a discrete subgroup $\Gamma$ of $\Sp$, there exists a representation in $\mathcal D(\Gamma)$ such that $\rho(\Gamma)$ has real trace field.
According to Theorem \ref{thm:1.3}, any discrete subgroup of $\Sp$ with real trace field has cohomological dimension at most $n$. Furthermore in particular when $n=3$, it is well known that every nonelementary discrete subgroup of $\mathrm{SO}(3,1)$ is either a hyperbolic group or a relatively hyperbolic group (see \cite{Ohs}). Hence we obtain the following corollaries immediately.

\begin{corollary}
Let $\Gamma$ be a nonelementary discrete subgroup of $\su$ or $\Sp$ for $n\geq 2$. If the virtual cohomological dimension of $\Gamma$ is greater than $n$, then the trace field of any discrete subgroup of $\su$ or $\Sp$ which is isomorphic to $\Gamma$ can not be real.
\end{corollary}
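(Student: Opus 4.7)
The plan is to argue by contradiction and reduce to Theorem \ref{thm:1.3}. Suppose there were a discrete subgroup $\Gamma'$ of $\su$ or $\Sp$ isomorphic to $\Gamma$ such that the trace field of $\Gamma'$ is real. Since virtual cohomological dimension is an invariant of the abstract group, $\mathrm{vcd}(\Gamma') = \mathrm{vcd}(\Gamma) > n$, so it suffices to show that any nonelementary discrete subgroup of $\su$ or $\Sp$ with real trace field has virtual cohomological dimension at most $n$.

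To obtain that bound, I would first invoke Selberg's lemma to pass to a torsion-free finite index subgroup $\Gamma'_0$ of $\Gamma'$. The trace field of $\Gamma'_0$ remains real (it is a subfield of the trace field of $\Gamma'$), and $\Gamma'_0$ is still nonelementary because elementarity is detected on finite index subgroups of discrete subgroups of rank one Lie groups. Thus $\Gamma'_0$ satisfies the hypothesis (i) of Theorem \ref{thm:1.3}.

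Applying Theorem \ref{thm:1.3} to $\Gamma'_0$ then yields a discrete faithful representation of $\Gamma'_0$ into $\mathrm{SO}(n,1)$ or $\mathrm{O}(n,1)$. Its image acts properly discontinuously on the contractible $n$-manifold $\mathbf H_{\mathbb R}^n$, and this action is free since $\Gamma'_0$ is torsion-free. Therefore $\Gamma'_0$ is the fundamental group of the aspherical real hyperbolic $n$-orbifold (in fact manifold) $\mathbf H_{\mathbb R}^n/\rho(\Gamma'_0)$, which gives $\mathrm{cd}(\Gamma'_0)\leq n$. Consequently $\mathrm{vcd}(\Gamma') \leq n$, contradicting $\mathrm{vcd}(\Gamma') > n$, and the corollary follows.

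No real obstacle is expected; the content of the corollary is essentially packaged inside Theorem \ref{thm:1.3}, and the only subtle point is keeping track that passing to a torsion-free subgroup of finite index preserves both the nonelementary condition and the reality of the trace field, which is immediate from the definitions.
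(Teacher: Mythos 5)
Your argument is correct and follows essentially the same route as the paper, which deduces the corollary directly from Theorem \ref{thm:1.3} via the observation that a group with real trace field acts properly discontinuously on $\mathbf H_{\mathbb R}^n$ and hence has virtual cohomological dimension at most $n$. Your use of Selberg's lemma to pass to a torsion-free finite-index subgroup is a detail the paper glosses over (Theorem \ref{thm:1.3} is stated only for torsion-free groups), and it is the right way to make the statement about \emph{virtual} cohomological dimension precise.
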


\begin{corollary}
Let $\Gamma$ be a nonelementary discrete subgroup of $\mathrm{SU}(3,1)$ or $\mathrm{Sp}(3,1)$. Suppose that $\Gamma$ is neither a hyperbolic group nor relatively hyperbolic group, the trace field of any discrete subgroup of  $\mathrm{SU}(3,1)$ or $\mathrm{Sp}(3,1)$ isomorphic to $\Gamma$ can not be real.
\end{corollary}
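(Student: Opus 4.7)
The plan is to argue by contradiction, using the preceding theorems to transplant $\Gamma$ into a three-dimensional real hyperbolic setting and then invoking the cited dichotomy. Suppose, toward a contradiction, that some discrete subgroup $\Gamma'$ of $\mathrm{SU}(3,1)$ (resp.~$\mathrm{Sp}(3,1)$) is isomorphic to $\Gamma$ and has real trace field; then $\Gamma'$ is nonelementary since $\Gamma$ is. I would first apply Theorem~\ref{thm:1.1} (resp.~Theorem~\ref{thm:1.2}) to obtain a $\Gamma'$-invariant totally geodesic submanifold $Y \subset \mathbf{H}^3_{\mathbb{C}}$ (resp.~$Y \subset \mathbf{H}^3_{\mathbb{H}}$) of constant negative sectional curvature and real dimension between $2$ and $3$. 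The observation in the paragraph preceding Theorem~\ref{thm:1.3} then identifies $\Gamma'$ with a nonelementary discrete subgroup of $\mathrm{Isom}(Y) \hookrightarrow \mathrm{O}(3,1)$; equivalently, in the torsion-free case one may quote the implication (i)$\Rightarrow$(ii) of Theorem~\ref{thm:1.3} directly. Via the abstract isomorphism $\Gamma' \cong \Gamma$, it follows that $\Gamma$ is isomorphic to a nonelementary discrete subgroup of $\mathrm{O}(3,1)$.

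At this point I would invoke the dichotomy of Ohshika~\cite{Ohs} quoted just above: every nonelementary discrete subgroup of $\mathrm{SO}(3,1)$ is either a hyperbolic group or a relatively hyperbolic group. Both properties are abstract group-theoretic invariants and pass between commensurable groups, so they transfer from the image in $\mathrm{O}(3,1)$ (via its orientation-preserving index-two subgroup) back to $\Gamma$. This forces $\Gamma$ itself to be hyperbolic or relatively hyperbolic, contradicting the standing hypothesis.

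The only subtle step is verifying that the restricted action of $\Gamma'$ on $Y$ is faithful, so that we really obtain an embedding into $\mathrm{O}(3,1)$ rather than a mere quotient map. This is implicit in the passage between Theorems~\ref{thm:1.1}--\ref{thm:1.2} and Theorem~\ref{thm:1.3} and would need to be made explicit here: the pointwise stabiliser of $Y$ in $\Gamma'$ is a normal subgroup whose nontriviality, combined with discreteness and the nonelementarity of the induced action, would contradict the rank-one structure of $\mathrm{SU}(3,1)$ or $\mathrm{Sp}(3,1)$. Beyond this bookkeeping, the corollary is essentially a formal consequence of the preceding theorems and the Ohshika dichotomy, so I do not anticipate any genuine obstacle.
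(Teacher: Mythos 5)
Your argument is correct and is essentially the paper's own derivation: the authors obtain this corollary ``immediately'' from Theorem~\ref{thm:1.3} together with Ohshika's dichotomy for nonelementary discrete subgroups of $\mathrm{SO}(3,1)$, exactly as you do. One small correction on your ``subtle step'': the pointwise stabiliser of $Y$ in $\Gamma'$ need not be trivial (it is a discrete subgroup of the compact factor $\mathrm{SU}(n-m)$ or $\mathrm{Sp}(n-m)$, hence only finite), so the right way to finish is not that its nontriviality contradicts the rank-one structure, but that hyperbolicity and relative hyperbolicity are invariant under quotients by finite normal subgroups, so they still transfer from the image in $\mathrm{O}(3,1)$ back to $\Gamma$.
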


In $\mathrm{SL}(2,\mathbb C)$ case, it is not difficult to see that the condition for a discrete group being nonelementary is equivalent to the condition for a discrete group being irreducible. Hence one can restate the Maskit's theorem as follows: If the trace field of an irreducible discrete subgroup $\Gamma$ of $\mathrm{SL}(2,\mathbb C)$ is real, 
$\Gamma$ is conjugate to a subgroup of $\mathrm{SL}(2,\mathbb R)$. In this viewpoint, we establish another generalized version of the Maskit's theorem for $\su$ and $\Sp$ in algebraic aspect.

\begin{theorem}\label{thm:1.6}
Let $\Gamma$ be an irreducible discrete subgroup of $\su$ $($resp. $\Sp )$. Then the trace field of $\Gamma$ is real if and only if $\Gamma$ is conjugate to a Zariski dense discrete subgroup of $\mathrm{SO}(n,1)$ $($resp. $ \mathrm{O}(n,1))$.
\end{theorem}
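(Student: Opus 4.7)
\emph{Reverse direction.} If $\Gamma$ is conjugate into $\mathrm{SO}(n,1) \subset \su$ (resp.\ $\mathrm{O}(n,1) \subset \Sp$) as a Zariski dense subgroup, then every element of $\Gamma$ is represented by a real matrix, so every trace lies in $\R$ and the trace field is real. For irreducibility in the ambient group, note that $\mathrm{O}(n,1)$ acts irreducibly on $\R^{n+1}$ for $n \geq 2$; extending scalars to $\C^{n+1}$ (resp.\ $\H^{n+1}$) preserves this property, and a Zariski dense subgroup inherits it.

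\emph{Forward direction, reducing to $\mathrm{O}(n,1)$.} Assume that the trace field of $\Gamma$ is real. Theorem \ref{thm:1.1} (resp.\ Theorem \ref{thm:1.2}) produces a $\Gamma$-invariant totally geodesic submanifold $N$ of constant negative sectional curvature in $\hc$ (resp.\ in $\hH$, with $N$ not isometric to $\mathbf{H}_{\mathbb H}^1$), so $N$ is either $\mathbf{H}_{\mathbb R}^k$ for some $2 \le k \le n$ or $\mathbf{H}_{\mathbb C}^1$. A subspace argument rules out every case but $N = \mathbf{H}_{\mathbb R}^n$. A copy of $\mathbf{H}_{\mathbb R}^k$ in $\hc$ (resp.\ in $\hH$) corresponds to a totally real $(k+1)$-dimensional subspace $V \subset \C^{n+1}$ (resp.\ $V \subset \H^{n+1}$); any isometry preserving $V$ also preserves its $\C$-span $V \otimes_{\R} \C$ (resp.\ its $\H$-span $V \otimes_{\R} \H$), which is proper when $k < n$. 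A copy of $\mathbf{H}_{\mathbb C}^1$ in $\hc$ corresponds to a proper complex $2$-subspace of $\C^{n+1}$; inside $\hH$, the corresponding complex $2$-subspace spans a proper quaternionic $2$-subspace. Irreducibility of $\Gamma$ excludes every such proper invariant subspace, forcing $N = \mathbf{H}_{\mathbb R}^n$, and so, after conjugation, $\Gamma \subset \mathrm{SO}(n,1)$ (resp.\ $\mathrm{O}(n,1)$).

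\emph{Upgrading to Zariski density.} Irreducibility of $\Gamma \subset \su$ (resp.\ $\Sp$) descends to irreducibility of $\Gamma \subset \mathrm{O}(n,1)$ acting on $\R^{n+1}$, since a proper real invariant subspace $W \subset \R^{n+1}$ would yield the proper invariant subspace $W \otimes_{\R} \C$ (resp.\ $W \otimes_{\R} \H$). Let $H$ be the real Zariski closure of $\Gamma$ inside $\mathrm{O}(n,1)$; it is an algebraic subgroup still acting irreducibly on $\R^{n+1}$. Invoking the rank-one structure theory of $\mathrm{O}(n,1)$, every proper algebraic subgroup is contained in a parabolic subgroup (fixing a null line in $\R^{n+1}$) or in a reductive subgroup $\mathrm{O}(k,1) \times \mathrm{O}(n-k)$ with $k < n$ (preserving a nontrivial orthogonal decomposition of $\R^{n+1}$); either case forces reducibility, a contradiction. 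Hence $H = \mathrm{O}(n,1)$ and $\Gamma$ is Zariski dense.

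The main obstacle is this last classification step: confirming that no proper algebraic subgroup of $\mathrm{O}(n,1)$ acts irreducibly on the standard $(n+1)$-dimensional representation for $n \ge 2$. I would either cite it from the structure theory of rank-one real Lie groups or verify it directly using the Iwasawa/parabolic decomposition. A secondary bookkeeping difficulty in the $\Sp$ case is that the subspace argument must simultaneously track real, complex, and quaternionic spans so as to ensure that every candidate $N \ne \mathbf{H}_{\mathbb R}^n$ produces a genuine proper quaternionic invariant subspace.
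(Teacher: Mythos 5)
Your forward-direction reduction (an invariant totally geodesic submanifold $N$ from Theorem \ref{thm:1.1}/\ref{thm:1.2}, then irreducibility forcing $N=\mathbf{H}^n_{\mathbb R}$ via complex and quaternionic spans) is exactly the route the paper takes through Theorem \ref{realtrace} and Theorem \ref{L2}. The genuine gap is the Zariski-density step, which you yourself flag as the main obstacle. The dichotomy you invoke --- every proper algebraic subgroup of $\mathrm{O}(n,1)$ lies in a parabolic or in $\mathrm{O}(k,1)\times\mathrm{O}(n-k)$ with $k<n$ --- is false as stated: $\mathrm{SO}(n,1)$ and its identity component are proper algebraic subgroups contained in neither, and they act irreducibly on $\R^{n+1}$, so the most your argument can yield is $\overline{\Gamma}\supseteq \mathrm{SO}^{+}(n,1)$; and even the corrected claim (a proper algebraic subgroup acting irreducibly must contain the identity component) requires an argument, e.g.\ ruling out a nontrivial unipotent radical, ruling out a compact reductive closure, and then using the rank-one structure to force the noncompact semisimple factor to be all of $\mathrm{SO}(n,1)$. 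The paper never needs such a classification, because Proposition \ref{L} and Theorem \ref{L2} have already computed the Zariski closure: $\overline{\Gamma}^{\circ}=LT$ with the noncompact factor of $L$ equal to the \emph{full} $I_{n-m}\oplus\mathrm{SO}(m,1)$ (or $I_{n-1}\oplus\mathrm{SU}(1,1)$) and $T$ a torus in the compact complement. Irreducibility then kills the compact block and forces $m=n$, so Zariski density in $\mathrm{SO}(n,1)$ drops out immediately. The clean fix is to run your irreducibility argument on the output of Proposition \ref{L}/Theorem \ref{L2} rather than only on the totally geodesic submanifold.

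A second, smaller problem is your reverse direction in the $\Sp$ case. The sentence ``every element of $\Gamma$ is represented by a real matrix'' conflates $\Gamma$ with its conjugate: if $\Gamma=g\Gamma_{0}g^{-1}$ with $\Gamma_{0}\subset\mathrm{O}(n,1)$ and $g\in\Sp$, the elements of $\Gamma$ need not be real matrices, and since the trace is not conjugation-invariant over $\H$ (as the paper stresses, and as its explicit $g\,\suone\,g^{-1}$ example shows), $tr(gAg^{-1})$ can fail to be real even for real $A$. So the ``if'' direction must be read as in Theorem \ref{thm:1.3}: the conjugated copy of $\Gamma$ inside $\mathrm{O}(n,1)$ has real trace field. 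This ambiguity is present in the paper's own statement, which only argues the forward implication for $\Sp$, but your written argument for the converse does not go through as stated.
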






{\bf Acknowledgement.} We would like to thank Dave Witte Morris for his helpful discussions.

\section{Preliminaries}

\subsection{Complex hyperbolic spaces}

Let $\C^{n,1}$ be a complex vector space of dimension $n+1$ with a
Hermitian form of signature $(n,1)$. An element of $\C^{n,1}$ is a
column vector $z=(z_1,\ldots,z_{n+1})^t$. In what follows,
we choose the Hermitian form on $\C^{n,1}$ given by the matrix $I_{n,1}$
$$
I_{n,1}=\left[\begin{matrix} I_n & 0 \\ 0 & -1
\end{matrix}\right].
$$
Thus $\<z,w\>=w^*I_{n,1}z=z_1 \overline{w}_1+z_2
\overline{w}_2+\cdot\cdot\cdot+z_n
\overline{w}_n-z_{n+1}
\overline{w}_{n+1}$, where $w^*$ is the Hermitian transpose of $w$.

Let $\P:\C^{n,1}\setminus\{\0\} \rightarrow {\C}P^{n}$ be the
canonical projection onto a complex projective space. Consider the
following subspaces in $\C^{n,1}$;
\[
V_0 \, &=& \{ z\in {\C}^{n,1}- \{ \0 \}\ \ |\ \ \<z,z\>=0 \ \},\\
V_- &=& \{ z\in {\C}^{n,1}\ \ |\ \ \<z,z\> < 0 \ \}.
\]
The $n$-dimensional \emph{complex hyperbolic space $\hc$} is
defined as $\P(V_-)$. The \emph{boundary} $\partial\hc$ is
defined as $\P(V_0)$.

For a vector $v$ in $\C^{n,1}\setminus\{{\mathbf 0}\}$, we shall
use the notation $\hat{v}$ to denote the point $\P(v)$ in $\C
P^n$. If a point $p$ in $\C P^n$ is given, the inverse space
$\P^{-1}(p)$ is of $1$-dimensional. We shall denote a vector in
$\P^{-1}(p)$ as $\tilde{p}$ in the situation that the choice of a
vector in $\P^{-1}(p)$ makes no confusion likewise when the
definition of the Bergmann metric is given below.

The \emph{Bergmann metric} on $\hc$ is given by the distance
formula;
$$
\cosh^2\left(\frac{\rho(p,q)}{2}\right)=
\frac{\<\tilde{p},\tilde{q}\> \<\tilde{q},\tilde{p}\>}
{\<\tilde{p},\tilde{p}\> \<\tilde{q},\tilde{q}\>},
$$
for $p,q\in\hc$. Notice that any complex multiplication on
$\tilde{p}$, or on $\tilde{q}$ in the right hand side of the above
relation will make no difference on its value;
$$
\frac{\<\lambda\tilde{p},\tilde{q}\>
\<\tilde{q},\lambda\tilde{p}\>}
{\<\lambda\tilde{p},\lambda\tilde{p}\> \<\tilde{q},\tilde{q}\>}
=\frac{\lambda\<\tilde{p},\tilde{q}\>
\bar{\lambda}\<\tilde{q},\tilde{p}\>}
{\lambda\bar{\lambda}\<\tilde{p},\tilde{p}\>
\<\tilde{q},\tilde{q}\>} =\frac{\<\tilde{p},\tilde{q}\>
\<\tilde{q},\tilde{p}\>} {\<\tilde{p},\tilde{p}\>
\<\tilde{q},\tilde{q}\>}.
$$

Let $\mathrm{U}(n,1)$ be the unitary group corresponding to the Hermitian form. Then the holomorphic isometry group of $\hc$ is the projective unitary group $\mathrm{PU}(n,1)$ and the full isometry group of $\hc$ is generated by $\mathrm{PU}(n,1)$ and complex conjugation. We denote by $\su$ the subgroup of linear transformations in $\mathrm{U}(n,1)$ with determinant $1$. We notice that this group acts transitively by isometries on  $\hc$. Then the usual trichotomy which classifies isometries of real hyperbolic spaces also holds here. That is;
\begin{itemize}
\item An isometry is \emph{loxodromic} if it fixes exactly two points of $\partial\hc$.
\item An isometry is \emph{parabolic} if it fixes exactly one point of $\partial\hc$.
\item An isometry is \emph{elliptic} if it fixes at least one point of $\hc$.
\end{itemize}

In $\hc$, it is well known that there are two types of totally geodesic submanifolds $\mathbf{H}^k_{\mathbb C}$ and $\mathbf{H}^k_{\mathbb R}$. Note that a totally geodesic submanifold of constant negative sectional curvature is either of the form $\mathbf{H}^k_{\mathbb R}$ or $\mathbf H_{\mathbb C}^1$.
We say that a discrete group is \emph{elementary} if its limit set consists of at most two points, and the others are called \emph{nonelementary}. 

\begin{definition}
Let $\Gamma$ be a subgroup of $\su$. Then the \emph{trace field} of $\Gamma$ is defined as the field generated by the traces of all the elements of $\Gamma$ over the base field $\mathbb{Q}$ of rational numbers.
\end{definition}

See \cite{God99} and \cite{Mc} for more details about the trace field.

\subsection{Quaternionic hyperbolic spaces}
Let $\H^{n,1}$ be a quaternionic vector space of dimension $n+1$ with a
Hermitian form of signature $(n,1)$. An element of $\H^{n,1}$ is a
column vector $p=(p_1,\ldots,p_{n+1})^t$. As in the complex hyperbolic case, we choose
the Hermitian form on $\H^{n,1}$ given by the matrix $I_{n,1}$
$$
I_{n,1}=\left[\begin{matrix} I_n & 0 \\ 0 & -1
\end{matrix}\right].
$$
Thus $\<p,q\>=q^*I_{n,1}p=\overline{q}^tI_{n,1}p=\overline{q}_1p_1+\overline{q}_2p_2+\cdot\cdot\cdot+\overline{q}_np_n-\overline{q}_{n+1}p_{n+1}$,
where $p=(p_1,\ldots,p_{n+1})^t$, $q=(q_1,\ldots,q_{n+1})^t \in \H^{n,1}$. The group $\Sp$ is the subgroup of $\mathrm{GL}(n+1,\mathbb H)$ which, when acting on the left, preserves the Hermitian form given above.

Let $\P:\H^{n,1}\setminus\{\0\} \rightarrow {\H}P^{n}$ be the
canonical projection onto a quaternionic projective space. Consider the
following subspaces in $\H^{n,1}$;
\[
V_0 \, &=& \{ z\in {\H}^{n,1}- \{ \0 \}\ \ |\ \ \<z,z\>=0 \ \},\\
V_- &=& \{ z\in {\H}^{n,1}\ \ |\ \ \<z,z\> < 0 \ \}.
\]
The $n$-dimensional \emph{quaternionic hyperbolic space $\hH$} is
defined as $\P(V_-)$. The \emph{boundary} $\partial\hH$ is
defined as $\P(V_0)$.
There is a metric on $\hH$ called the Bergman metric and the isometry group of $\hh{n}$ with respect to this metric is
\begin{align*}
\mathrm{PSp}(n,1) &=\{[A]:A \in \mathrm{GL}(n+1,\H), \langle p,p' \rangle = \langle Ap,Ap' \rangle, p,p' \in \H^{n,1}\}\\
&= \{[A]:A \in \mathrm{GL}(n+1,\H), I_{n,1}=A^*I_{n,1}A\},
\end{align*}
where $[A]: {\H}P^{n} \rightarrow {\H}P^{n}; x\H \mapsto (Ax)\H$ for $A \in \mathrm{Sp}(n,1)$. Here we adopt the convention that the action of $\mathrm{Sp}(n,1)$ on $\hH$ is left and the action of projectivization of $\mathrm{Sp}(n,1)$ is right action. In fact $\mathrm{PSp}(n,1)$ is the quotient group by the real scalar matrices in $\Sp$. Thus it is not difficult to see that $$\mathrm{PSp}(n,1)=\Sp / \{\pm  I \}.$$

Similarly to the complex hyperbolic space, totally geodesic submanifolds of quaternionic hyperbolic space are isometric to  either $\hh{k}$, $\mathbf{H}^k_{\mathbb C}$ or $\mathbf{H}^k_{\mathbb R}$ for some $1\leq k\leq n$. Note that a totally geodesic submanifold of constant negative sectional curvature is isometric to either $\mathbf H_{\mathbb R}^k$ for some $2\leq k \leq n$, $\mathbf H_{\mathbb C}^1$ or $\mathbf H_{\mathbb H}^1$. 
The classification of isometries by their fixed points is exactly the same as in complex hyperbolic case.

\begin{definition}
Let $\Gamma$ be a subgroup of $\Sp$. Then the \emph{trace field} of $\Gamma$ is defined as the skew field generated by the traces of all the elements of $\Gamma$ over the base field $\mathbb{Q}$ of rational numbers.
\end{definition}

We say that the trace field of $\Gamma$ is \emph{real} if the trace field of $\Gamma$ is contained in $\mathbb R$.

\subsection{Zariski topology}\label{sec:zariski}

Let $\R[x_{1,1},\ldots,x_{n,n}]$ denote the set of real polynomials in the $n^2$ variables $\{x_{j,k} \ | \ 1\leq j,k \leq n \}$.
A subset $H$ of $\mathrm{SL}(n,\R)$ is called \emph{Zariski closed} if there is a subset $\mathcal S$ of $\R[x_{1,1},\ldots,x_{n,n}]$ such that $H$ is the zero locus of $\mathcal S$. In particular, when $H$ is a subgroup of $\mathrm{SL}(n,\R)$, $H$ is called a \emph{real algebraic group}.
It is a standard fact that any Zariski closed subset of $\mathrm{SL}(n,\R)$ has only finitely many components. Furthermore, a Zariski closed subgroup of $\mathrm{SL}(n,\R)$ is a $C^\infty$ submanifold of $\mathrm{SL}(n,\R)$ and so, a Lie group.

\begin{definition} The \emph{Zariski closure} of a subset $H$ of $\mathrm{SL}(n,\R)$ is the (unique) smallest Zariski closed subset of $\mathrm{SL}(n,\R)$ that contains $H$. We use $\overline H$ to denote the Zariski closure of $H$.
\end{definition}

It is well-known that if $H$ is a subgroup of $\mathrm{SL}(n,\R)$, then $\overline H$ is also a subgroup of $\mathrm{SL}(n,\R)$.

\begin{definition}
A subgroup $H$ of $\mathrm{SL}(n,\R)$ is \emph{almost Zariski closed} if $H$ is a finite-index subgroup of $\overline H$.
\end{definition}

We remark that a connected subgroup $H$ of $\mathrm{SL}(n,\R)$ is almost Zariski closed if and only if it is the identity component of a Zariski closed subgroup.

\section{Complex hyperbolic Kleinian groups}
We are concerned with subgroups of $\su$ whose trace fields are real.
Let us define a subset $\mathcal R_{su}$ of $\su$ by $$\mathcal R_{su} = \{g\in \su \ | \ tr(g) \in \mathbb R\}.$$
Then our starting observation is that $\mathcal R_{su}$ is Zariski closed in the following sense:
It is well known that complex numbers $a+ib$ can be represented by $2 \times 2$ real matrices that have the following form:
$$\left[ \begin{array}{rr} a & -b \\ b & a \end{array} \right]$$
Via this representation, one can embed $\su$ into $\SL$. Let's denote the embedding by $\phi : \su \rightarrow \SL$.

\begin{lemma}\label{lem:realtrace}
$\phi(\mathcal R_{su})$ is a Zariski closed subset of $\SL$.
\end{lemma}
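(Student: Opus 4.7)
The plan is to exhibit $\phi(\mathcal R_{su})$ as the common zero locus in $\SL$ of an explicit finite collection of real polynomials in the matrix entries $x_{i,j}$, $1\le i,j\le 2n+2$. The key point is that the block embedding $\phi$ replaces the complex structure on $\mathbb C^{n+1}$ by a real-linear block structure on $\mathbb R^{2n+2}$, so every defining condition of $\mathcal R_{su}$ inside $\su$ translates to a polynomial condition on the entries of $\phi(g)$.

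First I would show that $\phi(\su)$ itself is Zariski closed in $\SL$. A matrix $M\in\SL$ lies in the image $\phi(M_{n+1}(\mathbb C))$ precisely when each $2\times 2$ block of $M$ has the form $\bigl(\begin{smallmatrix}a&-b\\b&a\end{smallmatrix}\bigr)$, which amounts to the linear equations $x_{2j-1,2k-1}=x_{2j,2k}$ and $x_{2j-1,2k}=-x_{2j,2k-1}$ for $1\le j,k\le n+1$. Because $\phi$ is a real-algebra homomorphism that carries the Hermitian adjoint to the transpose, the condition $A^{\ast}I_{n,1}A=I_{n,1}$ translates for $M=\phi(A)$ to the polynomial system $M^{T}\phi(I_{n,1})M=\phi(I_{n,1})$; and the condition $\det_{\mathbb C}A=1$, which selects $\su$ inside $\mathrm{U}(n,1)$, is polynomial in the real and imaginary parts of the entries of $A$, hence polynomial in the entries of $M$. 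Together these cut out $\phi(\su)$ as a Zariski closed subset of $\SL$.

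For the real-trace condition, write $A_{jj}=a_{jj}+ib_{jj}$; then $\mathrm{tr}(A)\in\mathbb R$ is equivalent to $\sum_{j=1}^{n+1}b_{jj}=0$. Since $b_{jj}$ appears as the $(2j,\,2j-1)$-entry of $\phi(A)$, this becomes the single linear equation $\sum_{j=1}^{n+1}x_{2j,\,2j-1}=0$ on the entries of $M$. Adjoining this linear polynomial to the defining polynomials of $\phi(\su)$ realizes $\phi(\mathcal R_{su})$ as a common zero locus of real polynomials in $\SL$, which is the desired conclusion.

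The argument is essentially bookkeeping; the only small verification is that $\phi$ intertwines the Hermitian adjoint on $M_{n+1}(\mathbb C)$ with the real transpose on $M_{2n+2}(\mathbb R)$, which follows directly from the defining block formula $\phi(a+ib)=\bigl(\begin{smallmatrix}a&-b\\b&a\end{smallmatrix}\bigr)$. I do not foresee any conceptual obstacle beyond keeping track of indices in the block structure.
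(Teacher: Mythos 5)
Your proof is correct and takes essentially the same approach as the paper: both reduce the real-trace condition to the single linear equation $\sum_{j=1}^{n+1} x_{2j,\,2j-1}=0$ (i.e.\ $b_{1,1}+\cdots+b_{n+1,n+1}=0$) on the entries of $\phi(g)$ and adjoin it to the defining polynomials of $\phi(\su)$ in $\SL$. The only difference is that you spell out explicitly why $\phi(\su)$ is Zariski closed (block shape, transported Hermitian condition, determinant condition), whereas the paper simply asserts this as a known fact.
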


\begin{proof}
First note that $\su$ is a Zariski closed subgroup of $\SL$. 
Let $g=(g_{m,l})$ be a matrix in $\su$ where $g_{m,l}=a_{m,l}+i b_{m,l}$ for $a_{m,l}, b_{m,l} \in \mathbb R$. Then $\phi(g)$ is written by 
$$\left[ \begin{array}{rrcrr} a_{1,1} & -b_{1,1} & \cdots & a_{1,n+1} & -b_{1,n+1} \\ b_{1,1} & a_{1,1} & \cdots & b_{1,n+1} & a_{1,n+1} \\
\vdots & \vdots & \ddots & \vdots & \vdots \\ a_{n+1,1} & -b_{n+1,1} & \cdots & a_{n+1,n+1} & -b_{n+1,n+1} \\ b_{n+1,1} & a_{n+1,1} & \cdots & b_{n+1,n+1} & a_{n+1,n+1} \end{array}\right].$$

Clearly $tr(g)=g_{1,1}+\cdots +g_{n+1,n+1}$ and hence, it is easy to see that $tr(g) \in \mathbb R$ if and only if $b_{1,1} +\cdots +b_{n+1,n+1}=0$. Since $b_{1,1} +\cdots +b_{n+1,n+1}$ corresponds to a real polynomial with variables in the matrix entries of $\SL$, $\mathcal R_{su}$ is a Zariski closed subset of $\SL$.
\end{proof}

We consider the Zariski topology on $\SL$ and then the pullback topology on $\su$ under the embedding $\phi : \su \rightarrow \SL$.
Let $\Gamma$ be a subgroup of $\su$ whose trace field is contained in $\mathbb R$. Then $\Gamma$ is a subset of $\mathcal R_{su}$. Since $\mathcal R_{su}$ is Zariski closed according to Lemma \ref{lem:realtrace}, 
the Zariski closure of $\Gamma$, denoted by $\overline{\Gamma}$, is contained in $\mathcal R_{su}$. From this observation, we immediately obtain the following Corollary.

\begin{corollary}\label{Zreal}
Let $\Gamma$ be a subgroup of $\su$. Then every element of $\Gamma$ has real trace if and only if every element of $\overline \Gamma$ has real trace.
\end{corollary}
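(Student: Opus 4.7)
The plan is to unpack the definitions and apply Lemma \ref{lem:realtrace} directly. The $(\Leftarrow)$ direction is immediate, since $\Gamma \subseteq \overline{\Gamma}$ implies that any trace-theoretic property enjoyed by every element of $\overline{\Gamma}$ is inherited by every element of $\Gamma$.

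For the $(\Rightarrow)$ direction, I would transfer everything into $\SL$ via the embedding $\phi$. Suppose every element of $\Gamma$ has real trace; then $\phi(\Gamma) \subseteq \phi(\mathcal R_{su})$, and by Lemma \ref{lem:realtrace} the right-hand side is Zariski closed in $\SL$. The Zariski closure of $\phi(\Gamma)$ inside $\SL$ is therefore contained in $\phi(\mathcal R_{su})$. Pulling this back through $\phi$ to $\su$ equipped with the pullback Zariski topology gives $\overline{\Gamma} \subseteq \mathcal R_{su}$, which is the desired conclusion.

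The only step requiring a moment of care is compatibility of the two ambient spaces: one needs to check that $\overline{\Gamma}$, computed as the closure inside $\su$ with the pullback topology, corresponds under $\phi$ to the Zariski closure of $\phi(\Gamma)$ in $\SL$ intersected with $\phi(\su)$. This is automatic because $\phi(\su)$ is itself Zariski closed in $\SL$ (as recorded at the beginning of the proof of Lemma \ref{lem:realtrace}), so the pullback topology on $\su$ coincides with the subspace Zariski topology inherited from $\SL$. Beyond this bookkeeping there is no real obstacle; once the Zariski-closedness of $\mathcal R_{su}$ is in hand, the corollary is a formal consequence of the minimality property of Zariski closure.
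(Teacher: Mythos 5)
Your proof is correct and follows essentially the same route as the paper: the forward direction is exactly the paper's observation that $\Gamma\subseteq\mathcal R_{su}$ together with Lemma \ref{lem:realtrace} forces $\overline{\Gamma}\subseteq\mathcal R_{su}$ by minimality of the Zariski closure, and the reverse direction is immediate from $\Gamma\subseteq\overline{\Gamma}$. The extra remark about the pullback topology agreeing with the subspace topology on $\phi(\su)$ is correct and a reasonable piece of bookkeeping that the paper leaves implicit.
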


It is well known that the Zariski closure of a subgroup of $\SL$ is a Zariski closed subgroup of $\SL$ and any Zariski closed subgroup is a Lie group with finitely many connected components. In particular, the identity component is a normal subgroup and connected components are the cosets of the identity component. 

Corollary \ref{Zreal} means that it is sufficient to work with Zariski closed subgroups of $\su$ to characterize subgroups of $\su$ whose trace fields are real. If $\Gamma$ is nonelementary, its Zariski closure $\overline \Gamma$ can not have small dimension. 

\begin{lemma}
Let $\Gamma$ be a nonelementary discrete subgroup of $\su$. Then $\overline \Gamma$ has dimension at least $3$.
\end{lemma}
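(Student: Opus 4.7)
The plan is to prove the contrapositive: if $\dim \overline{\Gamma} \leq 2$, then $\Gamma$ must be elementary. Since $\Gamma \subseteq \overline{\Gamma}$, the strategy is to first pin down the algebraic structure of $\overline{\Gamma}$ as a Lie group using the fact (from Section~\ref{sec:zariski}) that it has only finitely many connected components, and then to invoke the well-known principle that virtually solvable discrete subgroups of a rank one semisimple Lie group are elementary.

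First I would examine the identity component $(\overline{\Gamma})^0$, which by Section~\ref{sec:zariski} is a closed normal subgroup of finite index in $\overline{\Gamma}$. If $\dim \overline{\Gamma} \leq 2$, then $(\overline{\Gamma})^0$ is a connected Lie group of dimension at most $2$, and every such Lie group is solvable: in dimensions $0$ and $1$ it is abelian, and the only non-abelian connected $2$-dimensional Lie group is the affine group of the real line, whose commutator subgroup is $1$-dimensional and abelian. Hence $(\overline{\Gamma})^0$ is solvable, and since it has finite index in $\overline{\Gamma}$, the entire group $\overline{\Gamma}$ is virtually solvable. Consequently $\Gamma$ itself is virtually solvable.

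To finish, I would appeal to the standard dichotomy for discrete subgroups of $\su$: either $\Gamma$ is elementary, or it contains a non-abelian free subgroup (a consequence of the Margulis lemma, or equivalently of the Tits alternative applied in rank one). Since virtually solvable groups cannot contain non-abelian free subgroups, the virtual solvability of $\Gamma$ forces $\Gamma$ to be elementary, contradicting the hypothesis and completing the proof. The main obstacle is the last step, namely justifying that virtually solvable discrete subgroups of $\su$ are elementary; this is a classical fact (any virtually solvable discrete subgroup of $\mathrm{Isom}(\mathbf{H}^n_{\mathbb{C}})$ admits a finite orbit on $\mathbf{H}^n_{\mathbb{C}} \cup \partial \mathbf{H}^n_{\mathbb{C}}$), but it is worth citing a reference carefully; the remaining steps follow immediately from the preliminaries already recorded in Section~2.3.
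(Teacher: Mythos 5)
Your proof is correct and follows essentially the same route as the paper: pass to the identity component $\overline{\Gamma}^{\circ}$, observe that a connected group of dimension at most $2$ is solvable (the paper cites Borel for this, you argue it directly), conclude that $\Gamma$ is virtually solvable, and derive a contradiction with nonelementarity. Your elaboration of the last step via free subgroups fills in a detail the paper leaves implicit, but the overall argument is the same.
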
 
\begin{proof}
Suppose that $\overline \Gamma$ has dimension at most $2$. Denote by $\overline \Gamma^\circ$ the identity component of $\overline \Gamma$. Then $\overline \Gamma^\circ$ is a connected real algebraic group with dimension at most $2$. According to \cite[Corollary 11.6]{Bor}, $\overline \Gamma^\circ$ is solvable. This implies that $\overline \Gamma$ is virtually solvable since $\overline \Gamma^\circ$ is a finite index subgroup of $\overline \Gamma$. This contradicts to the assumption that $\Gamma$ is nonelementary. Thus $\overline \Gamma$ has dimension at least $3$.
\end{proof}

To characterize nonelementary subgroups with real trace fields, we reduce the problem to characterize Zariski closed subgroups of $\su$ with dimension at least $3$ for which the trace of every element is a real number.
It is much easier to deal with Zariski closed subgroups with dimension at least $3$ than arbitrary nonelementary subgroups. This is the key idea of the paper.
We now recall the structure theorem for almost Zariski closed groups. We refer the reader to \cite[Theorem 4.4.7]{Mor} for more details.

\begin{theorem}[\cite{Mor}]\label{thm:structure}
Let $H$ be a connected subgroup of $\mathrm{SL}(m,\mathbb R)$ that is almost Zariski closed. Then there exist:
\begin{itemize}
\item a semisimple subgroup $L$ of $H$,
\item a torus $T$ in $H$, and
\item a unipotent subgroup $U$ of $H$,
\end{itemize}
such that
\begin{itemize}
\item[1)] $H=(LT)\ltimes U$,
\item[2)] $L, T$, and $U$ are almost Zariski closed, and
\item[3)] $L$ and $T$ centralize each other and have finite intersection.
\end{itemize}
\end{theorem}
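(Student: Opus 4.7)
The plan is to invoke the standard structure theory of real linear algebraic groups, essentially combining the Levi--Malcev decomposition with the Jordan decomposition of a reductive group. Since $H$ is almost Zariski closed and connected, one may equivalently work with the identity component of the Zariski closure $\overline{H}$, so all of the classical machinery is available.

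First I would isolate the unipotent part. Let $U = R_u(H)$ denote the unipotent radical of $H$, i.e.\ the maximal connected normal unipotent subgroup. Since $R_u(H)$ is automatically Zariski closed in $H$, and $H$ is almost Zariski closed in $\mathrm{SL}(m,\mathbb{R})$, the subgroup $U$ is almost Zariski closed. The quotient $H/U$ is reductive by definition. Next, since the characteristic is zero, the Levi--Malcev theorem produces a reductive subgroup $R \subset H$, unique up to conjugation by elements of $U$, such that the natural map $R \to H/U$ is an isomorphism and $H = R \ltimes U$.

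The second main step decomposes the reductive factor. For a connected reductive real algebraic group $R$, one has $R = L \cdot T$ where $L = [R,R]$ is semisimple and $T = Z(R)^\circ$ is a central torus; moreover $L \cap T \subset Z(L)$ is finite since $L$ is semisimple. Because $T$ is central in $R$, it centralizes $L$, giving items $(1)$ and $(3)$ of the statement. Combining with Step~1 produces the required splitting $H = (LT) \ltimes U$.

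The final task is to check the Zariski-theoretic qualifiers in item $(2)$: each of $L$, $T$, $U$ is almost Zariski closed. For $U$ this is immediate, as the unipotent radical is always Zariski closed. For $T$, the connected center of a reductive real algebraic group is an almost Zariski closed torus; for $L$, the derived subgroup of a connected reductive algebraic group in characteristic zero is Zariski closed. The main subtlety, and the step I would expect to be the most delicate, is that these decompositions behave well for \emph{real} algebraic groups rather than for their complexifications: passing between $\overline{H}$ and its identity component $\overline{H}^\circ$ can introduce finite-index issues, which is precisely what the qualifier \emph{almost} Zariski closed absorbs. For a complete treatment of these finite-index bookkeeping issues one consults the proof of \cite[Theorem~4.4.7]{Mor}.
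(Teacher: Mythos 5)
This theorem is quoted in the paper directly from \cite[Theorem 4.4.7]{Mor} with no proof given, so there is no in-paper argument to compare against; what can be judged is whether your sketch is a sound standalone derivation, and it is. Your route differs slightly from the one in Morris's book: there the starting point is the Levi decomposition of $H$ into a semisimple part and the \emph{solvable} radical, after which the radical is split as a torus semidirect a unipotent group and one must still argue that the torus can be chosen to centralize the Levi factor. You instead peel off the \emph{unipotent} radical $U=R_u(H)$ first, invoke Mostow's theorem to get a reductive complement $R$ with $H=R\ltimes U$, and then write $R=[R,R]\cdot Z(R)^\circ$; this makes condition (3) automatic, since the connected center visibly centralizes the derived group and meets it in a finite central subgroup. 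The price is that you lean on the algebraic (Mostow) form of the Levi--Malcev theorem rather than the Lie-theoretic one, and you correctly flag the only genuinely delicate point, namely that over $\mathbb R$ the groups of real points of these closed subgroups may only be of finite index in their Zariski closures --- which is exactly what the qualifier ``almost Zariski closed'' is designed to absorb. I see no gap; the sketch is an acceptable proof outline for a result the paper itself only cites.
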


The structure theorem above allows us to look at the finer structure of almost Zariski closed subgroups in the case of $\su$ as follows.

\begin{lemma}\label{semisimple}
Let $H$ be a nonamenable, connected, almost Zariski closed subgroup of $\su$. Let $H=(LT)\ltimes U$ be the decomposition as in Theorem \ref{thm:structure}. Then $L$ is a connected, noncompact, semisimple Lie group with real rank $1$ and moreover $U$ is trivial.
\end{lemma}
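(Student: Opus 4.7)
The plan is to extract the required structure of $L$ and $U$ from the nonamenability of $H$ combined with the fact that $\su$ has real rank $1$.

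I first argue that $L$ is noncompact semisimple of real rank $1$. Since $U$ is unipotent (hence nilpotent, hence amenable) and $T$ is a torus (hence abelian, hence amenable), and amenability is preserved under extensions, $H$ is amenable if and only if $L$ is. A connected semisimple Lie group is amenable iff it is compact, so the nonamenability of $H$ forces $L$ to be noncompact. For the rank, any maximal $\mathbb R$-split torus of $L$ is in particular an $\mathbb R$-split torus of $\su$, and the latter has real rank $1$; hence the real rank of $L$ is at most $1$, and noncompactness forces it to equal $1$. Connectedness of $L$ can be arranged either by invoking the refined form of Theorem~\ref{thm:structure} or by replacing $L$ with its identity component, which is still semisimple and satisfies the required properties.

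To show $U=\{1\}$ I argue by contradiction: suppose $U$ is nontrivial. Any connected unipotent subgroup of $\su$ is conjugate into the unipotent radical $N$ of a minimal parabolic $P=MAN$ of $\su$, where $M$ is compact and $A$ is a $1$-dimensional split torus. Every nontrivial element of $N$ is a parabolic isometry of $\hc$ with unique fixed point $\xi\in\partial\hc$, so any nontrivial $U\subseteq N$ has $\{\xi\}$ as its set of common boundary fixed points. Since $LT$ normalizes $U$, the group $L$ must preserve this fixed-point set, hence $L\subseteq\mathrm{Stab}(\xi)=P$. Now $L\cap N$ is a unipotent normal subgroup of the semisimple group $L$, so it is trivial. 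On the other hand, the Iwasawa decomposition of the noncompact rank-$1$ group $L$ produces a nontrivial unipotent subgroup $N_L\subseteq L$; every element of $N_L$ is unipotent in $\su$, and all unipotent elements of $P=MAN$ lie in $N$ (since the compact group $M$ and the split torus $A$ admit no nontrivial unipotent elements). Hence $\{1\}\neq N_L\subseteq L\cap N=\{1\}$, a contradiction.

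The step I expect to be the most delicate is the geometric input just used: that a nontrivial connected unipotent subgroup of $\su$ is conjugate into the unipotent radical of a minimal parabolic and has a unique common fixed point on $\partial\hc$. This combines the Langlands decomposition of parabolic subgroups of a rank-$1$ simple Lie group with the fact that nontrivial unipotent elements of $\su$ are parabolic isometries with exactly one boundary fixed point, and I would cite it from Morris's book \cite{Mor} or from standard Lie-theoretic references rather than reproving it in detail.
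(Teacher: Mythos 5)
Your proof is correct and follows the paper's strategy almost exactly: the noncompactness and rank-$1$ statement for $L$ via amenability of $T$ and $U$ and the rank bound from $\su$, and then, for $U$, the observation that a nontrivial unipotent $U$ has a unique common fixed point $\xi\in\partial\hc$ and that normality of $U$ in $H$ forces $L$ to fix $\xi$. The only genuine divergence is the final contradiction once $L\subseteq\mathrm{Stab}(\xi)$: the paper simply notes that the point stabilizer $\mathrm{Stab}(\xi)=P=MAN$ is amenable while the noncompact semisimple $L$ is not, which closes the argument in one line and reuses the amenability theme already in play. You instead run a Lie-theoretic argument inside $P$: $L\cap N$ is a normal unipotent subgroup of the semisimple group $L$, hence trivial, yet the Iwasawa decomposition of $L$ produces a nontrivial unipotent $N_L$ that must land in $N$ because $P/N\cong MA$ has no nontrivial unipotents. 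This works, but it leans on a few extra standard facts you rightly flag as delicate (that unipotency in $L$ agrees with unipotency in the ambient group, that all unipotent elements of $P$ lie in $N$ — which is really an argument about the reductive quotient $P/N$ rather than about $M$ and $A$ separately — and that a closed normal unipotent subgroup of a semisimple group is trivial even without assuming connectedness). The amenability route avoids all of this, so if you want the shortest path you should prefer it; your version has the mild advantage of not needing to know that boundary point stabilizers in rank one are amenable.
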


\begin{proof}
The connectedness of $L$ follows from the connectedness of $H$. The possible value for the real rank of $L$ is either $0$ or $1$ since $\su$ has real rank $1$. If the real rank of $L$ is $0$, then $L$ is compact. In this case, since all of $L$, $T$ and $U$ are amenable, $H$ is amenable. This contradicts to the assumption that $H$ is not amenable. Thus the real rank of $L$ should be $1$. Note that this is equivalent that $L$ is noncompact.

Now we will prove that $U$ is trivial. Assume that $U$ is not trivial. Then since every nontrivial unipotent element of $\su$ is a parabolic isometry acting on $\hc$, each element of $U$ has a unique fixed point on $\partial \hc$. Moreover, since $U$ is a unipotent subgroup of $\su$, $U$ also has a unique fixed point on $\partial \hc$. Let $\xi$ be the unique fixed point of $U$.
Due to the fact that $U$ is a normal subgroup of $H$, we have $U=lUl^{-1}$  for all $l \in L$. This implies that $U$ fixes $l(\xi)$ for all $l\in L$. Noting that $\xi$ is the unique fixed point of $U$, it holds that $l(\xi)=\xi$ for all $l\in L$. In other words, $L$ is contained in the stabilizer subgroup of $\xi$ in $\su$. However, the subgroup of $\su$ stabilizing $\xi$ is amenable and thus this contradicts to the fact that $L$ is not amenable. Therefore, $U$ is trivial.
\end{proof}

\begin{remark}
Lemma \ref{semisimple} actually works for any nonamenable connected almost Zariski closed subgroup of a rank $1$ semisimple Lie group. We will use Lemma \ref{semisimple} in the $\Sp$ case later.
\end{remark}

Let $A$ and $B$ be matrices of size $m\times n$ and $r \times s$ respectively. Then recall that the direct sum of $A$ and $B$, denoted by $A \oplus B$, is a matrix of size $(m+r)\times (n+s)$ defined as 
$$A \oplus B = \left[ \begin{array}{cccccc} a_{1,1} & \cdots & a_{1,n} & 0 & \cdots & 0 \\ \vdots & \ddots & \vdots & \vdots & \ddots & \vdots \\ a_{m,1} & \cdots & a_{m,n} & 0 & \cdots & 0 \\ 0 & \cdots & 0 & b_{1,1} & \cdots & b_{1,s} \\  \vdots & \ddots & \vdots & \vdots & \ddots & \vdots \\ 0 & \cdots & 0 & b_{r,1} & \cdots & b_{r,s} \end{array} \right].$$
Let $I_m$ denote the identity matrix of size $m$. Note that $\overline \Gamma^\circ$ is a connected, almost Zariski closed subgroup and so $\overline \Gamma^\circ$ admits the decomposition as in Theorem \ref{thm:structure}.

\begin{proposition}\label{L}
Let $\Gamma$ be a nonelementary discrete subgroup of $\su$ with real trace field.
Let $\overline \Gamma^\circ = (LT) \ltimes U$ be the decomposition as in Theorem \ref{thm:structure}. Then, the followings hold.
\begin{itemize}
\item The noncompact factor of $L$ is conjugate to either $I_{n-m} \oplus\mathrm{SO}(m,1)$ for $m\geq 2$ or $I_{n-1}\oplus\suone$,
\item $T$ is conjugate to a torus in $\mathrm{SO}(n-m) \oplus I_{m+1}$ if the noncompact factor of $L$ is conjugate to $I_{n-m} \oplus\mathrm{SO}(m,1)$ and $\mathrm{SO}(n-1) \oplus I_{2}$ if the noncompact factor of $L$ is conjugate to $I_{n-1} \oplus\mathrm{SU}(1,1)$.
\item $U$ is trivial.
\end{itemize}
\end{proposition}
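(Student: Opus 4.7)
The plan is to combine Theorem \ref{thm:structure} for almost Zariski closed subgroups with the real-trace constraint coming from Corollary \ref{Zreal}. The first move is to reduce to $H=\overline{\Gamma}^{\circ}$: since $\Gamma$ is nonelementary, $\Gamma$ is nonamenable, hence $\overline{\Gamma}$ is nonamenable (a closed subgroup of an amenable topological group is amenable), and therefore its finite-index identity component $\overline{\Gamma}^{\circ}$ is nonamenable. Applying Lemma \ref{semisimple} with $H=\overline{\Gamma}^{\circ}$ immediately yields that $U$ is trivial and that $L$ is a connected noncompact semisimple Lie subgroup of $\su$ of real rank one. Writing $L=L_{nc}L_{c}$, the noncompact factor $L_{nc}$ is a simple Lie group of real rank one.

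To identify $L_{nc}$, I invoke Corollary \ref{Zreal}: every element of $\overline{\Gamma}^{\circ}$, and in particular of $L_{nc}$, has real trace. Among the connected noncompact rank-one simple Lie groups---$\mathrm{SO}(k,1)^{\circ}$, $\mathrm{SU}(k,1)$, $\mathrm{Sp}(k,1)$, $F_{4}^{-20}$---the quaternionic and exceptional cases are excluded on signature grounds, since any preserved quaternionic Hermitian structure on $\mathbb{C}^{n+1}$ forces the complex Hermitian signature to be $(2p,2q)$, incompatible with $(n,1)$. For $\mathrm{SU}(k,1)$ with $k\geq 2$, I decompose $\mathbb{C}^{n+1}$ as the orthogonal direct sum of the fixed subspace of $L_{nc}$---which is positive definite, because a noncompact simple subgroup of $\su$ cannot fix a null or timelike line---and its orthogonal complement; on the complement $L_{nc}$ acts without fixed vectors, and a suitable diagonal element such as the one with eigenvalues $(i,i,-1)$ in an $\mathrm{SU}(2,1)$-factor has non-real trace in the full representation, contradicting the hypothesis. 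The only remaining possibilities are $L_{nc}\cong \mathrm{SO}(m,1)^{\circ}$ for some $m\geq 2$ or $L_{nc}\cong \suone$; the same fixed-subspace analysis then shows that, after conjugating in $\su$, $L_{nc}$ takes the standard block form $I_{n-m}\oplus \sok$ or $I_{n-1}\oplus \suone$.

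The analysis of $T$ uses Schur's lemma and the real-trace hypothesis. Because $L_{nc}$ acts absolutely irreducibly on the last $m+1$ (respectively, last two) coordinates, its centralizer in $\su$ consists of matrices $A\oplus \lambda I$ with $A\in \mathrm{U}(n-m)$, $|\lambda|=1$, and $\det(A)\lambda^{m+1}=1$; every $t\in T$ therefore has this form. Writing $t=A_{t}\oplus \lambda_{t}I$ and taking any $l\in L_{nc}$, commutativity yields $\operatorname{tr}(tl)=\operatorname{tr}(A_{t})+\lambda_{t}\operatorname{tr}(\tilde l)$, which must be real for every $l$. Since $\operatorname{tr}(\tilde l)$ takes infinitely many distinct real values over $L_{nc}$, this forces $\lambda_{t}\in \mathbb{R}$, hence $\lambda_{t}=\pm 1$, and connectedness of $T$ gives $\lambda_{t}=1$. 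Thus $T\subset \mathrm{SU}(n-m)\oplus I_{m+1}$. Applying the real-trace condition to all powers $t^{k}$ shows that the eigenvalues of $t$ on the first block are closed under complex conjugation, so the representation of $T$ on $\mathbb{C}^{n-m}$ is defined over $\mathbb{R}$; a further conjugation by a suitable element of $\mathrm{SU}(n-m)\oplus I_{m+1}$, which centralizes $L_{nc}$ and therefore does not disturb its standard form, places $T$ inside $\mathrm{SO}(n-m)\oplus I_{m+1}$ (respectively, $\mathrm{SO}(n-1)\oplus I_{2}$). Triviality of $U$ is already supplied by Lemma \ref{semisimple}.

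The hardest step is the classification of $L_{nc}$: ruling out $\mathrm{SU}(k,1)$ for $k\geq 2$ requires carefully combining the real-trace constraint with the signature and fixed-subspace structure of the embedding. Once $L_{nc}$ is pinned down, the analysis of $T$ reduces to a routine Schur-type computation together with the standard fact that a torus in $\mathrm{U}(N)$ with real character is $\mathrm{U}(N)$-conjugate into $\mathrm{SO}(N)$.
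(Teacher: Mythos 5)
Your overall architecture matches the paper's: reduce to $\overline\Gamma^{\circ}$, apply Lemma \ref{semisimple} to kill $U$ and force $L$ to be connected, noncompact, semisimple of real rank one, then use Corollary \ref{Zreal} to constrain $L_{nc}$ and $T$. Your treatment of $T$ is fine and in one respect more careful than the paper's: the Schur-type argument pinning down the scalar $\lambda_t$ on the noncompact block (via $\operatorname{tr}(tl)=\operatorname{tr}(A_t)+\lambda_t\operatorname{tr}(\tilde l)$) is an explicit justification of a step the paper only supplies later, in the proof of Theorem \ref{realtrace}; your "real traces of all powers" argument for conjugating $T$ into $\mathrm{SO}(n-m)$ is equivalent to the paper's power-sum computation with $\sum\sin(a_jt)=0$.

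The genuine gap is in your classification of $L_{nc}$. The paper does not argue representation-theoretically: it observes that the rank-one symmetric space $Y$ of $L$ embeds totally geodesically in $\hc$, invokes the classification of totally geodesic submanifolds of $\hc$ (only $\mathbf H_{\mathbb C}^m$ and $\mathbf H_{\mathbb R}^m$ occur), and then reads off that $L_{nc}$ lies in the stabilizer of $Y$ and hence, up to conjugacy, equals the standard block $I_{n-m}\oplus\sok$ or $I_{n-m}\oplus\mathrm{SU}(m,1)$; the trace condition then kills $\mathrm{SU}(m,1)$ for $m\geq 2$. Your route instead lists the abstract rank-one groups and excludes cases by hand, but two of your steps only address the \emph{standard} embedding: (i) the exclusion of $\mathrm{Sp}(k,1)$ and $F_4^{-20}$ "on signature grounds" assumes the ambient $\mathbb C^{n+1}$ carries an invariant quaternionic structure, which holds for the standard representation but not for an arbitrary embedding into $\su$; and (ii) testing the diagonal element with eigenvalues $(i,i,-1)$ "in an $\mathrm{SU}(2,1)$-factor" computes the trace in the standard representation, whereas a priori $L_{nc}\cong\mathrm{SU}(k,1)$ could act on the non-fixed part of $\mathbb C^{n+1}$ through some other irreducible representation preserving a form of signature $(p,1)$, and then that element's trace in the ambient matrix is not $2i-1$. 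Likewise your final claim that the fixed-subspace analysis alone puts $L_{nc}$ in standard block form is not justified. All of these issues are exactly what the totally-geodesic-submanifold classification resolves in one stroke, which is why the paper routes the argument through $Y$; to repair your proof you should either invoke that classification or separately classify the irreducible representations of rank-one groups admitting an invariant Hermitian form of signature $(p,1)$.
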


\begin{proof}
Since $\Gamma$ is nonelementary, $\overline \Gamma^\circ$ can not be amenable. Applying Lemma \ref{semisimple} to $\overline \Gamma^\circ$, it follows that $U$ is trivial and $L$ is a connected, noncompact, semisimple Lie group of real rank $1$.

To prove the first statement, note that every element of $\overline \Gamma^\circ$ has real trace according to Corollary \ref{Zreal}. Let $Y$ be the rank $1$ symmetric space associated with $L$. Then $Y$ is a totally geodesic submanifold of $\hc$. Due to the classification of totally geodesic submanifolds in $\hc$, $Y$ is isometric to either a totally complex geodesic $m$-submanifold $\mathbf{H}_{\mathbb C}^m$ or a totally real geodesic $m$-submanifold $\mathbf{H}_{\mathbb R}^m$ for some $1\leq m\leq n$. 

First suppose that $Y$ is isometric to $\hrk$. Then $L$ is contained in the stabilizer subgroup of $Y$ in $\su$. Hence by conjugation, we may assume that $L$ is contained in $\mathrm{SU}(n-m)\oplus \mathrm{SO}(m,1)$.
Let $L_c$ and $L_{nc}$ be the compact and noncompact factors of $L$ respectively. Then $L_c \subset \mathrm{SU}(n-m)$ and $L_{nc} \subset \mathrm{SO}(m,1)$. Note that the symmetric space associated with $L_{nc}$ is also $Y$. Hence $L_{nc}$ is a connected semisimple Lie group isogenous to $\mathrm{SO}(m,1)$. Noting that any connected semisimple Lie group is almost Zariski closed, it is easy to see that $L_{nc}$ is an almost Zariski closed subgroup of $\mathrm{SO}(m,1)$ of finite index. Since $\mathrm{SO}(m,1)$ is connected, it has no almost Zariski closed subgroups of finite index. Therefore $L_{nc}$ has to be equal to $\mathrm{SO}(m,1)$.
Recall that it is required that $L_{nc}$ is not amenable and every element of $L_{nc}$ has real trace.
If $m=1$, then $\mathrm{SU}(n-1)\oplus \mathrm{SO}(1,1)$ is amenable and thus $m \geq 2$. Since every element of $\mathrm{SO}(m,1)$ has real trace, $\mathrm{SO}(m,1)$ for $m\geq 2$ is a possible semisimple Lie group for $L_{nc}$.

Next we suppose that $Y$ is isometric to $\hck$. As the previous case, it can be easily seen that $L$ is contained in $\mathrm{SU}(n-m)\oplus \mathrm{SU}(m,1)$ and $L_{nc}=\mathrm{SU}(m,1)$ by conjugation.
Since every element of $L_{nc}$ must have real trace, the trace of every element of $\mathrm{SU}(m,1)$ has to be a real number. 
This is possible only when $m=1$. Thus $L_{nc}$ is conjugate to $\suone$.

Now the second statement only remains. Since $L$ and $T$ centralize each other, $T$ also stabilizes the symmetric space $Y$. 
Hence $T$ is contained in either $\mathrm{SU}(n-m)$ by conjugation. We set $r=n-m$. Since any torus is contained in a maximal torus, by conjugation we assume that $T$ is a torus contained in the maximal torus $T_{max}$ defined by
$$T_{max}=\{e^{i\theta_1}\oplus \cdots \oplus e^{i\theta_r} \ | \ e^{i\theta_1} \cdots e^{i\theta_r}=1, \ \forall i, \ \theta_i \in \mathbb R\}.$$ 
Let $S$ be an one-dimensional torus in $T$. Then $S$ can be written as 
$$S=\{e^{ia_1t}\oplus \cdots \oplus e^{ia_rt} \ | \ e^{ia_1t} \cdots e^{ia_rt}=1, \ t \in \mathbb R\}$$ for some $(a_1,\ldots,a_r) \in \mathbb R^r$. In order that every element of $S$ has real trace, for all $t\in \mathbb R$ it holds that
$$ \sin a_1t + \cdots + \sin a_rt =0.$$
Differentiating both sides repeatedly with respect to $t$ and subsitituting $t=0$, it is easy to see that for all integer numbers $s\geq 0$, 
\begin{eqnarray}\label{eqnsol} a_1^{2s+1}+\cdots +a_{r}^{2s+1}=0.\end{eqnarray}
It is not difficult to see that up to ordering, any solution for (\ref{eqnsol}) is of the form $$(a_1,-a_1,\ldots,a_k,-a_k,0,\ldots,0).$$
Thus $S=\{R(a_1t) \oplus \cdots \oplus R(a_k t) \oplus I_{r-2k} \ | \ t \in \mathbb R\}$ where $R(\theta ) = e^{i\theta }\oplus e^{-i\theta}$. 

By a similar way to one-dimensional torus case, it can be shown that any torus of $T_{max}$ in which every element has  real trace is of the form
$$S_1\oplus \cdots \oplus S_l$$
where $S_i$ is an one-dimensional torus of the form $R(a_1t) \oplus \cdots \oplus R(a_k t)$.

Noting that $$\left[ \begin{array}{cc} i & i \\ 1 & -1 \end{array} \right] \left[ \begin{array}{cc} e^{i\theta} & 0 \\ 0 & e^{-i\theta}  \end{array} \right] \left[ \begin{array}{cc} i & i \\ 1 & -1 \end{array} \right]^{-1} = \left[ \begin{array}{rr} \cos \theta & -\sin \theta \\ \sin \theta & \cos \theta \end{array} \right],$$
it easily follows that $T$ is conjugate to a torus in $\mathrm{SO}(r)$.
\end{proof}

\begin{remark}
In the proof of Proposition \ref{L}, we use the fact that the trace is invariant under conjugation in $\mathrm{GL}(n,\mathbb C)$. However the usual definition of trace is not invariant under conjugation in $\mathrm{GL}(n,\mathbb H)$. Hence the proof as in Proposition \ref{L} is not available in the case of $\Sp$. 
\end{remark}

\begin{theorem}\label{realtrace}
Let $\Gamma$ be a nonelementary discrete subgroup of $\su$ with real trace field. Then $\Gamma$ is conjugate to a subgroup of either $\mathrm{S}(\mathrm{U}(n-m)\oplus \mathrm{O}(m,1))$ for $m\geq 2$ or $\mathrm{SU}(n-1) \oplus \mathrm{SU}(1,1)$.
\end{theorem}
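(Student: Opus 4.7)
The plan is to combine Proposition~\ref{L}, which controls the identity component $\overline{\Gamma}^\circ$, with a normalizer argument that extends the inclusion from $\overline{\Gamma}^\circ$ to the full Zariski closure $\overline{\Gamma}$, and then to use Corollary~\ref{Zreal} once more to remove a residual phase ambiguity.

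First I would check that $\overline{\Gamma}^\circ$ already sits inside the claimed subgroup. By condition (3) of Theorem~\ref{thm:structure}, the compact factor $L_c$ commutes with $L_{nc}$, and the centralizer of $L_{nc}=I_{n-m}\oplus\mathrm{SO}(m,1)$ (respectively $I_{n-1}\oplus\mathrm{SU}(1,1)$) in $\mathrm{SU}(n,1)$ is $\{A\oplus\lambda I_{m+1}:A\in\mathrm{U}(n-m),\ \lambda\in\mathrm{U}(1),\ \det(A)\lambda^{m+1}=1\}$ (respectively the analogous group), because the standard representation of $L_{nc}$ on the relevant block is $\mathbb{C}$-irreducible. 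Since $L_c$ is semisimple, its image under the $\lambda$-projection to the abelian $\mathrm{U}(1)$-factor is trivial, so $L_c\subset\mathrm{SU}(n-m)\oplus I_{m+1}$ (respectively $\mathrm{SU}(n-1)\oplus I_2$). Together with the inclusions for $L_{nc}$ and $T$ given in Proposition~\ref{L}, this yields $\overline{\Gamma}^\circ\subset\mathrm{S}(\mathrm{U}(n-m)\oplus\mathrm{O}(m,1))$ in the first case and $\overline{\Gamma}^\circ\subset\mathrm{SU}(n-1)\oplus\mathrm{SU}(1,1)$ in the second.

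Next I would show every $g\in\overline{\Gamma}$, not only those in the identity component, lies in the target subgroup. The key point is that $L_{nc}$ is characteristic in $\overline{\Gamma}^\circ$: it is the product of the noncompact simple factors of the derived group $[\overline{\Gamma}^\circ,\overline{\Gamma}^\circ]=L$, and continuous automorphisms of $L$ permute simple factors while preserving compactness. Since $\overline{\Gamma}^\circ$ is normal in $\overline{\Gamma}$, this makes $L_{nc}$ normal in $\overline{\Gamma}$, so every $g\in\overline{\Gamma}$ normalizes $L_{nc}$ in $\mathrm{SU}(n,1)$. Uniqueness of the totally geodesic $L_{nc}$-invariant submanifold on which $L_{nc}$ acts transitively identifies this normalizer with the setwise stabilizer of that submanifold, namely $\mathbf{H}_{\mathbb R}^m$ in the first case and $\mathbf{H}_{\mathbb C}^1$ in the second. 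A direct block computation then shows $g=A\oplus D$ with $A\in\mathrm{U}(n-m)$ and either $D=\lambda D_0$ (with $\lambda\in\mathrm{U}(1)$, $D_0\in\mathrm{O}(m,1)$) or $D=\mu B_0$ (with $\mu\in\mathrm{U}(1)$, $B_0\in\mathrm{SU}(1,1)$), according to the case.

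Finally I would remove the scalar $\lambda$ or $\mu$ using the real-trace hypothesis. For any $k=I_{n-m}\oplus h\in L_{nc}$ (respectively $I_{n-1}\oplus h$), the product $gk\in\overline{\Gamma}$ has real trace by Corollary~\ref{Zreal}, giving $\mathrm{tr}(A)+\lambda\mathrm{tr}(D_0h)\in\mathbb R$ for every $h\in\mathrm{SO}(m,1)$, and analogously $\mathrm{tr}(A)+\mu\mathrm{tr}(B_0h)\in\mathbb R$ for every $h\in\mathrm{SU}(1,1)$. The quantities $\mathrm{tr}(D_0h)$ and $\mathrm{tr}(B_0h)$ are real (since $D_0h$ has real entries and $\mathrm{SU}(1,1)\cong\mathrm{SL}(2,\mathbb R)$ has real traces) and take at least two distinct values as $h$ varies, so subtracting two instances forces $\lambda,\mu\in\mathbb R\cap\mathrm{U}(1)=\{\pm1\}$. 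Absorbing this sign into $D_0$ or $B_0$ and using the determinant condition $\det(A)\det(D)=1$ places $g$ in $\mathrm{S}(\mathrm{U}(n-m)\oplus\mathrm{O}(m,1))$ or $\mathrm{SU}(n-1)\oplus\mathrm{SU}(1,1)$, as required. I expect the main hurdle to be the normalizer/stabilizer computation: it requires both the uniqueness of the $L_{nc}$-invariant totally geodesic submanifold and an explicit description of its stabilizer in $\mathrm{SU}(n,1)$; once these are in hand, the real-trace hypothesis cleanly eliminates the residual $\mathrm{U}(1)$-phase.
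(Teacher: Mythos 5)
Your proposal is correct and follows essentially the same route as the paper: Proposition~\ref{L} controls $\overline{\Gamma}^\circ$, normality of $\overline{\Gamma}^\circ$ in $\overline{\Gamma}$ together with uniqueness of the $L_{nc}$-invariant totally geodesic submanifold $Y$ forces all of $\overline{\Gamma}$ to stabilize $Y$, and Corollary~\ref{Zreal} applied to products $gk$ with $k\in L_{nc}$ kills the residual $\mathrm{U}(1)$-phase. The only (minor) difference is that you run the phase-elimination argument in the $\mathbf{H}^m_{\mathbb R}$ case as well, where the paper simply identifies the stabilizer of $\mathbf{H}^m_{\mathbb R}$ with $\mathrm{S}(\mathrm{U}(n-m)\oplus\mathrm{O}(m,1))$ even though that stabilizer actually contains extra unit scalars on the $\mathrm{O}(m,1)$-block; your extra step closes that small gap.
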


\begin{proof}
The Zariski closure $\overline \Gamma$ of $\Gamma$ is a Lie group with finitely many components. Hence its identity component $\overline \Gamma^\circ$ is a finite index normal subgroup of $\overline \Gamma$ and moreover $\overline \Gamma$ can be written as $$\overline \Gamma = \bigcup_{k=1}^r \gamma_k \overline \Gamma^\circ.$$

According to Proposition \ref{L}, $\overline \Gamma^\circ$ stabilizes a totally geodesic submanifold $Y$ of $\hc$ which is isometric to either $\hrk$ for $m\geq 2$ or $\mathbf{H}^1_{\mathbb C}$. Since $\overline \Gamma^\circ$ is a normal subgroup of $\overline \Gamma$, we have $$\overline \Gamma^\circ = \gamma_k \overline \Gamma^\circ \gamma_k^{-1}$$ for all $\gamma_k$. This means that $\overline \Gamma^\circ$ stabilizes $\gamma_k(Y)$. Since $\overline \Gamma^\circ$ can not stabilize two distinct copies of $Y$, we have $\gamma_k(Y)=Y$, that is, $\gamma_k$ also stabilizes $Y$ for all $k=1,\ldots,r$ and so does $\overline \Gamma$. 

In the case that $Y$ is isometric to $\hrk$, the stabilizer group of $Y$ in $\su$ is conjugate to $\mathrm{S}(\mathrm{U}(n-m)\oplus \mathrm{O}(m,1))$. Hence $\Gamma \subset \mathrm{S}(\mathrm{U}(n-m)\oplus \mathrm{O}(m,1))$ by conjugation.
If $Y$ is isometric to $\mathbf{H}^1_{\mathbb C}$, the stabilizer group of $Y$ in $\su$ is conjugate to $\mathrm{S}(\mathrm{U}(n-1)\oplus \mathrm{U}(1,1))$ and so $\gamma_i \in \mathrm{S}(\mathrm{U}(n-1)\oplus \mathrm{U}(1,1))$ by conjugation.
Write $\gamma_k=\gamma_k^c \oplus \gamma_k^{nc}$ for $\gamma_k^c \in \mathrm{U}(n-1)$ and $\gamma_k^{nc} \in \mathrm{U}(1,1)$. 
As shown in the proof of Proposition \ref{L}, $L_{nc}=I_{n-1}\oplus \mathrm{SU}(1,1) \subset \overline \Gamma^\circ$. Hence the trace of every element of $\gamma_k (I_{n-1}\oplus \mathrm{SU}(1,1))$ must be a real number. Let $\gamma_k^{nc} \in e^{i\theta_k} \suone$. Then we have 
$$ tr(\gamma_k^c) + t e^{i\theta_k} \in \mathbb R \text{ for all } t \in [-2,2].$$
This implies that $tr(\gamma_k^c) \in \mathbb R$ and $e^{i\theta_k} \in \mathbb R$ i.e. $e^{i\theta_k} =\pm 1$. Therefore we can conclude that $\gamma_k^{nc} \in \suone$ and $\gamma_k^c \in \mathrm{SU}(n-1)$. This completes the proof.
\end{proof}

Now we are ready to prove Theorem \ref{thm:1.1}

\begin{proof}[Proof of Theorem \ref{thm:1.1}]
According to Theorem \ref{realtrace}, $\Gamma$ preserves a totally geodesic submanifold which is isometric to $\mathbf H_{\mathbb R}^m$ for some $m\geq 2$ or $\mathbf H_{\mathbb C}^1$. These totally geodesic submanifolds have constant negative sectional curvature. Therefore Theorem \ref{thm:1.1} immediately follows.
\end{proof}

In particular when $\Gamma$ is irreducible, the possible Lie group for $\overline \Gamma^\circ$ is only $\mathrm{SO}(n,1)$. Hence Theorem \ref{thm:1.6} follows in the $\su$ case.

\begin{theorem}
Let $\Gamma$ be an irreducible discrete subgroup of $\su$. Then the trace field of $\Gamma$ is real if and only if $\Gamma$ is conjugate to a Zariski dense discrete subgroup of $\mathrm{SO}(n,1)$.
\end{theorem}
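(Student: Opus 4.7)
The plan is to combine Theorem \ref{realtrace} with Proposition \ref{L} and use irreducibility to eliminate every possibility other than a Zariski dense subgroup of $\so$. The backward direction is immediate: if $\Gamma$ is conjugate inside $\su$ to a subgroup of $\so \subset \mathrm{GL}(n+1,\R)$, then every element of $\Gamma$ has real trace, and since trace is invariant under conjugation in $\su$, the trace field of $\Gamma$ is automatically contained in $\R$.

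For the forward direction, I would first invoke Theorem \ref{realtrace} to conjugate $\Gamma$ into either $\mathrm{S}(\mathrm{U}(n-m)\oplus\mathrm{O}(m,1))$ for some $m\geq 2$ or $\mathrm{SU}(n-1)\oplus\suone$. Both ambient groups preserve a proper coordinate decomposition $\C^{n+1}=\C^{k}\oplus\C^{n+1-k}$ with $k=n-m$ or $k=n-1$, so irreducibility of $\Gamma$ on $\C^{n+1}$ forces $k=0$. This rules out the second alternative (which would otherwise require $n=1$, the case already covered by Maskit's classical theorem), and in the first alternative it forces $m=n$, placing $\Gamma$ inside $\mathrm{S}(\mathrm{U}(0)\oplus\mathrm{O}(n,1))=\so$.

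It remains to upgrade this to Zariski density. Applying Proposition \ref{L} to $\overline{\Gamma}^{\circ}$ with $m=n$, we see that the noncompact factor $L_{nc}$ is conjugate to $I_0\oplus\so=\so$, while $L_c$, $T$, and $U$ are all forced to be trivial (they land in $\mathrm{SU}(0)$, $\mathrm{SO}(0)\oplus I_{n+1}$, and the unipotent radical respectively). Thus $\overline{\Gamma}^{\circ}$ coincides with the identity component $\mathrm{SO}^{+}(n,1)$ of $\so$. Since $\mathrm{SO}^{+}(n,1)$ is almost Zariski closed with Zariski closure equal to $\so$, and $\overline{\Gamma}$ is itself Zariski closed in the ambient $\SL$, we obtain $\overline{\Gamma}\supseteq\so$; combined with the inclusion $\overline{\Gamma}\subseteq\so$ (coming from $\Gamma\subseteq\so$ and the Zariski closedness of $\so$), this yields $\overline{\Gamma}=\so$, i.e.\ $\Gamma$ is Zariski dense in $\so$.

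The main obstacle I anticipate is the final Zariski density step: one must carefully distinguish $\so$ (a Zariski closed real algebraic group with two Euclidean connected components) from its identity component $\mathrm{SO}^{+}(n,1)$, since $\overline{\Gamma}^{\circ}$ is connected as a Lie group, and then confirm that $\mathrm{SO}^{+}(n,1)$ is almost Zariski closed in $\SL$ with the full $\so$ as its Zariski closure. Once this bookkeeping is handled, the irreducibility reduction and the direct appeal to the structural output of Proposition \ref{L} are, by contrast, quite routine.
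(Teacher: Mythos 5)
Your argument is correct and is essentially the paper's own (the paper derives this theorem in one line from Theorem \ref{realtrace} and Proposition \ref{L}, exactly as you do: irreducibility kills the $\mathrm{SU}(n-1)\oplus\suone$ case and forces $m=n$, and the structure of $\overline{\Gamma}^{\circ}$ then gives Zariski density in $\so$). Your added bookkeeping distinguishing $\mathrm{SO}^{+}(n,1)$ from $\so$ and checking that the former has Zariski closure $\so$ is a point the paper glosses over (it even asserts $\so$ is connected), so your version is, if anything, slightly more careful.
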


In the case of $n=2$, we obtain a stronger version of the Fu-Li-Wang's theorem in \cite{FLW12} as a Corollary.

\begin{corollary}\label{cor1}
Let $\Gamma$ be a nonelementary discrete subgroup of $\mathrm{SU}(2,1)$. Then the trace field of $\Gamma$ is real if and only if $\Gamma$ is conjugate to a subgroup of either $\mathrm{SO}(2,1)$ or $1 \oplus\mathrm{SU}(1,1)$.
\end{corollary}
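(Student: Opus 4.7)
The plan is to deduce this corollary almost directly from Theorem \ref{realtrace} specialized to $n=2$, supplemented by a short check of the converse direction.

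For the forward implication, I would apply Theorem \ref{realtrace} with $n=2$. The theorem says that $\Gamma$ is conjugate into $\mathrm{S}(\mathrm{U}(n-m)\oplus \mathrm{O}(m,1))$ for some $m\geq 2$ with $m\leq n$, or into $\mathrm{SU}(n-1)\oplus \mathrm{SU}(1,1)$. When $n=2$, the only admissible value in the first case is $m=2$, which forces the compact factor $\mathrm{U}(n-m)$ to be trivial. The resulting group $\mathrm{S}(1\oplus \mathrm{O}(2,1))$ is precisely the determinant-one part of $\mathrm{O}(2,1)$, namely $\mathrm{SO}(2,1)$. In the second case we get $\mathrm{SU}(1)\oplus \mathrm{SU}(1,1)=1\oplus \mathrm{SU}(1,1)$, and the two alternatives listed in the corollary are recovered.

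For the reverse implication I need to verify that every element of $\mathrm{SO}(2,1)$ and every element of $1\oplus \mathrm{SU}(1,1)$ has real trace. The first is trivial: real matrices have real trace. For the second, any $A\in \mathrm{SU}(1,1)$ has the form $\bigl(\begin{smallmatrix} a & b \\ \bar b & \bar a \end{smallmatrix}\bigr)$ with $|a|^2-|b|^2=1$, so $\mathrm{tr}(A)=a+\bar a\in \mathbb R$; consequently $\mathrm{tr}(1\oplus A)=1+\mathrm{tr}(A)\in \mathbb R$. Taking products and sums of such traces keeps everything inside $\mathbb R$, so the trace field of the entire group is real, and hence so is the trace field of any subgroup $\Gamma$.

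There is no real obstacle here; the corollary is essentially a bookkeeping exercise extracting the $n=2$ case from Theorem \ref{realtrace}. The only point worth stating carefully is the identification $\mathrm{S}(\mathrm{U}(0)\oplus \mathrm{O}(2,1))=\mathrm{SO}(2,1)$, since $\mathrm{U}(0)$ is the trivial group and the determinant-one condition kills the orientation-reversing component of $\mathrm{O}(2,1)$. Once this is noted, the proof is a single paragraph combining the two directions above.
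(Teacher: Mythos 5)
Your argument is correct and follows the paper's own proof essentially verbatim: specialize Theorem \ref{realtrace} to $n=2$, note that the compact factor degenerates (so $\mathrm{S}(\mathrm{U}(0)\oplus\mathrm{O}(2,1))=\mathrm{SO}(2,1)$ and $\mathrm{SU}(1)=\{1\}$), and observe that the converse holds because elements of $\mathrm{SO}(2,1)$ and $1\oplus\mathrm{SU}(1,1)$ visibly have real trace. The only difference is that you spell out the converse, which the paper dismisses as trivial.
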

\begin{proof}
It follows from Theorem \ref{realtrace} that $\Gamma$ is conjugate to a subgroup of $\mathrm{SO}(2,1)$ or $\mathrm{SU}(1) \oplus\mathrm{SU}(1,1)$. Since $\mathrm{SU}(1)=\{1\}$, the corollary immediately follows. The converse is trivial.
\end{proof}

Note that $\mathrm{SU}(2)$ is the following group:
$$\mathrm{SU}(2)= \left\{ \left[\begin{array}{rr} \alpha & - \bar \beta \\ \beta &\bar \alpha \end{array} \right] \bigg| \ \alpha, \beta \in \mathbb C, \ |\alpha|^2+|\beta|^2=1 \right\}.$$ 
Hence every element of $\mathrm{SU}(2)$ has real trace. From this fact, when $n=3$, we also get the result in \cite{KK14} as a Corollary.

\begin{corollary}\label{cor2}
Let $\Gamma$ be a nonelementary discrete subgroup of $\mathrm{SU}(3,1)$. Then the trace field of $\Gamma$ is real if and only if $\Gamma$ is conjugate to a subgroup of either $\mathrm{SO}(3,1)$ or $\mathrm{SU}(2)\oplus\mathrm{SU}(1,1)$.
\end{corollary}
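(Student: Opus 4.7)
The plan is to invoke Theorem \ref{realtrace} in the special case $n=3$ and then carry out a small case analysis to collapse the possibilities down to the two subgroups listed in the corollary.

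First I would apply Theorem \ref{realtrace} with $n=3$. It yields three a priori possibilities up to conjugation: $\mathrm{S}(\mathrm{U}(1)\oplus \mathrm{O}(2,1))$ (corresponding to $m=2$), $\mathrm{SO}(3,1)$ (corresponding to $m=3$), or $\mathrm{SU}(2)\oplus \mathrm{SU}(1,1)$. The main task is to show that the $m=2$ case is already contained in the $\mathrm{SO}(3,1)$ case. For this I would unpack the determinant condition: an element of $\mathrm{S}(\mathrm{U}(1)\oplus \mathrm{O}(2,1))$ has the form $e^{i\theta}\oplus A$ with $A\in \mathrm{O}(2,1)$ and $e^{i\theta}\det(A)=1$, so the possibilities are either $1\oplus A$ with $A\in \mathrm{SO}(2,1)$ or $-1\oplus A$ with $\det(A)=-1$. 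In both cases the matrix is real, preserves $I_{3,1}$, and has determinant $1$, so it lies in $\mathrm{SO}(3,1)$. Hence every subgroup of $\mathrm{S}(\mathrm{U}(1)\oplus \mathrm{O}(2,1))$ is already a subgroup of $\mathrm{SO}(3,1)$, and the forward direction follows.

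For the converse I would argue directly that each of the two listed groups consists entirely of matrices with real trace. For $\mathrm{SO}(3,1)$ this is immediate since all entries are real. For $\mathrm{SU}(2)\oplus \mathrm{SU}(1,1)$, the trace of a sum $A\oplus B$ equals $\mathrm{tr}(A)+\mathrm{tr}(B)$; the explicit description of $\mathrm{SU}(2)$ recalled just before the corollary shows that every element of $\mathrm{SU}(2)$ has real trace $\alpha+\bar\alpha$, and every element of $\mathrm{SU}(1,1)$ has real trace (as already used earlier in the proof of Theorem \ref{realtrace}, and as follows from $\mathrm{SU}(1,1)$ being conjugate to $\mathrm{SL}(2,\mathbb R)$). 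Adding these two real numbers gives a real trace for every element, so the trace field of any subgroup is real.

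There is no serious obstacle here: the previous theorems do all the heavy lifting. The only point that requires care is verifying that $\mathrm{S}(\mathrm{U}(1)\oplus \mathrm{O}(2,1))$ genuinely embeds into $\mathrm{SO}(3,1)$ as a set of real matrices (and is not merely abstractly isomorphic to a subgroup of it), which is why I would spell out the two cases $\det(A)=\pm 1$ explicitly rather than quote a stabilizer computation.
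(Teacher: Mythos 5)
Your proposal is correct and follows essentially the same route as the paper: apply Theorem \ref{realtrace} with $n=3$, observe via the determinant condition that $\mathrm{S}(\mathrm{U}(1)\oplus \mathrm{O}(2,1))=\mathrm{S}(\mathrm{O}(1)\oplus \mathrm{O}(2,1))\subset \mathrm{SO}(3,1)$, and verify the converse from the reality of traces in $\mathrm{SU}(2)$ and $\mathrm{SU}(1,1)$. Your explicit case split on $\det(A)=\pm 1$ just spells out the step the paper compresses into one sentence.
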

\begin{proof}
Applying Theorem \ref{realtrace} to the case of $n=3$, it follows that $\Gamma$ is conjugate to either $\mathrm{SO}(3,1)$ or
$\mathrm{S}(\mathrm{U}(1)\oplus \mathrm{O}(2,1))$ or $\mathrm{SU}(2)\oplus\mathrm{SU}(1,1)$.
Since the determinant of every element of $\mathrm{O}(2,1)$ is $\pm 1$, it can be easily seen that
$$\mathrm{S}(\mathrm{U}(1)\oplus \mathrm{O}(2,1))=\mathrm{S}(\mathrm{O}(1)\oplus \mathrm{O}(2,1)) \subset \mathrm{SO}(3,1).$$ 
Thus $\Gamma$ is conjugate to a subgroup of either $\mathrm{SO}(3,1)$ or $\mathrm{SU}(2)\oplus\mathrm{SU}(1,1)$.
As observed above, since the trace of every element of $\mathrm{SU}(2)$ is a real number, the converse clearly holds.
\end{proof}

\section{Quaternionic hyperbolic Kleinian groups}

As seen in the previous section, the trace field is a useful tool in recognizing subgroups of $\su$ which stabilize a totally geodesic submanifold of constant negative sectional curvature.
The question is naturally raised whether it works in the setting of $\Sp$ or not.
The main difficulty in extending the argument in the $\su$ case to $\Sp$ is that the trace is not invariant under conjugation in $\Sp$. This is due to the noncommutativity of the division ring $\mathbb H$ of quaternions. In the $\su$ case, if the trace field of a subgroup $\Gamma$ of $\su$ is not real, then the trace field of any subgroup conjugate to $\Gamma$ is not real either since the trace field is invariant under conjugation. However, this does not work in $\Sp$. Even if the set of traces of a subgroup $\Gamma$ of $\Sp$ is not real, it is possible that the trace field of some subgroup conjugate to $\Gamma$ can be real. This is the main difference between $\su$ and $\Sp$. We will give such example in Section \ref{sec:sp11}
Nonetheless, Kim \cite{Kim13} gives a positive answer for $\mathrm{Sp}(2,1)$ as follows.
\begin{theorem}[Kim]
Let $\Gamma < \mathrm{Sp}(2,1)$ be a nonelementary quaternionic hyperbolic Kleinian group containing a loxodromic element fixing $0$ and $\infty$. Assume that the sum of diagonal entries of each element of $\Gamma$ is real. Then $\Gamma$ stabilizes a copy of either $\mathbf{H}_{\mathbb R}^2$ or $\mathbf{H}_{\mathbb C}^1$.
\end{theorem}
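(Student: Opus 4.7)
The plan is to exploit the diagonal form afforded by the loxodromic element $\gamma_0$ fixing $0$ and $\infty$, which gives a distinguished basis of $\mathbb{H}^{2,1}$ in which the basis-dependent sum-of-diagonal function becomes algebraically informative despite not being conjugation-invariant on $\mathrm{Sp}(2,1)$.

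First, in a basis adapted to $\gamma_0$ (passing to Siegel-domain coordinates if needed), one may write
\[
\gamma_0 \;=\; \mathrm{diag}(\lambda,\mu,\bar\lambda^{-1}),
\]
where $\lambda\in\mathbb{H}$ satisfies $|\lambda|\ne 1$ and $\mu$ is a unit quaternion. After a preliminary conjugation by a unit quaternion we may further assume $\lambda$ lies in a fixed complex subfield $\mathbb{C}_\lambda\subset\mathbb{H}$. For any $\gamma=(g_{ij})\in\Gamma$, the sum of diagonal entries of $\gamma\gamma_0^n$ is
\[
g_{11}\lambda^n+g_{22}\mu^n+g_{33}\bar\lambda^{-n},
\]
which must be real for every $n\in\mathbb{Z}$. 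After possibly replacing $\gamma_0$ by a power so that the three sequences $\{\lambda^n\}$, $\{\mu^n\}$, $\{\bar\lambda^{-n}\}$ are in general position over $\mathbb{R}$, a Vandermonde-style argument applied to the imaginary parts separates the three contributions and forces each $g_{ii}$ to lie in a specific real subspace of $\mathbb{H}$ determined by the corresponding eigenvalue.

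Next, for the off-diagonal entries I would examine diagonal sums of $\gamma_0^a\gamma\gamma_0^b$ as $a,b\in\mathbb{Z}$ vary. Since $\gamma_0^a\gamma\gamma_0^b$ has $(i,j)$-entry $\zeta_i^a g_{ij}\zeta_j^b$, where $\zeta_i$ is the $i$-th diagonal entry of $\gamma_0$, combining these identities with the real-diagonal-sum hypothesis and the same separation argument constrains each $g_{ij}$ to lie in a single commutative subfield $\mathbb{F}\subset\mathbb{H}$. Using that $\Gamma$ is nonelementary, so its entries generate a subalgebra with enough dynamical richness to rule out degenerate configurations, the resulting $\mathbb{F}$ is isomorphic to either $\mathbb{R}$ or $\mathbb{C}$. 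In the former case $\Gamma\subset\mathrm{O}(2,1)$ preserves a copy of $\mathbf{H}_{\mathbb{R}}^2$, and in the latter $\Gamma\subset\mathrm{U}(1,1)$ preserves a copy of $\mathbf{H}_{\mathbb{C}}^1$.

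The main obstacle I anticipate is the noncommutativity of $\mathbb{H}$: the powers $\lambda^n$, $\mu^n$, $\bar\lambda^{-n}$ in general live in distinct complex subfields, so the Vandermonde-style separation must be carried out within a common $\mathbb{R}$-basis of $\mathbb{H}$ rather than naively treating the sums as complex. One must pick $\gamma_0$ (or pass to a suitable power) so that the relevant progressions are $\mathbb{R}$-linearly independent in the appropriate sense, and then verify that nonelementarity of $\Gamma$ prevents the extracted subfield $\mathbb{F}$ from collapsing to something smaller than what is needed to yield a genuine totally geodesic submanifold of the claimed type.
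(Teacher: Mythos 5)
First, note that the paper does not prove this statement at all: it is quoted verbatim from Kim's earlier article \cite{Kim13} as motivation, and the authors then prove a strictly stronger result for $\mathrm{Sp}(2,1)$ (Theorem \ref{sp21thm}, with no hypothesis about a loxodromic element fixing $0$ and $\infty$) by an entirely different route, namely passing to the Zariski closure, decomposing its identity component as $LT$ via the structure theorem, and excluding $\mathrm{Sp}(2,1)$, all conjugates of $1\oplus\mathrm{Sp}(1,1)$, and all conjugates of $\mathrm{SU}(2,1)$ by signature arguments on quadratic forms. Your proposal instead tries to reconstruct Kim's original matrix-computation proof, which is a legitimate thing to attempt, but as written it has a genuine gap at its central step. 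The diagonal sum of $\gamma_0^a\gamma\gamma_0^b$ is $\sum_i \zeta_i^a g_{ii}\zeta_i^b$: since left and right multiplication by a diagonal matrix only rescales rows and columns, the off-diagonal entries $g_{ij}$ ($i\neq j$) never appear in any of these diagonal sums. So the family of identities you propose carries no information whatsoever about the off-diagonal entries, and the claim that it ``constrains each $g_{ij}$ to lie in a single commutative subfield'' is unsupported. To reach the off-diagonal entries one must take traces of products of two genuinely different group elements, e.g.\ $\mathrm{tr}(\gamma\delta)=\sum_{i,k}g_{ik}h_{ki}$ (possibly interleaved with powers of $\gamma_0$), which is exactly where the real work in the matrix-computation proofs of \cite{CG,FLW12,KK14,Kim13} lies; your sketch omits this entirely.

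A second, smaller gap is the final step. Even if you succeed in showing that all matrix entries of $\Gamma$ lie in a common commutative subfield $\mathbb{F}\subset\mathbb{H}$, the case $\mathbb{F}\cong\mathbb{C}$ gives only $\Gamma\subset\mathrm{Sp}(2,1)\cap M_3(\mathbb{F})\cong\mathrm{U}(2,1)$, which preserves a copy of $\mathbf{H}^2_{\mathbb{C}}$ --- not $\mathrm{U}(1,1)$ and not $\mathbf{H}^1_{\mathbb{C}}$ as you assert. You would still need to invoke the complex-hyperbolic real-trace theorem (Cunha--Gusevskii, Fu--Li--Wang, or Corollary \ref{cor1} of this paper) to descend from $\mathbf{H}^2_{\mathbb{C}}$ to $\mathbf{H}^2_{\mathbb{R}}$ or $\mathbf{H}^1_{\mathbb{C}}$. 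The diagonal-entry separation via growth rates of $\lambda^n$ versus the bounded sequence $\mu^n$ is fine in outline, but without a correct mechanism for the off-diagonal entries and the final reduction, the argument does not close.
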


In accordance with his result, one can expect that the usual definition of trace in $\Sp$ is also useful in recognizing subgroups of $\Sp$ which preserve some specific totally geodesic submanifold.

Throughtout this section, $\Gamma$ denote a nonelementary discrete subgroup of $\Sp$ with real trace field and we will stick to the notation used in Lemma \ref{qualemma1} and denote by $L_{nc}$ the noncompact factor of $L$.

Similarly to the $\su$ case, we define a subset $\mathcal R_{sp}$ of $\Sp$ by  $$\mathcal R_{sp} = \{g\in \Sp \ | \ tr(g) \in \mathbb R\}.$$ It is a standard fact that $\Sp$ can be embedded in $\mathrm{SL}(4n, \mathbb R)$ by identifying $\mathbb H^n$ with $\mathbb R^{4n}$. More precisely, a quaternion $a+bi+cj+dk$ can be written as a $4\times 4$ real matrix, $$\left[ \begin{array}{rrrr} a & b & c & d \\ -b & a & -d & c \\ -c & d & a & -b \\ -d & -c & b & a \end{array} \right].$$
By a similar proof of Lemma \ref{lem:realtrace}, it follows that $\mathcal R_{sp}$ is Zariski closed in $\mathrm{SL}(4n,\mathbb R)$. Furthermore it is not difficult to see that Corollary \ref{Zreal} and Lemma \ref{semisimple} also work in the setting of $\Sp$ by following their proofs in the $\su$ case. Hence we have

\begin{lemma}\label{qualemma1}
Let $\Gamma$ be a nonelementary discrete subgroup of $\Sp$ with real trace field. Then every element of the Zariski closure $\overline \Gamma$ of $\Gamma$ also has real trace. Furthermore there exist a connected real rank $1$ semisimple subgroup $L$ of $\overline \Gamma^\circ$ and a torus $T$ in $\overline \Gamma^\circ$ such that $L$ and $T$ centralize each other and have finite intersection, and $\overline \Gamma^\circ=LT$.
\end{lemma}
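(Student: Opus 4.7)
The plan is to transplant the chain of reasoning from Lemma \ref{lem:realtrace} through Lemma \ref{semisimple} in the $\su$ setting to $\Sp$, using the standard embedding $\phi: \Sp \hookrightarrow \mathrm{SL}(4(n+1),\mathbb{R})$ obtained by realizing each quaternion $a+bi+cj+dk$ as the $4 \times 4$ real matrix described in the text and identifying $\mathbb{H}^{n+1}$ with $\mathbb{R}^{4(n+1)}$.

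First I would prove the quaternionic analog of Lemma \ref{lem:realtrace}: that $\phi(\mathcal{R}_{sp})$ is Zariski closed in $\mathrm{SL}(4(n+1),\mathbb{R})$. Writing the diagonal entries of a matrix $g=(g_{m,l}) \in \Sp$ as $g_{k,k}=a_k+b_k i+c_k j+d_k k$, the trace $tr(g)=\sum_k g_{k,k}$ lies in $\mathbb{R}$ if and only if the three real sums $\sum_k b_k$, $\sum_k c_k$, $\sum_k d_k$ vanish. Each of these is an $\mathbb{R}$-linear function of the entries of $\phi(g)$, so their simultaneous vanishing carves out a Zariski closed subset inside the (already Zariski closed) subgroup $\phi(\Sp)$. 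The first assertion is then immediate, exactly as in Corollary \ref{Zreal}: since $\phi(\Gamma) \subset \phi(\mathcal{R}_{sp})$ and the latter is Zariski closed, $\overline{\phi(\Gamma)} \subset \phi(\mathcal{R}_{sp})$, whence every element of $\overline{\Gamma}$ has real trace.

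For the structural decomposition, I would apply Theorem \ref{thm:structure} to the identity component $\overline{\Gamma}^\circ$, which is connected and almost Zariski closed, obtaining $\overline{\Gamma}^\circ = (LT) \ltimes U$. To force $U$ to be trivial and $L$ to be a connected, noncompact, real rank $1$ semisimple group, I would invoke Lemma \ref{semisimple}, whose accompanying remark explicitly permits its application inside any rank $1$ semisimple Lie group, in particular $\Sp$. The hypothesis to check is that $\overline{\Gamma}^\circ$ is nonamenable: any discrete amenable subgroup of the rank $1$ Lie group $\Sp$ is elementary, so the nonelementary $\Gamma$ is nonamenable, hence so is $\overline{\Gamma}$, and hence so is its finite-index normal subgroup $\overline{\Gamma}^\circ$. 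With $U=\{e\}$ in hand, the centralization of $L$ and $T$, the finiteness of $L \cap T$, and the equality $\overline{\Gamma}^\circ=LT$ are precisely parts (1) and (3) of Theorem \ref{thm:structure}.

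The main obstacle I anticipate is conceptual rather than computational: one needs to be confident that Lemma \ref{semisimple} genuinely transplants to $\Sp$. The only step of its proof that is sensitive to the ambient group is the use of amenability of the stabilizer of a boundary point in order to kill the unipotent radical $U$; but this is a feature of any real rank $1$ simple Lie group, whose minimal parabolic subgroups are solvable up to a compact factor. Beyond this verification, the argument requires no new idea, in accordance with the remark following Lemma \ref{semisimple}.
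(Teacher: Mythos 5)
Your proposal is correct and follows essentially the same route as the paper: the paper likewise embeds $\Sp$ into a real special linear group via the $4\times 4$ real representation of quaternions, observes that $\mathcal R_{sp}$ is cut out by the (linear) vanishing of the three imaginary parts of the trace so that the argument of Lemma \ref{lem:realtrace} and Corollary \ref{Zreal} carries over, and then applies Theorem \ref{thm:structure} together with Lemma \ref{semisimple}, whose proof (as the paper's remark notes) only uses that the ambient group has real rank $1$. Your explicit check that nonelementary implies nonamenable for $\overline\Gamma^\circ$ is the same hypothesis the paper verifies before invoking Lemma \ref{semisimple}.
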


If the trace is invariant under conjugation in $\Sp$, then one can exclude $\mathrm{SU}(m,1)$ for $2\leq m\leq n$ and $\mathrm{Sp}(k,1)$ for $1\leq k\leq n$ in the list of possible Lie groups for $L_{nc}$ as done in the $\su$ case. Unfortunately one can not do. This makes it difficult to find  all possible Lie groups for $L_{nc}$.
First, we will start with the $\mathrm{Sp}(1,1)$, $\mathrm{Sp}(2,1)$ cases and then we will deal with the general case. 


\subsection{$\mathrm{Sp}(1,1)$-case}\label{sec:sp11}

Recall that $$\mathrm{Sp}(1,1) =\{ g \in M_2(\mathbb H) \ | \ g^*I_{1,1}g=I_{1,1} \}$$ where $g^*$ is the conjugate transposed matrix of $g$.
A straightforward computation shows that
$$\mathrm{Sp}(1,1)=\Bigg\{ \left[ \begin{array}{rr} a & b \\ c & d \end{array} \right] \in M_2(\mathbb H) \ \Bigg| \ |a|^2-|c|^2=|d|^2-|b|^2=1, \ \bar a b= \bar c d \Bigg\}.$$
For a matrix $A=\left[ \begin{array}{rr} a & b \\ c & d \end{array} \right] \in \mathrm{Sp}(1,1)$, its inverse $A^{-1}$ is written as $$A^{-1}=\left[ \begin{array}{rr} \bar a & -\bar c \\ -\bar b & \bar d \end{array} \right].$$
In addition, it can be easily seen that $|a|=|d|$ and $|b|=|c|$.

Let $\Gamma$ be a nonelementary discrete subgroup of $\mathrm{Sp}(1,1)$ in which $tr(\gamma)\in \mathbb R$ for all $\gamma \in \Gamma$. According to Lemma \ref{qualemma1}, $\overline \Gamma^\circ=LT$. 
Let $Y$ be the symmetric space associated with $L$. Then $Y$ is a totally geodesic submanifold of $\mathbf H_{\mathbb H}^1$ and hence $Y$ is isometric to either $\mathbf H_{\mathbb C}^1$ or $\mathbf H_{\mathbb H}^1$. Noting that $T$ centralizes $L$ and  $\overline \Gamma^\circ$ is a normal subgroup of $\overline \Gamma$, it can be easily shown that $\overline \Gamma$ stabilizes $Y$.
If $Y= \mathbf H_{\mathbb H}^1$, then $\overline \Gamma=\mathrm{Sp}(1,1)$.
However the set of traces of $\mathrm{Sp}(1,1)$ is not contained in $\mathbb R$. For example, take 
$$g=\left[ \begin{array}{rr} i & 0 \\ 0 & j \end{array} \right] \in \mathrm{Sp}(1,1).$$ Then $tr(g)=i+j$ is not real. For this reason, $\overline \Gamma$ can never be $\mathrm{Sp}(1,1)$ and thus $Y$ can not be $\mathbf H_{\mathbb H}^1$. 

Now we suppose that $Y$ is isometric to $\mathbf H_{\mathbb C}^1$. By a similar reason as the proof of Proposition \ref{L}, the noncompact simple factor $L_{nc}$ of $L$ is conjugate to $\suone$. Let $L_{nc}=g\suone g^{-1}$ for some $g\in \spone$. Since every element of $\overline \Gamma$ has real trace, every element of $g \suone g^{-1}$ also has real trace. Note that although the set of traces of $\suone$ is contained in $\mathbb R$, the set of traces of $g \suone g^{-1}$ may be not contained in $\mathbb R$ for some $g\in \spone$. Here is an example. Let $$g=\left[ \begin{array}{rr} \frac{1+i}{\sqrt 2} & 0 \\ 0 & \frac{j+k}{\sqrt 2} \end{array} \right] \in \mathrm{Sp}(1,1).$$ Any element of $\suone$ can be written as $$\left[ \begin{array}{rr} z & w \\ \bar w & \bar z \end{array} \right]$$ where $z$ and $w$ are complex numbers with $|z|^2 - |w|^2 =1$. Then a straightfoward computation shows that

\begin{eqnarray*}
\lefteqn{tr \left( \left[ \begin{array}{cc} \frac{1+i}{\sqrt 2} & 0 \\ 0 & \frac{j+k}{\sqrt 2} \end{array} \right] \left[ \begin{array}{cc} z & w \\ \bar w & \bar z \end{array} \right] \left[ \begin{array}{cc} \frac{1+i}{\sqrt 2} & 0 \\ 0 & \frac{j+k}{\sqrt 2} \end{array} \right]^{-1} \right)} \\
& & =\frac{1}{2} \left\{ (1+i)z(1-i) -(j+k)\bar z (j+k) \right\} = 2z.
\end{eqnarray*}
Since $z$ can be any complex number with $|z| \geq 1$, the set of traces of $g \suone g^{-1}$ is equal to $\{z\in \mathbb C \ | \ |z|\geq 2 \}$ which is not contained in $\mathbb R$. 

Consider the set $\mathcal R$ of $\mathrm{Sp}(1,1)$ defined by \begin{eqnarray}\label{suone}\mathcal R=\{ g\in \spone \ | \ tr(g A g^{-1}) \in \mathbb R \text{ for all }A\in \suone \}. \end{eqnarray}
One can notice that $\suone \subset \mathcal R$. We show that the converse is also true up to the left action of $\mathrm{Sp}(1)$ on $\suone$ as follows.

\begin{proposition}
Every element of $g \suone g^{-1}$ has real trace if and only if $g \in \mathrm{Sp}(1)\cdot \suone$.
\end{proposition}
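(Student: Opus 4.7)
The proposition has two directions, where $\mathrm{Sp}(1)$ is embedded as the scalar unit quaternions $\{uI : u \in \mathbb H,\ |u|=1\}$. The forward inclusion $\mathrm{Sp}(1) \cdot \suone \subseteq \mathcal R$ is routine: writing $g = (uI)h$ with $|u|=1$ and $h \in \suone$, for any $A \in \suone$ the matrix $hAh^{-1}$ lies in $\suone$ and so has entries in $\mathbb C$ and real trace; conjugation by the scalar matrix $uI$ applies the inner automorphism $m \mapsto u m u^{-1}$ entry-wise, giving $tr(gAg^{-1}) = u \cdot tr(hAh^{-1}) \cdot u^{-1}$, and since $tr(hAh^{-1}) \in \mathbb R$ is central in $\mathbb H$ this equals $tr(hAh^{-1}) \in \mathbb R$.

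For the reverse inclusion, suppose $tr(gAg^{-1}) \in \mathbb R$ for every $A \in \suone$. Taking $A = \exp(tX)$ and differentiating at $t=0$, this is equivalent to $tr(gXg^{-1}) \in \mathbb R$ for every $X$ in the Lie algebra $\mathfrak{su}(1,1)$, which it suffices to test on the standard basis $X_1 = \mathrm{diag}(i,-i)$, $X_2 = \bigl(\begin{smallmatrix} 0 & 1 \\ 1 & 0 \end{smallmatrix}\bigr)$, $X_3 = \bigl(\begin{smallmatrix} 0 & i \\ -i & 0 \end{smallmatrix}\bigr)$. Writing $g = \bigl(\begin{smallmatrix} a & b \\ c & d \end{smallmatrix}\bigr)$ with $g^{-1} = \bigl(\begin{smallmatrix} \bar a & -\bar c \\ -\bar b & \bar d \end{smallmatrix}\bigr)$, decomposing each quaternion entry as $a = a_1 + a_2 j$ with $a_1, a_2 \in \mathbb C$, and using $j\alpha = \bar\alpha j$ for $\alpha \in \mathbb C$, a direct computation of the three traces produces algebraic identities among the components $a_i, b_i, c_i, d_i$. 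Combining these with the $\spone$ relations $|a|^2 - |c|^2 = |d|^2 - |b|^2 = 1$, $\bar a b = \bar c d$, $|a|=|d|$, $|b|=|c|$, one extracts in particular $a_1 b_2 = c_1 d_2$ and $a_2 b_1 = c_2 d_1$ by adding and subtracting the $j$-component identities obtained from $tr(gX_2 g^{-1})$ and $tr(gX_3 g^{-1})$.

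The plan is then to verify that these relations force the four entries $a, b, c, d$ to lie in a common right complex line $u\mathbb C \subset \mathbb H$ for some unit quaternion $u$, equivalently that $\bar a b,\ \bar a c,\ \bar a d \in \mathbb C$. Once this holds, taking $u$ as the polar part of $a$ (adjusted by a unit complex phase), the matrix $h := \bar u \cdot g$ has all entries in $\mathbb C$ and still preserves the Hermitian form, hence lies in $U(1,1)$; absorbing a possible residual $U(1)$-scalar back into the $\mathrm{Sp}(1)$-factor yields $h \in \suone$ and $g = (uI) h \in \mathrm{Sp}(1) \cdot \suone$. The main obstacle is that, as emphasized at the start of this section, the quaternionic trace is not conjugation-invariant, so the purely Lie-theoretic shortcut available in the $\su$ case does not apply; instead one must carefully track the $1, i, j, k$-components of the trace expressions and show that the resulting, initially over-determined system of component identities conspires with the $\spone$ relations to place all four entries of $g$ on a common complex line in $\mathbb H$.
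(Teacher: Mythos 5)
Your forward inclusion is complete and matches the paper's: a real trace is central in $\mathbb H$, so conjugation by a unit scalar does not disturb it. The problem is the reverse inclusion, which is where all the content of the proposition lives, and there your argument stops at exactly the point where the work begins. You correctly reduce the hypothesis to three trace conditions (testing on a basis of $\mathfrak{su}(1,1)$ carries the same information as the paper's three displayed conditions obtained from a general element $\left[\begin{smallmatrix} z & w \\ \bar w & \bar z\end{smallmatrix}\right]$), and you record two sample component identities, but the decisive claim --- that these identities together with the $\spone$ relations force $a,b,c,d$ onto a common complex line $u\mathbb C$ --- is only announced (``the plan is then to verify\dots''), never established. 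The two identities you exhibit, $a_1b_2=c_1d_2$ and $a_2b_1=c_2d_1$, do not on their face rule out configurations where the entries sit on different complex lines; whether the full over-determined system ``conspires'' to give the conclusion is precisely the theorem, so leaving it as a plan is a genuine gap, not a routine verification.

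For comparison, the paper closes this gap by first spending the $\mathrm{Sp}(1)$-freedom to normalize $a$ to be a nonnegative real number, which collapses the system dramatically: the relation $\bar a b=\bar c d$ then gives $b=\bar c d/a$, and substituting into the three vanishing trace expressions yields, in order, $\bar d c=c\bar d$, then $(ic-ci)\bar d=0$, hence $c\in\mathbb C$, and from there $d\in\mathbb C$ and $b\in\mathbb C$ (with a separate easy case when $b=c=0$, where one solves $d i\bar d=i$ directly). If you want to complete your version without that normalization, you must either prove the ``common complex line'' claim from the component identities by an explicit computation of comparable length, or adopt the normalization trick --- which you are entitled to, since you already proved that $\mathcal R$ is invariant under left multiplication by $\mathrm{Sp}(1)$. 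As written, the proposal is an outline of a correct strategy rather than a proof.
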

\begin{proof}
It is sufficient to show that $\mathcal R=\mathrm{Sp}(1) \cdot \suone$.
First of all, we observe that if $g \in \mathcal R$, then $hg \in \mathcal R$ for any unit quaternion number $h$ as follows: If $g \in \mathcal R$, then $tr(gug^{-1}) \in \mathbb \mathcal R$ for all $u \in \suone$. It is easy to check that for any unit quaternion number $h$,
\begin{eqnarray*}
tr((hg)u(hg)^{-1})&=&tr((hg)u(g^{-1} \bar h))\\
&=&h \cdot tr(gug^{-1}) \cdot \bar h=|h|^2 tr(gug^{-1}) = tr(gug^{-1}) \in \mathbb R\end{eqnarray*}
for all $u\in \suone$.
Hence $hg \in \mathcal R$. This implies that if $g\in \mathcal R$, then every element of the $\mathrm{Sp}(1)$-orbit of $g$ under the left action of $\mathrm{Sp}(1)$ on $\spone$ is an element of $\mathcal R$. Since the trace is invariant in $\suone$ under conjugation by any element of $\suone$, it is clear that $\suone \subset \mathcal R$. It follows from the above observation that $\mathrm{Sp}(1) \cdot \suone \subset \mathcal R$.
From now on, we will show its converse $\mathcal R \subset \mathrm{Sp}(1) \cdot \suone$.

Put $$g=\left[ \begin{array}{rr} a&b \\c&d\end{array}\right] \in \spone, \ |a|^2-|c|^2=|d|^2-|b|^2=1, \ \bar a b= \bar c d.$$
Recall that $|a||b|=|c||d|$. We may assume that $a$ is a nonnegative real number by multiplying some unit quaternion number to $g$.

\smallskip
{\bf Case $1$} : $|a||b|=|c||d|=0$

\smallskip
Due to $|a|^2-|c|^2=|d|^2-|b|^2=1$, $|a|$ and $|d|$ can never be $0$. Hence we have $b=c=0$ and $|a|=|d|=1$. Moreover since $a$ is a nonnegative real number, $a=1$. By the assumption that every element of $g \suone g^{-1}$ has real trace, 
$$tr \left( \left[ \begin{array}{cc} a&0\\0&d \end{array} \right] \left[ \begin{array}{cc} i&0\\0&-i \end{array} \right] \left[ \begin{array}{cc} \bar a&0\\0& \bar d \end{array} \right] \right)= ai\bar a - d i \bar d=  i-di\bar d \in \mathbb R.$$ 
Noting that $\overline {i-di \bar d}= -(i-di\bar d)$, it is easy to see that $i-di \bar d=0$. Writing $d=d_1+d_2i+d_3j+d_4k$,
\begin{eqnarray*}
di\bar d &=& (d_1+d_2i+d_3j+d_4k)i(d_1-d_2i-d_3j-d_4k) \\
&=& (d_1^2+d_2^2-d_3^2-d_4^2)i +2(d_1d_4+d_2d_3)j+2(-d_1d_3+d_2d_4)k =i.
\end{eqnarray*}
Now we have the following equations.
\begin{equation}\label{d}
     \begin{aligned}
& d_1^2+d_2^2-d_3^2-d_4^2=1, \\
& d_1d_4+d_2d_3=0, \\
& d_1d_3-d_2d_4=0.
     \end{aligned}
\end{equation}
Then we have $d_1d_2(d_3^2+d_4^2)=0$ from the last two equations of (\ref{d}). If $d_1$=0, then $d_2$ can never be $0$ due to $d_1^2+d_2^2-d_3^2-d_4^2=1$. Hence $d_3=d_4=0$. In a similar way, if $d_2=0$, then $d_3=d_4=0$. If $d_3^2+d_4^2=0$, then clearly, $d_3=d_4=0$. Thus in either case, $d_3=d_4=0$. This implies $d\in \mathbb C$ with $|d|=1$. Let $d=e^{2i\theta}$. Then
$$g \in \mathrm{Sp}(1) \cdot \left[ \begin{array}{cc} 1&0 \\0& e^{2i\theta} \end{array} \right] = \mathrm{Sp}(1) \cdot \left[ \begin{array}{cc} e^{-i\theta} &0 \\0& e^{i\theta} \end{array} \right] \subset \mathrm{Sp}(1)\cdot \suone.$$

\smallskip

{\bf Case $2$} : $|a||b|=|c||d|\neq 0$

\smallskip
Recall again that any element of $\suone$ can be written as $$\left[ \begin{array}{cc} z & w \\ \bar w &\bar z \end{array}\right], \ z, w \in \mathbb C, \ |z|^2-|w|^2=1.$$
Let $z =z_1 + iz_2$ and $w=w_1 + iw_2 $. Then
\begin{eqnarray*}
\lefteqn{tr \left( \left[ \begin{array}{cc} a&b\\c&d \end{array} \right] \left[ \begin{array}{cc}z & w \\ \bar w &\bar z \end{array} \right] \left[ \begin{array}{rr} \bar a& -\bar c \\ -\bar b& \bar d \end{array} \right] \right)} \\
&=& z_1(|a|^2-|b|^2-|c|^2+|d|^2) + z_2 (ai\bar a + bi \bar b- ci\bar c -di \bar d) \\
& & + w_1 (-a \bar b + b \bar a + c\bar d - d\bar c) +w_2 ( - ai \bar b -bi \bar a +ci \bar d+ di \bar c) \in \mathbb R.
\end{eqnarray*}
for all real numbers $z_1$, $z_2$, $w_1$ and $w_2$ with $z_1^2+z_2^2-w_1^2-w_2^2=1$. Hence we have 
\begin{equation}\label{case2}
     \begin{aligned}
& ai\bar a + bi \bar b- ci\bar c -di \bar d \in \mathbb R, \\
& a \bar b - b \bar a - c\bar d + d\bar c \in \mathbb R, \\
& ai \bar b +bi \bar a -ci \bar d- di \bar c \in \mathbb R.
     \end{aligned}
\end{equation}
All three numbers in (\ref{case2}) satisfy $\bar q= -q$ and so, they are actually zero. 
Since $a$ is real and $\bar a b= \bar c d$, $b= \frac{\bar c d}{a}$. Then
$$
0= a \bar b - b \bar a - c\bar d + d\bar c = \bar d c - \bar c d - c \bar d + d \bar c = (\bar d c - c \bar d) - \overline{(\bar d c - c \bar d)}.$$
This implies that $\bar d c - c \bar d$ is real. Furthermore, it can be shown by a direct computation that the real part of $\bar d c - c \bar d$ is zero for all quaternion numbers $c$, $d$ and hence $\bar d c - c \bar d=0$. 

Putting $b= \frac{\bar c d}{a}$ in $ai \bar b +bi \bar a -ci \bar d- di \bar c =0$,
$$0=ai \bar b +bi \bar a -ci \bar d- di \bar c =i \bar d c +\bar c d i - ci\bar d -di \bar c = (i\bar d c - c i \bar d) -\overline{(i\bar d c - c i \bar d)}.$$
This implies $(i\bar d c - c i \bar d)\in \mathbb R$. It is easy to check that the real part of $(i\bar d c - c i \bar d)$ is zero and thus, $i\bar d c - c i \bar d=0$. Since $\bar d c=c \bar d$, $$ i\bar d c - c i \bar d=(ic-ci)\bar d=0.$$
Finally we have $ci=ic$ since $d$ can not be $0$. It is not difficult to see that if $ci=ic$, then $c$ is a complex number.
If $c$ is a real number, then $b=\frac{c}{a}d$ and so we have
$$0=ai\bar a + bi \bar b- ci\bar c -di \bar d =(a^2-c^2)i +\left(\frac{c}{a}\right)^2 di\bar d - di \bar d=i-\frac{di\bar d}{a^2}.$$
This means that $$\frac{d}{a} i \bar{\left(\frac{d}{a}\right)}=i.$$
By the same argument in Case $1$, it follows that $\frac{d}{a}$ is a complex number and so is $d$. From $b=\frac{c}{a}d$, it follows that $b$ is a complex number. Therefore, all numbers $a$, $b$, $c$ and $d$ are complex numbers.

If $c$ is a complex number which is not real, then $\bar d c=c \bar d$ implies that $d$ is a complex number. Since $c$ and $d$ are complex numbers, $b= \frac{\bar c d}{a}$ is also a complex number. Hence all $a$, $b$, $c$ and $d$ are complex numbers.

In either Case $1$ or $2$, we prove that $g$ is a complex matrix in $\spone$ and thus, $g \in \mathrm{U}(1,1)$. It is easy to see that 
$$ g \in \mathrm{Sp}(1) \cdot \mathrm{U}(1,1) = \mathrm{Sp}(1) \cdot \mathrm{SU}(1,1). $$ 
Therefore $\mathcal R \subset \mathrm{Sp}(1) \cdot \mathrm{SU}(1,1)$. 
Finally we have $\mathcal R= \mathrm{Sp}(1) \cdot \suone $.
\end{proof}

\begin{theorem}
Let $\Gamma$ be a nonelementary discrete subgroup of $\spone$. Then the trace field of $\Gamma$ is real if and only if $\Gamma$ is conjugate to a subgroup of $\suone$ by an element of $\mathrm{Sp}(1)$.
\end{theorem}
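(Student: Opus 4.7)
The plan is to combine the just-proved Proposition (characterizing conjugates $g\suone g^{-1}$ with all real traces) with Lemma \ref{qualemma1}. The backward direction is immediate: conjugation by $h\in\mathrm{Sp}(1)$ sends $\gamma$ to $h\gamma h^{-1}$, whose trace equals $h\cdot tr(\gamma)\cdot h^{-1}=tr(\gamma)$ whenever $tr(\gamma)\in\mathbb R$, since real numbers are central in $\mathbb H$. Hence if $\Gamma$ is conjugate to a subgroup of $\suone$ by an element of $\mathrm{Sp}(1)$, its trace field is real.

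For the forward direction, apply Lemma \ref{qualemma1} to get $\overline\Gamma^\circ=LT$ with every element of $\overline\Gamma$ having real trace. The symmetric space $Y$ of $L$ is a totally geodesic submanifold of $\mathbf H^1_{\mathbb H}$, so $Y=\mathbf H^1_{\mathbb C}$ or $\mathbf H^1_{\mathbb H}$; the second case would force $\overline\Gamma=\mathrm{Sp}(1,1)$, which contains $\operatorname{diag}(i,j)$ with non-real trace, so it is excluded. Hence $L_{nc}=g\suone g^{-1}$ for some $g\in\spone$, and since every element of this conjugate has real trace, the preceding Proposition gives $g\in\mathrm{Sp}(1)\cdot\suone$; writing $g=hs_0$ with $h\in\mathrm{Sp}(1)$ and $s_0\in\suone$ yields $L_{nc}=h\suone h^{-1}$. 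After replacing $\Gamma$ by $h^{-1}\Gamma h$---a conjugation by an element of $\mathrm{Sp}(1)$, which preserves the real-trace property by the previous paragraph---I may assume $L_{nc}=\suone$ outright.

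In this normalized situation, $\suone$ is the noncompact simple factor of the Levi part of $\overline\Gamma^\circ$ and is therefore characteristic in $\overline\Gamma^\circ$ and normal in $\overline\Gamma$. So $\Gamma$ lies in the normalizer $N_{\spone}(\suone)$; a direct computation (using that the unit quaternions preserving $\mathbb C\subset\mathbb H$ under conjugation are exactly $\mathrm U(1)\cup\mathrm U(1)\cdot j$) identifies this normalizer with the stabilizer of the standard complex slice $\mathbf H^1_{\mathbb C}$, explicitly $N_{\spone}(\suone)=\mathrm U(1,1)\sqcup jI\cdot\mathrm U(1,1)$.

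The last step, where the noncommutativity of $\mathbb H$ is the main obstacle, is to rule out the $jI$-coset and then to pin down the remaining $\mathrm U(1)$-factor, using $\suone\subset\overline\Gamma$. For every $\gamma\in\Gamma$ and every $A\in\suone$, the product $\gamma A\in\overline\Gamma$ has real trace. If $\gamma=jI\cdot u$ with $u\in\mathrm U(1,1)$, then $tr(\gamma A)=j\cdot tr(uA)\in j\mathbb C\cap\mathbb R=\{0\}$, forcing $tr(uA)\equiv 0$ as $A$ ranges over $\suone$; writing $u=e^{i\theta_0}s_0$ gives $tr(uA)=e^{i\theta_0}tr(s_0A)$, and since $tr(s_0A)$ takes every real value as $A$ varies over $\suone$, this is impossible. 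Hence $\Gamma\subset\mathrm U(1,1)$, and writing each $\gamma=e^{i\theta}s$ with $s\in\suone$, the identity $tr(\gamma A)=e^{i\theta}tr(sA)\in\mathbb R$ together with the surjectivity of $A\mapsto tr(sA)$ onto $\mathbb R$ forces $e^{i\theta}\in\mathbb R$, so $e^{i\theta}=\pm 1$ and $\gamma\in\pm\suone=\suone$. This gives $h^{-1}\Gamma h\subset\suone$, as desired.
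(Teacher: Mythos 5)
Your proof is correct and follows the same route the paper takes: reduce via Lemma \ref{qualemma1} to $\overline\Gamma^\circ=LT$, exclude $Y=\mathbf H^1_{\mathbb H}$ using the non-real trace of $\mathrm{diag}(i,j)$, and invoke the Proposition $\mathcal R=\mathrm{Sp}(1)\cdot\suone$ to normalize $L_{nc}=\suone$. The paper states the theorem without writing out the final passage from $L_{nc}$ to all of $\Gamma$; your completion of that step (normality of $L_{nc}$ in $\overline\Gamma$, the normalizer computation $\mathrm U(1,1)\sqcup jI\cdot\mathrm U(1,1)$, and the elimination of the $jI$-coset and of the $\mathrm U(1)$-factor by letting $A$ range over $\suone\subset\overline\Gamma$ in $tr(\gamma A)$) is sound and is the natural quaternionic analogue of the argument ending the proof of Theorem \ref{realtrace}.
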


\begin{remark}
Regarding $\Gamma$ as a nonelementary discrete subgroup of $\mathrm{PSp}(1,1)$ acting on the quaternionic hyperbolic space $\mathbf{H}_{\mathbb H}^1$, $\Gamma$ stabilizes $\mathbf{H}_{\mathbb C}^1$ up to the left action of $\mathrm{Sp}(1)$ on $\mathbf{H}_{\mathbb H}^1$. Note that the right action of $\mathrm{Sp}(1)$ on $\mathbf{H}_{\mathbb H}^1$ is trivial but the left action of $\mathrm{Sp}(1)$ on $\mathbf{H}_{\mathbb H}^1$ is not.
\end{remark}

\subsection{$\mathrm{Sp}(2,1)$-case}

Similarly to the $\su$ case, it can be seen that $L_{nc}$ is conjugate to either $\mathrm{SO}(2,1)$, $1\oplus \mathrm{SU}(1,1)$, $\mathrm{SU}(2,1)$, $1\oplus\mathrm{Sp}(1,1)$ or $\mathrm{Sp}(2,1)$ and every element of $L_{nc}$ has real trace. Then one can notice easily that $L_{nc}$ can not be conjugate to 
$\mathrm{Sp}(2,1)$. This follows from the fact that $L_{nc}=\mathrm{Sp}(2,1)$ if $L_{nc}$ is conjugate to $\mathrm{Sp}(2,1)$ and the set of traces of $\mathrm{Sp}(2,1)$ is not contained in $\mathbb R$. 

In the case of $\mathrm{SU}(2,1)$ and $1 \oplus \mathrm{Sp}(1,1)$, one can not easily deduce that $L_{nc}$ can not be $\mathrm{SU}(2,1)$ and $1 \oplus \mathrm{Sp}(1,1)$ even if the set of traces for them is not contained in $\mathbb R$. This is because the trace is not invariant under conjugation in $\mathrm{Sp}(2,1)$. One need to check whether the set of traces is not contained in $\mathbb R$ or not, for all groups conjugate to $\mathrm{SU}(2,1)$ or $1 \oplus \mathrm{Sp}(1,1)$ in $\mathrm{Sp}(2,1)$ to exclude them.

\begin{lemma}\label{sp11}
The set of traces of $g (1\oplus \mathrm{Sp}(1,1)) g^{-1}$ is not contained in $\mathbb R$ for any $g\in \mathrm{Sp}(2,1)$.
\end{lemma}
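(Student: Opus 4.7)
The plan is to construct, for each $g\in \mathrm{Sp}(2,1)$, a one-parameter family of elements of $1\oplus \mathrm{Sp}(1,1)$ whose $g$-conjugates cannot all have real trace, and to extract the contradiction from the defining quadratic relation on the third column of $g$.

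The family I would use is
\[ A_{t,q} \;=\; \mathrm{diag}(1,\,1,\,e^{tq}) \;=\; 1 \oplus \mathrm{diag}(1,\,e^{tq}), \]
parametrized by $t\in\mathbb R$ and a purely imaginary unit quaternion $q$. Since $|e^{tq}|=1$, the lower $2\times 2$ block is in $\mathrm{Sp}(1,1)$, so $A_{t,q}\in 1\oplus \mathrm{Sp}(1,1)$. Writing $g=(g_{ij})$ and using $g^{-1}=I_{2,1}g^{*}I_{2,1}$, together with the fact that $A_{t,q}-I$ has its only non-zero entry at position $(3,3)$ equal to $(\cos t-1)+q\sin t$, a short direct computation gives
\[ \mathrm{tr}\bigl(g\,A_{t,q}\,g^{-1}\bigr) \;=\; 2+\cos t \;-\; \sin t\cdot F(g,q), \]
where $F(g,q)=g_{13}\,q\,\bar{g}_{13}+g_{23}\,q\,\bar{g}_{23}-g_{33}\,q\,\bar{g}_{33}$. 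Since $q$ is purely imaginary, each summand $g_{i3}\,q\,\bar{g}_{i3}$ is again purely imaginary, so $F(g,q)\in\mathrm{Im}(\mathbb H)$.

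Now I would argue by contradiction: assume $\mathrm{tr}(gA_{t,q}g^{-1})\in\mathbb R$ for every $t$ and every such $q$. Then $\sin t\cdot F(g,q)\in\mathbb R$, which combined with $F(g,q)$ being purely imaginary forces $F(g,q)=0$, i.e.
\[ g_{33}\,q\,\bar{g}_{33} \;=\; g_{13}\,q\,\bar{g}_{13}+g_{23}\,q\,\bar{g}_{23} \qquad \textrm{for all } q\in\mathrm{Im}(\mathbb H). \]
Taking $|q|=1$ and using $|u\,v\,\bar u|=|u|^2|v|$ together with the triangle inequality yields $|g_{33}|^2\leq |g_{13}|^2+|g_{23}|^2$. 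On the other hand, the $(3,3)$-entry of the defining identity $g^{*}I_{2,1}g=I_{2,1}$ is
\[ |g_{13}|^2+|g_{23}|^2-|g_{33}|^2 \;=\; -1, \]
so $|g_{33}|^2=1+|g_{13}|^2+|g_{23}|^2$, which collapses the previous inequality to $1\leq 0$. This contradiction produces a concrete $A_{t,q}$ whose $g$-conjugate has non-real trace, proving the lemma.

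The only creative step is locating the right test element $A_{t,q}$; once it is in hand the calculation is mechanical, and the contradiction is extracted from a single column relation of the Hermitian form $I_{2,1}$, making the argument uniform in $g$. I therefore do not anticipate any serious obstacle.
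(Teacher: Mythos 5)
Your proof is correct, and it takes a genuinely different (and in fact cleaner) route than the paper's proof of this lemma. The paper tests against the elements $1\oplus\mathrm{diag}(q,1)=\mathrm{diag}(1,q,1)$ for unit quaternions $q$, which probes the \emph{second} column of $g$; there the Hermitian relation reads $|a_{1,2}|^2+|a_{2,2}|^2-|a_{3,2}|^2=+1$, the sign works against you, and the paper has to expand $xq\bar x+yq\bar y-zq\bar z\in\mathbb R$ for $q=i,j,k$ into ten real quadratic equations and interpret them as four pairwise-orthogonal positive vectors in $\mathbb R^{2,1}$, which is impossible by signature. You instead test against $\mathrm{diag}(1,1,e^{tq})$, which probes the \emph{third} (negative) column, where the relation reads $|g_{13}|^2+|g_{23}|^2-|g_{33}|^2=-1$; combined with $|g_{33}\,q\,\bar g_{33}|=|g_{33}|^2$ and the triangle inequality, the contradiction $1\le 0$ drops out in two lines. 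All the intermediate steps check out: $\mathrm{diag}(1,e^{tq})\in\mathrm{Sp}(1,1)$ since $|e^{tq}|=1$, the trace formula $2+\cos t-\sin t\cdot F(g,q)$ follows from $g^{-1}=I_{2,1}g^*I_{2,1}$, and $u\,q\,\bar u$ is purely imaginary whenever $q$ is, so $F(g,q)=0$ is forced. Two remarks. First, you only need a single test element: $t=\pi/2$, $q=i$ gives $\mathrm{diag}(1,1,i)$, and a single purely imaginary $q$ already suffices for the norm inequality, so the one-parameter family is more machinery than required. Second, your argument is essentially the paper's own later \emph{Criterion I} (the proposition on $d_n=\mathrm{diag}(1,\dots,1,i)$) specialized to $n=2$, $m=1$ --- which subsumes this lemma via its corollary --- except that where the paper disposes of the resulting equation system $\sum x_m i\bar x_m - x_{n+1}i\bar x_{n+1}=0$, $\sum|x_m|^2-|x_{n+1}|^2=-1$ by a substitution and a componentwise computation, your triangle-inequality finish does the same job more directly. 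So you have in effect found the streamlined version of the general argument and applied it here.
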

\begin{proof}
Assume that the set of traces of $g (1\oplus \mathrm{Sp}(1,1)) g^{-1}$ is contained in $\mathbb R$ for some $g\in \mathrm{Sp}(2,1)$. Put 
$$g=\left[ \begin{array}{rrr} a_{1,1}&a_{1,2}&a_{1,3} \\ a_{2,1}&a_{2,2}&a_{2,3} \\ a_{3,1}&a_{3,2}&a_{3,3} \end{array}\right].$$
For any $q \in \mathrm{Sp}(1)$, $\left[ \begin{array}{rr} q&0\\0&1\end{array}\right]$ is an element of $\spone$. Hence we have
$$ tr \left( \left[ \begin{array}{rrr} a_{1,1}&a_{1,2}&a_{1,3} \\ a_{2,1}&a_{2,2}&a_{2,3} \\ a_{3,1}&a_{3,2}&a_{3,3} \end{array}\right] \left[ \begin{array}{rrr} 1&0&0 \\ 0&q&0 \\ 0&0&1 \end{array}\right] \left[ \begin{array}{rrr} \bar a_{1,1}& \bar a_{2,1}& -\bar a_{3,1} \\ \bar a_{1,2}& \bar a_{2,2}& -\bar a_{3,2} \\ -\bar a_{1,3}& -\bar a_{2,3}& \bar a_{3,3} \end{array}\right] \right) \in \mathbb R$$
for all unit quaternion numbers $q$. By a direct computation, it can be shown that the above condition is equivalent to 
$$ a_{1,2}q \bar a_{1,2} + a_{2,2} q \bar a_{2,2} - a_{3,2} q \bar a_{3,2} \in \mathbb R$$ for all unit quaternion numbers $q$. From the fact $g \in \mathrm{Sp}(2,1)$, it holds that $$|a_{1,2}|^2 + |a_{2,2}|^2 - |a_{3,2}|^2=1.$$
\smallskip
{\bf Claim }: There does not exist a triple of quaternion numbers $(x,y,z) \in \mathbb H^3$ with $|x|^2+|y|^2-|z|^2=1$ satisfying that $x q \bar x + y q \bar y - z q \bar z \in \mathbb R$ for all unit quaternion numbers $q$.

\begin{proof}[Proof of the Claim] 
Assume that there exists a $(x,y,z) \in \mathbb H^3$ such that $x q \bar x + y q \bar y - z q \bar z \in \mathbb R$ for all unit quaternion numbers $q$. Then it is equivalent to 
\begin{equation*}
     \begin{aligned}
& xi\bar x + yi\bar y -z i \bar z \in \mathbb R, \\
& xj\bar x + yj\bar y -z j \bar z \in \mathbb R, \\
& xk\bar x + yk\bar y -z k \bar z \in \mathbb R.
     \end{aligned}
\end{equation*}

Put $x=x_1 +x_2i+x_3j+x_4k$, $y=y_1+y_2i+y_3j+y_4k$ and $z=z_1+z_2i+z_3j+z_4k$. It follows from $xi\bar x + yi\bar y -z i \bar z \in \mathbb R$ that 
\begin{equation*}
     \begin{aligned}
& (x_1^2+x_2^2-x_3^2-x_4^2)+(y_1^2+y_2^2-y_3^2-y_4^2)-(z_1^2+z_2^2-z_3^2-z_4^2)=0, \\
& (x_1x_4+x_2x_3)+(y_1y_4+y_2y_3)-(z_1z_4+z_2z_3)=0, \\
& (x_2x_4-x_1x_3)+(y_2y_4-y_1y_3)-(z_2z_4-z_1z_3)=0.
     \end{aligned}
\end{equation*}
Similarly, $xj\bar x + yj\bar y -z j \bar z \in \mathbb R$ and $xk\bar x + yk\bar y -z k \bar z \in \mathbb R$ imply that 
\begin{equation*}
     \begin{aligned}
& (x_1^2-x_2^2+x_3^2-x_4^2)+(y_1^2-y_2^2+y_3^2-y_4^2)-(z_1^2-z_2^2+z_3^2-z_4^2)=0, \\
& (x_1x_4-x_2x_3)+(y_1y_4-y_2y_3)-(z_1z_4-z_2z_3)=0, \\
& (x_1x_2+x_3x_4)+(y_1y_2+y_3y_4)-(z_1z_2+z_3z_4)=0.
     \end{aligned}
\end{equation*}
and,
\begin{equation*}
     \begin{aligned}
& (x_1^2-x_2^2-x_3^2+x_4^2)+(y_1^2-y_2^2-y_3^2+y_4^2)-(z_1^2-z_2^2-z_3^2+z_4^2)=0, \\
& (x_1x_3+x_2x_4)+(y_1y_3+y_2y_4)-(z_1z_3+z_2z_4)=0, \\
& (x_1x_2-x_3x_4)+(y_1y_2-y_3y_4)-(z_1z_2-z_3z_4)=0,
     \end{aligned}
\end{equation*}
respectively. In addition, $|x|^2+|y|^2-|z|^2=1$ gives 
$$(x_1^2+x_2^2+x_3^2+x_4^2)+(y_1^2+y_2^2+y_3^2+y_4^2)-(z_1^2+z_2^2+z_3^2+z_4^2)=1.$$
Summarizing all equations above, one get the following equations.
\begin{equation}\label{alleqns}
     \begin{aligned}
& x_1^2+y_1^2-z_1^2=x_2^2+y_2^2-z_2^2=x_3^2+y_3^2-z_3^2=x_4^2+y_4^2-z_4^2=\frac{1}{4}, \\
& x_1x_2+y_1y_2-z_1z_2=0, \\
& x_1x_3+y_1y_3-z_1z_3=0, \\
& x_1x_4+y_1y_4-z_1z_4=0, \\
& x_2x_3+y_2y_3-z_2z_3=0, \\
& x_2x_4+y_2y_4-z_2z_4=0, \\
& x_3x_4+y_3y_4-z_3z_4=0. 
     \end{aligned}
\end{equation}
Let $v_i =(x_i,y_i,z_i)$ for each $i=1,\ldots,4$. Let $\mathbb R^{2,1}$ be a $3$-dimensional Minkowski space with a bilinear form $\langle \ ,\ \rangle_{2,1}$ defined by
$$\langle (x,y,z), (x',y',z') \rangle_{2,1} = xx'+yy'-zz'.$$
Then all equations in (\ref{alleqns}) mean that $\{v_1,v_2,v_3,v_4\}$ is the set of nontrivial positive vectors which are pairwise orthogonal with respect to the bilinear form $\langle \ ,\ \rangle_{2,1}$. 
However this is impossible due to the signature of $\langle \ ,\ \rangle_{2,1}$ and the dimension of $\mathbb R^{2,1}$. Therefore there does not exist any solution for (\ref{alleqns}).
\end{proof}
The claim implies that for any $g\in \mathrm{Sp}(2,1)$, the set of traces of $g(1\oplus \spone)g^{-1}$ can be never contained in $\mathbb R$. This completes the proof.
\end{proof}

Next we move to the case that $L_{nc}$ is conjugate to $\mathrm{SU}(2,1)$.

\begin{lemma}
The set of traces of $g \mathrm{SU}(2,1) g^{-1}$ is not contained in $\mathbb R$ for any $g\in \mathrm{Sp}(2,1)$.
\end{lemma}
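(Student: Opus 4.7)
The strategy parallels the proof of Lemma~\ref{sp11}. Suppose for contradiction that $g\in\mathrm{Sp}(2,1)$ satisfies $\mathrm{tr}(ghg^{-1})\in\mathbb R$ for every $h\in\mathrm{SU}(2,1)$. Decompose $g_{k\ell}=A_{k\ell}+B_{k\ell}j$ with $A_{k\ell},B_{k\ell}\in\mathbb C$, and let $A=(A_{k\ell})$, $B=(B_{k\ell})$ be the resulting $3\times 3$ complex matrices. Using $(g^{-1})_{\ell k}=\varepsilon_\ell\varepsilon_k\bar g_{k\ell}$ with $\varepsilon_1=\varepsilon_2=1$, $\varepsilon_3=-1$, together with the identity $qc\bar q=(|A|^2c+|B|^2\bar c)+AB(\bar c-c)j$ valid for $q=A+Bj$ and $c\in\mathbb C$, a direct computation gives, for every diagonal $D=\mathrm{diag}(c_1,c_2,c_3)\in\mathrm{SU}(2,1)$,
\[
\mathrm{tr}(gDg^{-1})=\sum_\ell\varepsilon_\ell\bigl(c_\ell P_\ell+\bar c_\ell Q_\ell\bigr)+\Bigl[\sum_\ell\varepsilon_\ell(\bar c_\ell-c_\ell)S_\ell\Bigr]j,
\]
where $P_\ell=\sum_k\varepsilon_k|A_{k\ell}|^2$, $Q_\ell=\sum_k\varepsilon_k|B_{k\ell}|^2$, and $S_\ell=\sum_k\varepsilon_k A_{k\ell}B_{k\ell}$.

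First I would run the real-trace condition through the full diagonal torus $c_\ell=e^{i\alpha_\ell}$ with $\sum\alpha_\ell\in 2\pi\mathbb Z$ and through the central element $\omega I$ with $\omega=e^{2\pi i/3}$. Reality of the $\mathbb C$-part and vanishing of the $j$-coefficient, combined with the Hermitian column normalizations $P_\ell+Q_\ell=\varepsilon_\ell$ coming from $g\in\mathrm{Sp}(2,1)$, pin down the rigid values $S_\ell=0$ and $P_\ell=Q_\ell=\tfrac12\varepsilon_\ell$ for every $\ell$. Next I would feed in a family of non-diagonal generators of $\mathrm{SU}(2,1)$ that separately mix the index pairs $(1,2)$, $(1,3)$ and $(2,3)$: for the $(1,2)$-slot the imaginary exchange $\bigl(\begin{smallmatrix}0&i&0\\ i&0&0\\ 0&0&1\end{smallmatrix}\bigr)$ together with its real analogue $\bigl(\begin{smallmatrix}0&1&0\\ -1&0&0\\ 0&0&1\end{smallmatrix}\bigr)$, and the natural boost counterparts in $1\oplus\mathrm{SU}(1,1)$ for the $(1,3)$- and $(2,3)$-slots. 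Combined with the off-diagonal Hermitian relations $\sum_k\varepsilon_k\bar g_{k\ell}g_{km}=0$, these generators force the cross-Hermitian pairings $\sum_k\varepsilon_k\bar A_{k\ell}A_{km}$ and $\sum_k\varepsilon_k\bar B_{k\ell}B_{km}$ and the cross-bilinear pairing $\sum_k\varepsilon_k A_{k\ell}B_{km}$ all to vanish whenever $\ell\neq m$.

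Once assembled, these identities say precisely that $A^*I_{2,1}A=B^*I_{2,1}B=\tfrac12 I_{2,1}$ and $A^TI_{2,1}B=0$. The first equation makes $A$, and hence $A^T$, invertible, so multiplying $A^TI_{2,1}B=0$ on the left by $(A^T)^{-1}$ yields $I_{2,1}B=0$ and therefore $B=0$, contradicting $B^*I_{2,1}B=\tfrac12 I_{2,1}$. This contradiction completes the proof.

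The main technical content is producing the bilinear identity $A^TI_{2,1}B=0$: one needs generators whose $j$-coefficient in $\mathrm{tr}(ghg^{-1})$ couples $A$-entries with $B$-entries rather than only with themselves, which is why both the $\mathbb C$-imaginary and the purely real exchange generator must be used together at each index pair to decouple the Hermitian from the bilinear pairings. In contrast with Lemma~\ref{sp11}, the internal scalars of elements of $\mathrm{SU}(2,1)$ lie only in $\mathbb C\subset\mathbb H$, so the indispensable ``$j$-twist'' has to be generated by the ambient conjugation rather than by the inserted element, which is the source of the extra bookkeeping.
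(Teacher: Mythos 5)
Your argument is correct, but it takes a genuinely different route from the paper's. The paper uses only three diagonal elements of $\mathrm{SU}(2,1)$, namely $\mathrm{diag}(z,w,\bar z\bar w)$ for $(z,w)=(1,i),(i,1),(i,i)$, extracts from them the single condition $a_{1,3}i\bar a_{1,3}+a_{2,3}i\bar a_{2,3}-a_{3,3}i\bar a_{3,3}=0$ on the \emph{last column} of $g$, and derives a contradiction with the normalization $|a_{1,3}|^2+|a_{2,3}|^2-|a_{3,3}|^2=-1$ by a short positivity argument (after dividing through by $a_{3,3}$). In your notation that condition is exactly $P_3=Q_3=-\tfrac12$ together with $S_3=0$, and it is already contradictory by itself: $|A_{1,3}B_{1,3}+A_{2,3}B_{2,3}|\le\sqrt{|A_{3,3}|^2-\tfrac12}\sqrt{|B_{3,3}|^2-\tfrac12}<|A_{3,3}B_{3,3}|$, so $S_3\ne 0$. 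Thus the second half of your argument --- the off-diagonal generators, the identities $A^*I_{2,1}A=B^*I_{2,1}B=\tfrac12 I_{2,1}$ and $A^TI_{2,1}B=0$, and the invertibility endgame --- is logically unnecessary, although I checked that it does go through: the two exchange generators at each index pair, combined with the Hermitian relations of $g$ and the Hermitian symmetry of $A^*I_{2,1}A$ and $B^*I_{2,1}B$, do force all the off-diagonal pairings to vanish, so your contradiction ($A$ invertible, hence $B=0$, contradicting $B^*I_{2,1}B=\tfrac12 I_{2,1}$) is valid. What your approach buys is a cleaner structural picture of what reality of all traces would mean for $g$ (a simultaneous ``splitting'' of the $\mathbb C$- and $j$-parts of $g$ into two half-weight isometries of the Hermitian form, coupled by a vanishing bilinear pairing); what it costs is that the decisive step is only sketched, and the computation is spread over all nine entries of $g$ rather than concentrated, as in the paper, on the one column where the restricted Hermitian form is negative definite and the contradiction is immediate.
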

\begin{proof}
Assume that the set of traces of $g \mathrm{SU}(2,1) g^{-1}$ is contained in $\mathbb R$ for some $g\in \mathrm{Sp}(2,1)$. 
Let us stick to the notation for the matrix form of $g$ used in Lemma \ref{sp11}. 
For any unit complex numbers $z$, $w \in \mathbb C$, note that 
$$\left[ \begin{array}{ccc} z & 0&0\\0&w&0\\0&0&\bar z\bar w \end{array} \right] \in \mathrm{SU}(2,1).$$ 
By the assumption, we have
\begin{eqnarray*} \lefteqn{tr \left( \left[ \begin{array}{rrr} a_{1,1}&a_{1,2}&a_{1,3} \\ a_{2,1}&a_{2,2}&a_{2,3} \\ a_{3,1}&a_{3,2}&a_{3,3} \end{array}\right] \left[ \begin{array}{rrr} z & 0&0\\0&w&0\\0&0&\bar z\bar w \end{array}\right] \left[ \begin{array}{rrr} \bar a_{1,1}& \bar a_{2,1}& -\bar a_{3,1} \\ \bar a_{1,2}& \bar a_{2,2}& -\bar a_{3,2} \\ -\bar a_{1,3}& -\bar a_{2,3}& \bar a_{3,3} \end{array}\right] \right) } \\
&=& (a_{1,1} z \bar a_{1,1} + a_{2,1} z \bar a_{2,1}-a_{3,1} z \bar a_{3,1})+  (a_{1,2} w \bar a_{1,2} + a_{2,2} w \bar a_{2,2}-a_{3,2} w \bar a_{3,2}) \\ & & -(a_{1,3} \bar z\bar w \bar a_{1,3} + a_{2,3} \bar z\bar w \bar a_{2,3}-a_{3,3} \bar z\bar w \bar a_{3,3})
\end{eqnarray*}
is a real number for all $z$, $w \in \mathbb C$ with $|z|=|w|=1$.

Taking $(z,w)=(1,i)$, $(i,1)$ and $(i,i)$, we obtain sequentially the following conditions:
\begin{equation*}
     \begin{aligned}
& (a_{1,2} i \bar a_{1,2} + a_{2,2} i \bar a_{2,2}-a_{3,2} i \bar a_{3,2}) + (a_{1,3} i \bar a_{1,3} + a_{2,3} i \bar a_{2,3}-a_{3,3} i\bar a_{3,3}) \in \mathbb R, \\
& (a_{1,1} i \bar a_{1,1} + a_{2,1} i \bar a_{2,1}-a_{3,1} i \bar a_{3,1}) + (a_{1,3} i \bar a_{1,3} + a_{2,3} i \bar a_{2,3}-a_{3,3} i\bar a_{3,3}) \in \mathbb R, \\
& (a_{1,1} i \bar a_{1,1} + a_{2,1} i \bar a_{2,1}-a_{3,1} i \bar a_{3,1}) + (a_{1,2} i \bar a_{1,2} + a_{2,2} i \bar a_{2,2}-a_{3,2} i\bar a_{3,2}) \in \mathbb R.
     \end{aligned}
\end{equation*}
This implies that
\begin{equation}\label{real}
     \begin{aligned}
& (a_{1,1} i \bar a_{1,1} + a_{2,1} i \bar a_{2,1}-a_{3,1} i \bar a_{3,1}) \in \mathbb R, \\
& (a_{1,2} i \bar a_{1,2} + a_{2,2} i \bar a_{2,2}-a_{3,2} i \bar a_{3,2}) \in \mathbb R, \\
& (a_{1,3} i \bar a_{1,3} + a_{2,3} i \bar a_{2,3}-a_{3,3} i \bar a_{3,3}) \in \mathbb R.
     \end{aligned}
\end{equation}

It is not difficult to check that the real part of $a_{1,1} i \bar a_{1,1} + a_{2,1} i \bar a_{2,1}-a_{3,1} i \bar a_{3,1}$ is $0$ and hence, $a_{1,1} i \bar a_{1,1} + a_{2,1} i \bar a_{2,1}-a_{3,1} i \bar a_{3,1}=0$. Similarly, the other terms in (\ref{real}) also equal to $0$.
Since $|a_{1,1}|^2+|a_{2,1}|^2 - |a_{3,1}|^2=|a_{1,2}|^2+|a_{2,2}|^2 - |a_{3,2}|^2=1$ and $|a_{1,3}|^2+|a_{2,3}|^2 - |a_{3,3}|^2=-1$, one can notice that $(a_{1,3}, a_{2,3}, a_{3,3}) \in \mathbb H^3$ is a solution of the following equation system for quaternion numbers.
\begin{equation}\label{eqnsys}
\left\{\begin{aligned}
& xi \bar x +y i \bar y - z i \bar z=0 \\
& |x|^2+|y|^2-|z|^2=-1
     \end{aligned} \right.
\end{equation}
However we will show that the solution for (\ref{eqnsys}) does not exist. To show this, first observe that $z\neq 0$ and so,
$$\bar z x i \bar x z + \bar z y i \bar y z - |z|^4 i=0.$$
Put $$t=\frac{\bar z x}{|z|^2}, \ s=\frac{\bar z y}{|z|^2}.$$
Then the equation system (\ref{eqnsys}) is reformulated to 
\begin{equation}\label{eqnsys2}
\left\{\begin{aligned}
& t i \bar t +s i \bar s - i=0 \\
& |z|^2(|t|^2+|s|^2-1)=-1
     \end{aligned} \right.
\end{equation}
Let $t=t_1+t_2i+t_3j+t_4k$ and $s=s_1+s_2i+s_3j+s_4k$. From the first equation in (\ref{eqnsys2}), we have 
$$(t_1^2+t_2^2-t_3^2-t_4^2)+(s_1^2+s_2^2-s_3^2-s_4^2)=1.$$
The second equation in (\ref{eqnsys2}) implies $|t|^2+|s|^2-1<0$. Then
\begin{eqnarray*}
0 &>&|t|^2+|s|^2-1 \\
&=& (t_1^2+t_2^2+t_3^2+t_4^2)+(s_1^2+s_2^2+s_3^2+s_4^2)-1 \\
&=& (t_1^2+t_2^2+s_1^2+s_2^2)+(t_3^2+t_4^2+s_3^2+s_4^2)-1 \\
&=& (t_3^2+t_4^2+s_3^2+s_4^2)+1+(t_3^2+t_4^2+s_3^2+s_4^2)-1 \\
&=& 2(t_3^2+t_4^2+s_3^2+s_4^2)
\end{eqnarray*}
This is impossible. Thus there does not exist the solution for (\ref{eqnsys}). Finally we can conclude that for any $g\in \mathrm{Sp}(2,1)$, the set of traces of $g \mathrm{SU}(2,1) g^{-1}$ is not contained in $\mathbb R$, which completes the proof.
\end{proof}

Now the cases of $\mathrm{SO}(2,1)$ and $1 \oplus \suone$ remain. In either case, every element has real trace. Hence it is possible that $L_{nc}$ is conjugate to either $\mathrm{SO}(2,1)$ or $1 \oplus \suone$. 

\begin{theorem}\label{sp21thm}
Let $\Gamma$ be a nonelementary discrete subgroup of $\mathrm{Sp}(2,1)$ with real trace field. Then $\Gamma$ stabilizes a totally geodesic submanifold in $\mathbf{H}_{\mathbb H}^2$ which is isometric to either $\mathbf{H}_{\mathbb R}^2$ or $\mathbf{H}_{\mathbb C}^1$.
\end{theorem}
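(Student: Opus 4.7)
The plan is to apply Lemma \ref{qualemma1} to the Zariski closure of $\Gamma$, identify the possible symmetric spaces associated with the noncompact semisimple factor of $\overline{\Gamma}^\circ$, and then lift stabilization of the relevant totally geodesic submanifold from $\overline{\Gamma}^\circ$ up to all of $\overline{\Gamma}$, and hence to $\Gamma$. This mirrors the strategy used in Theorem \ref{realtrace}, with the new feature that most of the case analysis needed to pin down $L_{nc}$ has already been done in the two lemmas immediately preceding the statement.

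First I would invoke the $\mathrm{Sp}$-analog of Corollary \ref{Zreal} to ensure every element of $\overline{\Gamma}$ has real trace, and then use Lemma \ref{qualemma1} to write $\overline{\Gamma}^\circ = LT$, where $L$ is a connected real rank one semisimple subgroup with noncompact factor $L_{nc}$ and $T$ is a torus centralizing $L$. Let $Y$ be the symmetric space associated with $L_{nc}$; this is a totally geodesic submanifold of $\mathbf{H}_{\mathbb H}^2$, so by the classification recalled in the Preliminaries, $Y$ is isometric to one of $\mathbf{H}_{\mathbb R}^k$ for $1\leq k\leq 2$, $\mathbf{H}_{\mathbb C}^k$ for $1\leq k\leq 2$, or $\mathbf{H}_{\mathbb H}^k$ for $k\in\{1,2\}$. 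Correspondingly, up to conjugation in $\mathrm{Sp}(2,1)$ the possibilities for $L_{nc}$ are $\mathrm{SO}(1,1)$, $\mathrm{SO}(2,1)$, $1\oplus\mathrm{SU}(1,1)$, $\mathrm{SU}(2,1)$, $1\oplus\mathrm{Sp}(1,1)$, and $\mathrm{Sp}(2,1)$.

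Next I would rule out the unwanted possibilities one by one. The case $L_{nc}$ conjugate to $\mathrm{SO}(1,1)$ makes $\overline{\Gamma}^\circ$ amenable, contradicting nonelementarity of $\Gamma$. The case $L_{nc}$ conjugate to $\mathrm{Sp}(2,1)$ forces $L_{nc}=\mathrm{Sp}(2,1)$, yet the explicit diagonal element with entries $i$ and $j$ already displayed in Section \ref{sec:sp11} has non-real trace, contradicting that $L_{nc}\subset\overline{\Gamma}$ has real traces. The remaining unwanted cases, $\mathrm{SU}(2,1)$ and $1\oplus\mathrm{Sp}(1,1)$, are eliminated precisely by the two lemmas preceding this theorem. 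Hence $L_{nc}$ is conjugate to $\mathrm{SO}(2,1)$ or to $1\oplus\mathrm{SU}(1,1)$, and $Y$ is isometric to $\mathbf{H}_{\mathbb R}^2$ or $\mathbf{H}_{\mathbb C}^1$.

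Finally I would promote stabilization of $Y$ from $\overline{\Gamma}^\circ$ to $\overline{\Gamma}$. Because $T$ centralizes $L_{nc}$ and $L_{nc}$ acts on $Y$ as (a finite cover of) its full isometry group, $T$ preserves $Y$, so $\overline{\Gamma}^\circ=LT$ stabilizes $Y$. Writing $\overline{\Gamma}=\bigcup_{k=1}^{r}\gamma_k\overline{\Gamma}^\circ$ with $\overline{\Gamma}^\circ$ normal in $\overline{\Gamma}$, the identity $\gamma_k\overline{\Gamma}^\circ\gamma_k^{-1}=\overline{\Gamma}^\circ$ forces $\overline{\Gamma}^\circ$ to stabilize each $\gamma_k(Y)$. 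Since a rank one noncompact semisimple subgroup of $\mathrm{Sp}(2,1)$ cannot stabilize two distinct totally geodesic submanifolds of its symmetric-space type, $\gamma_k(Y)=Y$ for every $k$, so $\overline{\Gamma}$, and in particular $\Gamma$, stabilizes $Y$. The main obstacle is concentrated in the two preceding quaternionic arithmetic lemmas that exclude $\mathrm{SU}(2,1)$ and $1\oplus\mathrm{Sp}(1,1)$; once these are granted, the classification step and the component-lifting step are straightforward adaptations of the arguments from Section 3.
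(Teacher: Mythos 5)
Your proposal is correct and follows essentially the same route as the paper: it reduces to Lemma \ref{qualemma1}, uses the two preceding lemmas (together with the trivial exclusions of $\mathrm{SO}(1,1)$ by amenability and $\mathrm{Sp}(2,1)$ by an explicit non-real trace) to pin $L_{nc}$ down to $\mathrm{SO}(2,1)$ or $1\oplus\mathrm{SU}(1,1)$, and then lifts stabilization of $Y$ from $\overline{\Gamma}^\circ$ to $\overline{\Gamma}$ exactly as in Theorem \ref{realtrace}. You in fact spell out more of the details (the full list of candidate $L_{nc}$, the uniqueness-of-$Y$ argument for the components) than the paper's own very terse proof does.
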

\begin{proof}
As seen above, $L_{nc}$ is conjugate to either $\mathrm{SO}(2,1)$ or $1 \oplus \suone$. Then $L_{nc}$ stabilizes a totally geodesic submanifold in $\mathbf{H}_{\mathbb H}^2$ that is isometric to either $\mathbf{H}_{\mathbb R}^2$ or $\mathbf{H}_{\mathbb C}^1$. Hence by a similar proof as in Theorem \ref{realtrace}, the Zariski closure $\overline \Gamma$ of $\Gamma$ also stabilizes the totally geodesic submanifold. It completes the proof.
\end{proof}

\begin{remark}
In \cite{Kim13}, Kim assumed that $\Gamma$ contains a loxodromic element fixing $0$ and $\infty$. If the trace is invariant under conjugation, Kim's result is sufficient to apply his result to arbitrary nonelemenary quaternionic hyperbolic Kleinian groups. However, trace is not invariant under conjugation in $\Sp$ and thus, it is not easy to apply the method in \cite{Kim13} to arbitrary nonelementary quaternionic hyperbolic Kleinian groups even in the case of $\mathrm{Sp}(2,1)$.
In Theorem \ref{sp21thm}, we do not assume that $\Gamma$ contains a loxodromic element fixing $0$ and $\infty$. This is one advantage of our approach against the approach in \cite{Kim13} in characterizing nonelementary discrete subgroups of $\Sp$ with real trace fields.
Another advantage of our approach is that it makes it possible to deal this problem with the general case of $\Sp$. 
\end{remark}

\subsection{General case}

Recall that by the same argument as in the $\su$ case, the identity component $\overline \Gamma^\circ$ of the Zariski closure of $\Gamma$ is decomposed into $\overline \Gamma^\circ=LT$ as in Lemma \ref{qualemma1}. 
Then $L_{nc}$ is conjugate to either $I_{n-m}\oplus\mathrm{Sp}(m,1)$, $I_{n-m}\oplus\mathrm{SU}(m,1)$ or $I_{n-m}\oplus\mathrm{SO}(m,1)$ for some $m\geq 1$. Furthermore it is required that every element of $L_{nc}$ has real trace. Hence one can expect that the possible Lie group among them for $L$ is conjugate to either $I_{n-m}\oplus\mathrm{SO}(m,1)$ or $I_{n-1}\oplus\mathrm{SU}(1,1)$ like the case of $\su$ because the set of traces for the other Lie groups is not contained in $\mathbb R$. However, it seems not easy to check whether the set of traces is not contained in $\mathbb R$ or not for all groups conjugate to the other Lie groups. Forturnately, we find two criteria for $L_{nc}$.

Let $d_n$ denote a diagonal matrix of size $n+1$ with ordered diagonal entries, $1,\ldots,1, i=\sqrt{-1}$.
We obtain the first criterion for $L_{nc}$ as follows.

\begin{proposition}[Criterion I]\label{dnprop} Let $G$ be a subgroup of $\Sp$ containing $d_n$. Then for any $g\in \Sp$, the set of traces of $g G g^{-1}$ is not contained in $\mathbb R$. 
\end{proposition}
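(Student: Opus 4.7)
My plan is to take the specific element $d_n \in G$ and show directly that $\mathrm{tr}(g d_n g^{-1})$ is never real for any $g \in \Sp$; this suffices since $g d_n g^{-1} \in g G g^{-1}$.

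The first step is to exploit the rank-one perturbation structure $d_n = I_{n+1} + (i-1)\, e_{n+1,n+1}$, where $e_{n+1,n+1}$ is the matrix unit at position $(n+1,n+1)$. Since $g e_{n+1,n+1} g^{-1}$ involves only the last column of $g$ and the last row of $g^{-1}$, expanding the diagonal entries gives
$$\mathrm{tr}(g d_n g^{-1}) = (n+1) + \sum_{m=1}^{n+1} c_m (i-1) r_m,$$
where I abbreviate $c_m = g_{m,n+1}$ and $r_m = (g^{-1})_{n+1,m}$.

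Next I would use the Hermitian relation $g^{-1} = I_{n,1} g^* I_{n,1}$, which yields $r_m = -\epsilon_m \overline{c_m}$, with $\epsilon_m = 1$ for $m \le n$ and $\epsilon_{n+1} = -1$. Combined with the identity $\sum_m \epsilon_m |c_m|^2 = \langle g e_{n+1}, g e_{n+1} \rangle = -1$, substitution and bookkeeping leaves
$$\mathrm{tr}(g d_n g^{-1}) = n + c_{n+1}\, i\, \overline{c_{n+1}} - \sum_{m=1}^{n} c_m\, i\, \overline{c_m}.$$

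The key observation is then that for every quaternion $q$, the element $q i \overline{q}$ is purely imaginary (since $\overline{q i \overline{q}} = -q i \overline{q}$) and has norm $|q|^2$. Hence the real part of the trace is already $n$, and reality of the whole expression would force $c_{n+1}\, i\, \overline{c_{n+1}} = \sum_{m=1}^n c_m\, i\, \overline{c_m}$. Taking norms and applying the quaternionic triangle inequality gives $|c_{n+1}|^2 \le \sum_{m=1}^n |c_m|^2$, in direct contradiction with $|c_{n+1}|^2 - \sum_{m=1}^n |c_m|^2 = 1$. The main technical care is that the cyclic property $\mathrm{tr}(AB) = \mathrm{tr}(BA)$ fails in $\mathrm{GL}(n+1,\H)$ (which is precisely why this lemma is needed in the first place), so the computation must be done at the level of entries with the order of quaternionic products preserved throughout.
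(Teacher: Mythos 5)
Your proof is correct and follows essentially the same route as the paper: both reduce to the single element $g d_n g^{-1}$, identify the purely imaginary quantity $c_{n+1}\,i\,\overline{c_{n+1}}-\sum_{m=1}^{n} c_m\, i\, \overline{c_m}$ built from the last column of $g$, force it to vanish, and play that off against the Hermitian relation $\sum_{m=1}^{n}|c_m|^2-|c_{n+1}|^2=-1$. The only divergence is the final contradiction: the paper normalizes by $c_{n+1}$ and expands into sixteen real coordinates, whereas your use of the multiplicativity of the quaternion norm ($|q\,i\,\bar q|=|q|^2$) together with the triangle inequality settles it in one line --- a genuine, if local, simplification of that step.
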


\begin{proof}
Let $a_{i,j}$ denote the $(i,j)$-entry of $g$. Then $g^{-1}$ is written as 
$$g^{-1} = \left[ \begin{array}{cccc} \bar a_{1,1} & \cdots & \bar a_{n,1} & -\bar a_{n+1, 1} \\ \vdots & \ddots & \vdots &\vdots \\ \bar a_{1,n} & \cdots & \bar a_{n,n} & -\bar a_{n+1, n} \\ -\bar a_{1, n+1} & \cdots & - \bar a_{n, n+1} & \bar a_{n+1,n+1} \end{array} \right].$$
To prove the Theorem, it is sufficient to show that $tr(gd_ng^{-1})$ is not real for all $g\in \Sp$.
Assume that $tr(gd_ng^{-1}) \in \mathbb R$ for some $g\in \Sp$.
By a direct computation, $tr(gd_ng^{-1}) \in \mathbb R$ is equivalent to the following condition.
$$a_{1,n+1}i\bar a_{1,n+1}+\cdots + a_{n,n+1}i \bar a_{n,n+1} - a_{n+1,n+1}i\bar a_{n+1,n+1} \in \mathbb R.$$
In fact, since $\overline{qi\bar q}= -q i \bar q$ for any $q \in \mathbb H$, we have 
$$a_{1,n+1}i\bar a_{1,n+1}+\cdots + a_{n,n+1}i \bar a_{n,n+1} - a_{n+1,n+1}i\bar a_{n+1,n+1}=0.$$

From $g^*I_{n,1}g=I_{n,1}$, $$|a_{1,n+1}|^2+\cdots |a_{n,n+1}|^2 - |a_{n+1,n+1}|^2=-1.$$
Hence $(a_{1,n+1},\ldots, a_{n+1,n+1}) \in \mathbb H^{n+1}$ is a solution of the following equation system for quaternion numbers:
\begin{equation}\label{eqnsys_spn}
\left\{\begin{aligned}
& x_1i \bar x_1 + \cdots +x_n i \bar x_n - x_{n+1}i \bar x_{n+1}=0 \\
& |x_1|^2+\cdots+|x_n|^2-|x_{n+1}|^2=-1
     \end{aligned} \right.
\end{equation}
Similarly to the $\mathrm{Sp}(2,1)$ case, we will prove that the solution for (\ref{eqnsys_spn}) does not exist in $\mathbb H^{n+1}$.
From the second equation in (\ref{eqnsys_spn}), it follows that $|x_{n+1}|\geq 1$ and so $x_{n+1} \neq 0$.
Putting $t_m=\frac{\bar x_{n+1} x_m}{|x_{n+1}|^2}$ for each $m=1,\ldots,n$, the equation system (\ref{eqnsys_spn}) is reformulated as follows.
\begin{equation}\label{eqnsys_spn2}
\left\{\begin{aligned}
& t_1i \bar t_1 + \cdots +t_n i \bar t_n - i =0 \\
& |x_{n+1}|^2(|t_1|^2+\cdots+|t_n|^2-1)=-1
     \end{aligned} \right.
\end{equation}

Let $t_m=t_{m,1} + t_{m,2}i +t_{m,3}j+t_{m,4}k$ for $m=1,\ldots, n$. Then the first equation in (\ref{eqnsys_spn2}) gives rise to 
$$\sum_{m=1}^n\left( t_{m,1}^2+t_{m,2}^2 \right) - \sum_{m=1}^n \left( t_{m,3}^2+t_{m,4}^2 \right) =1.$$
At the same time, the second equation in (\ref{eqnsys_spn2}) gives rise to 
$$\sum_{m=1}^n\left( t_{m,1}^2+t_{m,2}^2 \right) + \sum_{m=1}^n \left( t_{m,3}^2+t_{m,4}^2 \right) -1 <0.$$
Thus we finally get $$2\sum_{m=1}^n \left( t_{m,3}^2+t_{m,4}^2 \right) <0.$$
This is impossible. Therefore, there does not exist the solution satisfying (\ref{eqnsys_spn}). This implies that $tr(gd_ng^{-1})$ can not be a real number for any $g\in \Sp$, which completes the proof.
\end{proof}

\begin{corollary}\label{spm1}
Let $m \geq 1$.
Then for any subgroup of $\Sp$ conjugate to $I_{n-m}\oplus\mathrm{Sp}(m,1)$, the set of traces  is not contained in $\mathbb R$.
\end{corollary}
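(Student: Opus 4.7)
The plan is to deduce Corollary \ref{spm1} as a direct application of Proposition \ref{dnprop} (Criterion I). The only thing that needs to be checked is that the diagonal element $d_n=\mathrm{diag}(1,\ldots,1,i)$ of size $n+1$ actually belongs to the subgroup $I_{n-m}\oplus\mathrm{Sp}(m,1)$; once this is in place, Criterion I immediately yields the statement for every conjugate of this subgroup.

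So first I would verify membership. Writing $d_n$ in block form as $I_{n-m}\oplus D$, where $D=\mathrm{diag}(1,\ldots,1,i)$ is of size $m+1$, it suffices to check that $D\in\mathrm{Sp}(m,1)$. A direct computation gives $D^{*}I_{m,1}D=\mathrm{diag}(1,\ldots,1,(-i)(-1)(i))=\mathrm{diag}(1,\ldots,1,-1)=I_{m,1}$, so $D\in\mathrm{Sp}(m,1)$ and hence $d_n\in I_{n-m}\oplus\mathrm{Sp}(m,1)$.

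Now let $H$ be any subgroup of $\Sp$ conjugate to $I_{n-m}\oplus\mathrm{Sp}(m,1)$, say $H=g(I_{n-m}\oplus\mathrm{Sp}(m,1))g^{-1}$ for some $g\in\Sp$. Setting $G=I_{n-m}\oplus\mathrm{Sp}(m,1)$, the subgroup $G$ contains $d_n$ by the previous paragraph, so Proposition \ref{dnprop} applies and tells us that the set of traces of $gGg^{-1}=H$ is not contained in $\mathbb R$. This is exactly what the corollary asserts.

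There is no real obstacle here; the content of the result is already packaged in Criterion I, and the only point requiring verification is the elementary observation that the test element $d_n$ sits inside $I_{n-m}\oplus\mathrm{Sp}(m,1)$. The role of this corollary in the larger argument is to eliminate the possibility that $L_{nc}$ is conjugate to $I_{n-m}\oplus\mathrm{Sp}(m,1)$ for any $m\ge 1$, thereby reducing the list of candidate noncompact factors in the forthcoming general case analysis.
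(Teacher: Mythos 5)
Your proposal is correct and follows exactly the paper's own proof: the paper likewise notes that $d_n \in I_{n-m}\oplus\mathrm{Sp}(m,1)$ and then invokes Proposition \ref{dnprop}. Your explicit verification that $D^{*}I_{m,1}D=I_{m,1}$ merely spells out what the paper labels as ``clearly.''
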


\begin{proof}
Clearly, $d_n \in I_{n-m}\oplus\mathrm{Sp}(m,1)$ for any $1\leq m\leq n$. By Proposition \ref{dnprop}, the Corollary immediately follows. 
\end{proof}

Criterion I does not give any information for $I_{n-m}\oplus \mathrm{SU}(m,1)$ because of $d_n \notin I_{n-m}\oplus \mathrm{SU}(m,1)$ for any $m\geq 1$. We need another criterion. 
Let
$$c_1=\left[ \begin{array}{rrr} 1&0&0 \\ 0&i&0 \\ 0&0&-i \end{array} \right], \ c_2=\left[ \begin{array}{rrr} i&0&0 \\ 0&1&0 \\ 0&0&-i \end{array} \right], \ c_3=\left[ \begin{array}{rrr} i&0&0 \\ 0&-i&0 \\ 0&0&1 \end{array} \right].$$
Note that $c_1$, $c_2$ and $c_3$ all are elements of $\mathrm{SU}(2,1)$. The second criterion is as follows.

\begin{proposition}[Criterion II]\label{c123}
Let $G$ be a subgroup of $\Sp$ containing $I_{n-2}\oplus c_1$, $I_{n-2}\oplus c_2$ and $I_{n-2}\oplus c_3$. Then for any $g\in \Sp$, the set of traces of $gGg^{-1}$ is not contained in $\mathbb R$.
\end{proposition}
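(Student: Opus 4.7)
The plan is to mimic the strategy of Proposition~\ref{dnprop}. Suppose for contradiction that $g\in\Sp$ satisfies $\mathrm{tr}(ghg^{-1})\in\mathbb{R}$ for every $h\in G$. Since $G$ is a subgroup containing $I_{n-2}\oplus c_1$ and $I_{n-2}\oplus c_2$, it also contains the product $I_{n-2}\oplus c_1 c_2 = I_{n-2}\oplus\mathrm{diag}(i,i,-1)$, giving a fourth reality condition in addition to the three coming directly from the $c_k$.

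The key computation is that for any diagonal $D=\mathrm{diag}(d_1,\ldots,d_{n+1})$, the identity $g^{-1}=I_{n,1}\,g^{*}I_{n,1}$ yields
\[
\mathrm{tr}(gDg^{-1})=\sum_{k=1}^{n+1}\epsilon_k\,\Psi_k(d_k),\qquad \Psi_k(q):=\sum_{i=1}^{n+1}\epsilon_i\,a_{i,k}\,q\,\overline{a_{i,k}},
\]
where $a_{i,k}$ denotes the $(i,k)$-entry of $g$ and $\epsilon_j=1$ for $j\le n$, $\epsilon_{n+1}=-1$. From $g^{*}I_{n,1}g=I_{n,1}$ we read off $\Psi_k(1)=\epsilon_k$, while the identity $\overline{aq\bar a}=-aq\bar a$ (valid for pure imaginary $q$) shows that $\Psi_k(q)$ is pure imaginary whenever $q$ is. Evaluating at the four matrices $I_{n-2}\oplus c_1$, $I_{n-2}\oplus c_2$, $I_{n-2}\oplus c_3$, $I_{n-2}\oplus c_1 c_2$, all of which equal the identity in the first $n-2$ diagonal positions, the reality of each trace forces its pure imaginary part to vanish, producing the linear system
\[
\Psi_n(i)+\Psi_{n+1}(i)=0,\;\; \Psi_{n-1}(i)+\Psi_{n+1}(i)=0,\;\; \Psi_{n-1}(i)-\Psi_n(i)=0,\;\; \Psi_{n-1}(i)+\Psi_n(i)=0.
\]

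Adding the last two relations yields $\Psi_{n-1}(i)=0$; the remaining relations then force $\Psi_n(i)=\Psi_{n+1}(i)=0$. The conclusion $\Psi_{n+1}(i)=0$, combined with the $\Sp$-constraint $\sum_j\epsilon_j|a_{j,n+1}|^{2}=\epsilon_{n+1}=-1$ on the last column of $g$, is exactly the equation system~(\ref{eqnsys_spn}) analyzed in the proof of Proposition~\ref{dnprop}, already shown there to admit no solution in $\mathbb{H}^{n+1}$. This contradiction completes the proof. The delicate point is that the three given generators $c_1,c_2,c_3$ alone provide only two independent reality conditions (the relation from $c_3$ is an algebraic consequence of those from $c_1$ and $c_2$), so they are insufficient to force $\Psi_{n+1}(i)=0$; one must exploit the fact that $G$ is a \emph{subgroup} and therefore contains $c_1 c_2$, which delivers the missing independent relation and reduces the problem to the scalar obstruction already established as Criterion~I.
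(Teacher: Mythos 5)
Your proof is correct, and it in fact repairs a genuine flaw in the paper's own argument rather than merely replicating it. The paper asserts that $tr\bigl(g(I_{n-2}\oplus c_3)g^{-1}\bigr)\in\mathbb R$ is equivalent to $\lambda_{n-1}+\lambda_{n}\in\mathbb R$, but the correct computation --- as you carry out via $\epsilon_n\Psi_n(-i)=-\Psi_n(i)$ --- gives $\lambda_{n-1}-\lambda_{n}\in\mathbb R$, which is exactly the difference of the relations coming from $c_1$ and $c_2$ (consistent with $c_3=c_2c_1^{-1}$). A concrete check confirms the paper's slip: for $g=I$ and $n=2$ one has $tr(c_1)=tr(c_2)=tr(c_3)=1\in\mathbb R$ while $\lambda_1+\lambda_2=2i$, so the paper's claimed equivalence fails and its three conditions form only a rank-two system, from which $\lambda_{n-1}=\lambda_n=\lambda_{n+1}=0$ does not follow. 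Your observation that $G$, being a subgroup, also contains $I_{n-2}\oplus c_1c_2=I_{n-2}\oplus\mathrm{diag}(i,i,-1)$, whose trace condition supplies the independent relation $\lambda_{n-1}+\lambda_n=0$, is precisely what is needed: together with $\lambda_{n-1}-\lambda_n=0$ it forces $\lambda_{n-1}=\lambda_n=0$ and hence $\lambda_{n+1}=0$, after which the reduction to the unsolvable system (\ref{eqnsys_spn}) from Proposition \ref{dnprop} proceeds exactly as in the paper. (Since $tr(c_1c_2)=-1+2i$ for $g=I$, the statement of the proposition is still true; only the published derivation needed the extra test element.) All the supporting computations in your write-up --- the formula $tr(gDg^{-1})=\sum_k\epsilon_k\Psi_k(d_k)$ via $g^{-1}=I_{n,1}g^*I_{n,1}$, the normalization $\Psi_k(1)=\epsilon_k$, and the purely imaginary character of $\Psi_k(q)$ for imaginary $q$ --- check out.
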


\begin{proof}
Suppose that there exists a $g\in \Sp$ such that the set of traces of $gGg^{-1}$ is contained in $\mathbb R$. Since  $I_{n-2}\oplus c_1$, $I_{n-2}\oplus c_2$ and $I_{n-2}\oplus c_3$ are elements of $G$, the traces of their conjugates by $g$ are real. By a direct computation, $tr(g (I_{n-2}\oplus c_1)g^{-1}) \in \mathbb R$ is equivalent to
$$\lambda_{n} + \lambda_{n+1} \in \mathbb R$$
where $\lambda_m = a_{1,m}i \bar a_{1,m}+\cdots +a_{n,m}i\bar a_{n,m} - a_{n+1,m}i\bar a_{n+1,m}$ for each $m$. Similarly, the conditions $tr(g (I_{n-2}\oplus c_2)g^{-1}) \in \mathbb R$ and $tr(g (I_{n-2}\oplus c_3)g^{-1}) \in \mathbb R$ are equivalent to $\lambda_{n-1}+\lambda_{n+1} \in \mathbb R$ and $\lambda_{n-1}+\lambda_n \in \mathbb R$ respectively. Thus, $\lambda_{n-1}$, $\lambda_n$ and $\lambda_{n+1}$ are real numbers. Moreover, $\bar \lambda_m = -\lambda_m$ for each $m=n-1,n,n+1$. Hence we have 
$$\lambda_{n-1}=\lambda_n=\lambda_{n+1}=0.$$

Notice that $\lambda_{n+1}=0$ implies that $(a_{1,n+1},\ldots, a_{n+1,n+1}) \in \mathbb H^{n+1}$ is a solution of the equation system (\ref{eqnsys_spn}). However, by the proof of Proposition \ref{dnprop}, there does not exist the solution for (\ref{eqnsys_spn}). Therefore we can conclude that there does not exist $g \in \Sp$ satisfying that the set of traces of $gGg^{-1}$ is contained in $\mathbb R$. 
\end{proof}

\begin{corollary}\label{sum1}
Let $m \geq 2$.
Then for any subgroup of $\Sp$ conjugate to $I_{n-m}\oplus\mathrm{SU}(m,1)$, the set of traces is not contained in $\mathbb R$.
\end{corollary}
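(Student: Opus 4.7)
The plan is to apply Criterion II (Proposition \ref{c123}) directly to the group $G = I_{n-m} \oplus \mathrm{SU}(m,1)$ inside $\Sp$. All that has to be checked is that this $G$ contains the three distinguished matrices $I_{n-2} \oplus c_1$, $I_{n-2} \oplus c_2$, $I_{n-2} \oplus c_3$; once this is verified, Proposition \ref{c123} immediately yields that the set of traces of $gGg^{-1}$ is not contained in $\mathbb R$ for any $g \in \Sp$, which is exactly what the corollary asserts.

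For the verification step I would first confirm that each $c_i$ lies in $\mathrm{SU}(2,1)$: each is a diagonal matrix with unit-modulus complex entries and determinant $1$, and a direct check shows $c_i^* I_{2,1} c_i = I_{2,1}$. Since $m\geq 2$, the embedding $A \mapsto I_{m-2} \oplus A$ sends $\mathrm{SU}(2,1)$ into $\mathrm{SU}(m,1)$ (preserving both the Hermitian form $I_{m,1} = I_{m-2} \oplus I_{2,1}$ and the determinant), so each $I_{m-2} \oplus c_i$ belongs to $\mathrm{SU}(m,1)$. Prepending the identity block $I_{n-m}$ then gives $I_{n-m} \oplus (I_{m-2} \oplus c_i) = I_{n-2} \oplus c_i \in G$, as required. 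The hypothesis $m \geq 2$ is used precisely here — the room needed to embed the three $3\times 3$ matrices $c_i$ into the $\mathrm{SU}(m,1)$ block is unavailable when $m = 1$.

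No further computation is needed: Proposition \ref{c123} applied to this $G$ yields the conclusion at once, in complete parallel with the way Corollary \ref{spm1} is deduced from Criterion I (there the element $d_n$ sits in $I_{n-m} \oplus \mathrm{Sp}(m,1)$ for every $m \geq 1$). I do not expect any genuine obstacle; the only thing to be careful about is making the index bookkeeping of the direct-sum decomposition match between $I_{n-2} \oplus c_i$ and $I_{n-m} \oplus (I_{m-2} \oplus c_i)$, which is a formal identity.
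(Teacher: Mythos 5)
Your proposal is correct and is essentially identical to the paper's own proof: both verify that $I_{n-2}\oplus c_1$, $I_{n-2}\oplus c_2$, $I_{n-2}\oplus c_3$ lie in $I_{n-m}\oplus\mathrm{SU}(m,1)$ via the inclusion $I_{n-2}\oplus\mathrm{SU}(2,1)\subset I_{n-m}\oplus\mathrm{SU}(m,1)$ for $m\geq 2$, and then invoke Proposition \ref{c123}. Your extra check that each $c_i$ preserves $I_{2,1}$ and has determinant $1$ is a harmless elaboration of what the paper takes for granted.
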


\begin{proof}
Note that $I_{n-2}\oplus c_1$, $I_{n-2}\oplus c_2$ and $I_{n-2}\oplus c_3$ are elements of $I_{n-2}\oplus \mathrm{SU}(2,1)$. For $m\geq 2$, since  $I_{n-2}\oplus \mathrm{SU}(2,1) \subset I_{n-m}\oplus \mathrm{SU}(m,1)$, it follows that $I_{n-2}\oplus c_1$, $I_{n-2}\oplus c_2$ and $I_{n-2}\oplus c_3$ are also elements of $I_{n-m}\oplus \mathrm{SU}(m,1)$ for $m\geq 2$.
Hence the Corollary follows from Proposition \ref{c123}.
\end{proof}

Due to Criterion I and II, we can conclude that the possible simple Lie group of rank $1$ for $L_{nc}$ is conjugate to either $I_{n-m}\oplus\mathrm{SO}(m,1)$ for $m\geq 2$ or $I_
{n-1}\oplus\mathrm{SU}(1,1)$. 

\begin{theorem}\label{L2}
Let $\Gamma$ be a nonelementary discrete subgroup of $\Sp$ with real trace field. Then $\Gamma$ is conjugate to a subgroup of $\mathrm{Sp}(n-m)\oplus\mathrm{O}(m,1)$ for $m\geq 2$ or $\mathrm{Sp}(n-1)\oplus\suone$.
\end{theorem}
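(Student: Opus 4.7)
The plan is to mirror the proof of Theorem \ref{realtrace} in the $\su$ setting, using Criteria I and II (Corollaries \ref{spm1} and \ref{sum1}) as the substitute for the conjugation-invariance of trace, which is not available here. Let $\Gamma$ be a nonelementary discrete subgroup of $\Sp$ with real trace field. By Lemma \ref{qualemma1}, every element of the Zariski closure $\overline{\Gamma}$ has real trace, and $\overline{\Gamma}^\circ=LT$, where $L$ is a connected real-rank-one semisimple Lie subgroup and $T$ is a commuting torus. Decomposing $L$ into its compact and noncompact parts $L=L_cL_{nc}$, the group $L_{nc}$ must, up to conjugation in $\Sp$, coincide with one of $I_{n-m}\oplus\mathrm{Sp}(m,1)$, $I_{n-m}\oplus\mathrm{SU}(m,1)$, or $I_{n-m}\oplus\mathrm{SO}(m,1)$ for some $m$.

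I would then prune this list. Corollary \ref{spm1} excludes $\mathrm{Sp}(m,1)$ for every $m$; Corollary \ref{sum1} excludes $\mathrm{SU}(m,1)$ for $m\geq 2$; and $I_{n-1}\oplus\mathrm{SO}(1,1)$ is solvable, contradicting the nonamenability of $L_{nc}$ guaranteed by Lemma \ref{semisimple}. The remaining case $I_{n-1}\oplus\suone$ passes because its traces are already real. Hence $L_{nc}$ is conjugate to $I_{n-m}\oplus\mathrm{SO}(m,1)$ for some $m\geq 2$ or to $I_{n-1}\oplus\suone$, and the associated symmetric subspace $Y$ of $\mathbf H^n_\H$ is $\mathbf H^m_\R$ or $\mathbf H^1_\C$ respectively. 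Since $T$ and $L_c$ centralize $L_{nc}$, both preserve $Y$, so $\overline{\Gamma}^\circ$ preserves $Y$. Normality of $\overline{\Gamma}^\circ$ in $\overline{\Gamma}$, together with uniqueness of the symmetric space of $L_{nc}$, then forces every coset representative of $\overline{\Gamma}^\circ$ in $\overline{\Gamma}$ to preserve $Y$, so $\overline{\Gamma}\subset\mathrm{Stab}_{\Sp}(Y)$.

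The final step is to show that, after a further conjugation, $\mathrm{Stab}_{\Sp}(Y)$ sits inside $\mathrm{Sp}(n-m)\oplus\mathrm{O}(m,1)$ in the real case and inside $\mathrm{Sp}(n-1)\oplus\suone$ in the complex case. In the real case the orthogonal splitting of $\H^{n,1}$ determined by $Y$ forces any stabilizing element into the block form $\gamma=\gamma^c\oplus\gamma^{nc}$ with $\gamma^c\in\mathrm{Sp}(n-m)$ and $\gamma^{nc}\in\mathrm{O}(m,1)$. In the complex case the stabilizer a priori carries both the left $\mathrm{Sp}(1)$ twist identified in Section \ref{sec:sp11} and an overall $\mathrm{U}(1)$ phase on the noncompact block, so I would write each coset representative as $\gamma_k=\gamma_k^c\oplus(q_k\, e^{i\theta_k}\, h_k)$ with $\gamma_k^c\in\mathrm{Sp}(n-1)$, $q_k\in\mathrm{Sp}(1)$, and $h_k\in\suone$; using $I_{n-1}\oplus\suone\subset\overline{\Gamma}^\circ$ and the real-trace requirement on the entire product $\gamma_k\cdot(I_{n-1}\oplus\suone)$, the resulting family of real linear constraints (linear in the trace parameter of $h_k$) would then force $q_k\in\{\pm1\}$ and $e^{i\theta_k}=\pm1$, following the pattern of the last display in the proof of Theorem \ref{realtrace}.

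The main difficulty is the complex case. Because the trace in $\Sp$ is not conjugation-invariant, the extra left $\mathrm{Sp}(1)$ factor coming from (\ref{suone}) must be pinned down in addition to the $\mathrm{U}(1)$ phase handled in the $\su$ analog. The identity $tr((qg)u(qg)^{-1})=|q|^2\,tr(gug^{-1})$ recorded in Section \ref{sec:sp11}, combined with the three real parameters carried by traces of elements of $\suone$, should furnish exactly the system of real linear equations needed to isolate $q_k\in\{\pm1\}$ and $e^{i\theta_k}=\pm1$, after which the theorem follows.
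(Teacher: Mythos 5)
Your argument follows the paper's proof essentially step for step: the same pruning of $L_{nc}$ via Criteria I and II together with nonamenability, the same centralizing/normality argument showing that $\overline{\Gamma}$ preserves the totally geodesic submanifold $Y$, and the same identification of the stabilizer of $Y$ with $\mathrm{Sp}(n-m)\oplus\mathrm{O}(m,1)$ or $\mathrm{Sp}(n-1)\oplus\suone$. The only difference is that you carry out the final stabilizer step more explicitly than the paper (which simply asserts what the stabilizers are), in particular pinning down the left $\mathrm{Sp}(1)$ twist and the $\mathrm{U}(1)$ phase in the $\mathbf H_{\mathbb C}^1$ case via the real-trace condition on $\gamma_k\cdot(I_{n-1}\oplus\suone)$ --- a worthwhile refinement, and the same trace argument also disposes of the scalar $\mathrm{Sp}(1)$ ambiguity in the $\mathbf H_{\mathbb R}^m$ case.
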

\begin{proof}
As mentioned before, $L_{nc}$ is conjugate to either $I_{n-m}\oplus\mathrm{SO}(m,1)$ for $m\geq 2$ or $I_{n-1}\oplus\suone$. 
Then $L$ preserves a totally geodesic submanifold $Y$ of $\mathbf{H}_{\mathbb H}^n$ which is isometric to either $\mathbf{H}_{\mathbb R}^m$ for $m\geq 2$ or $\mathbf{H}_{\mathbb C}^1$. Since $L$ and $T$ centralize each other, $T$ also preserves $Y$ and thus $\overline \Gamma^\circ$ preserves $Y$. Finally $\overline \Gamma$ preserves $Y$ since $\overline \Gamma^\circ$ is a normal subgroup of $\overline \Gamma$. Hence we conclude that $\Gamma$ preserves a totally geodesic submanifold in $\mathbf{H}_{\mathbb H}^n$ which is isometric to $\mathbf{H}_{\mathbb R}^m$ for $m\geq 2$ or $\mathbf{H}_{\mathbb C}^1$. The stabilizer subgroup of $\mathbf H_{\mathbb R}^m$ in $\Sp$ is conjugate to $\mathrm{Sp}(n-m)\oplus\mathrm{O}(m,1)$ and the stabilizer subgroup of $\mathbf{H}_{\mathbb C}^1$ in $\Sp$ is conjugate to $\mathrm{Sp}(n-1)\oplus\suone$. Hence we completes the proof.
\end{proof}

If $\Gamma$ is an irreducible discrete subgroup of $\Sp$ with real trace field, then $\Gamma$ should be conjugate to a Zariski dense subgroup of $\mathrm{O}(n,1)$ by Theorem \ref{L2}. Hence Theorem \ref{thm:1.6} follows for the $\Sp$ case.

\begin{proof}[Proof of Theorem \ref{thm:1.2}]
It follows from Theorem \ref{L2} that $\Gamma$ preserves a totally geodesic submanifold in $\mathbf H_{\mathbb H}^n$ which is isometric to one of $\mathbf{H}_{\mathbb R}^m$ for $m\geq 2$ and $\mathbf{H}_{\mathbb C}^1$. They are totally geodesic submanifolds of constant negative sectional curvature in $\mathbf H_{\mathbb H}^n$ which are not isometric to $\mathbf H_{\mathbb H}^1$. Thus Theorem \ref{thm:1.2} follows immediately.
\end{proof}

\begin{proof}[Proof of Theorem \ref{thm:1.3}]
It is sufficient to prove the theorem in the $\Sp$ case because of $\su \cap \mathrm{O}(n,1)=\mathrm{SO}(n,1)$.
Let $\rho :\Gamma \rightarrow \Sp$ be a discrete faithful representation such that the trace field of $\rho(\Gamma)$ is real. First assume that $\overline{\rho(\Gamma)} < \mathrm{Sp}(n-m) \oplus \mathrm{O}(m,1)$.
Then the representation $\rho$ is decomposed into $\rho_c \oplus \rho_{nc}$ where $\rho_c :\Gamma \rightarrow \mathrm{Sp}(n-m)$ and $\rho_{nc} : \Gamma \rightarrow \mathrm{O}(m,1)$. Since $\mathrm{Sp}(n-m)$ is compact, $\rho_{nc}$ should be a discrete faithful representation in order that $\rho$ is discrete and faithful. Thus we obtain a discrete faithful representation $\rho_{nc} : \Gamma \rightarrow \mathrm{O}(m,1)$. 

In the case that $\overline{\rho(\Gamma)} < \mathrm{Sp}(n-1) \oplus \mathrm{SU}(1,1)$, the projection of $\rho$ onto the $\mathrm{SU}(1,1)$ factor is a discrete faithful representation by a similar reason as the previous case. By composing the Lie group homomorphism from $\mathrm{SU}(1,1)$ to $\mathrm{SO}(2,1)$, we also obtain a discrete faithful representation $\Gamma \rightarrow \mathrm{SO}(2,1)$. Therefore (i) implies (ii). The converse is clear.
\end{proof}

\end{document}